\documentclass[leqno,11pt,a4paper]{amsart}
%-------------------------------------------------------------------------------
\usepackage[full]{textcomp}
\usepackage{newpxtext}
\usepackage{cabin} % sans serif
\usepackage[varqu,varl]{inconsolata} % sans serif typewriter
\usepackage[bigdelims,vvarbb]{newpxmath} % bb from STIX
\usepackage[cal=cm,bb=esstix,bbscaled=1.05]{mathalfa} % mathcal
\usepackage[margin=2.4cm]{geometry}
\usepackage{savesym}

\usepackage{mathabx}
\usepackage[usenames,dvipsnames]{color}
\usepackage{hyperref}
\usepackage{tikz}
\usetikzlibrary{cd}
\usepackage{graphicx}
\usepackage[all,cmtip]{xy}
\usepackage{mleftright}
\usepackage{booktabs}
\usepackage{cite}
\usepackage{enumitem}
\usepackage{mathdots}
\usepackage{eucal}
%-------------------------------------------------------------------------------

\setlist[enumerate]{labelsep=*, leftmargin=1.5pc}
\setlist[enumerate]{label=\normalfont(\roman*), ref=\roman*}
%-------------------------------------------------------------------------------
\newtheorem{thm}{Theorem}[section]
\newtheorem{lemma}[thm]{Lemma}
\newtheorem{cor}[thm]{Corollary}
\newtheorem{prop}[thm]{Proposition}

\newtheorem{conjecture}[thm]{Conjecture}
%-------------------------------------------------------------------------------
\theoremstyle{definition}
\newtheorem{example}[thm]{Example}
\newtheorem{remark}[thm]{Remark}
\newtheorem{definition}[thm]{Definition}
\newtheorem{notation}[thm]{Notation}
\newtheorem{question}[thm]{Question}
\newtheorem{problem}[thm]{Problem}
\newtheorem{convention}[thm]{Convention}
\newtheorem{setup}[thm]{Setup} 
\newtheorem{construction}[thm]{Construction} 
\numberwithin{equation}{section}
%-------------------------------------------------------------------------------
\newcommand{\Z}{\mathbb{Z}}

\newcommand{\R}{\mathbb{R}}
\newcommand{\C}{\mathbb{C}}
\newcommand{\D}{\mathbb{D}}

\newcommand{\vol}[1]{\operatorname{vol}\mleft({#1}\mright)}

\renewcommand{\dim}[1]{\operatorname{dim}\mleft({#1}\mright)}

\renewcommand{\min}[1]{\operatorname{min}\mleft\{{#1}\mright\}}
\renewcommand{\max}[1]{\operatorname{max}\mleft\{{#1}\mright\}}

\newcommand{\on}{\operatorname}

\newcommand{\mat}{\left(\begin{array}}
\newcommand{\tam}{\end{array}\right)}

\newcommand{\Sp}{\on{Sp}(2)}
\newcommand{\Spx}{\on{Sp}_\star(2)}
\newcommand{\twSp}{\widetilde{\on{Sp}}(2)}
\newcommand{\twSpx}{\widetilde{\on{Sp}}_\star(2)}
\newcommand{\twU}{\widetilde{\on{U}}(1)}
\newcommand{\rot}{\on{rot}}
\newcommand{\Ru}{\on{Ru}}
\newcommand{\ru}{\on{ru}}
\newcommand{\dvol}{\on{dvol}}
\newcommand{\area}{\on{area}}
\newcommand{\diam}{\on{diam}}
\newcommand{\sys}{\on{sys}}
\newcommand{\CZ}{\on{CZ}}
\newcommand{\Cal}{\on{Cal}}

\definecolor{dark green}{rgb}{0.0, 0.6, 0.0}

%-------------------------------------------------------------------------------
\graphicspath{{images/}}
%-------------------------------------------------------------------------------
\begin{document}
%-------------------------------------------------------------------------------
\author[J.~Chaidez]{J.~Chaidez}
\address{Department of Mathematics\\University of California at Berkeley\\Berkeley, CA\\94720\\USA}
\email{jchaidez@berkeley.edu}
\author[O.~Edtmair]{O.~Edtmair}
\address{Department of Mathematics\\University of California at Berkeley\\Berkeley, CA\\94720\\USA}
\email{oliver\_edtmair@berkeley.edu}
%-------------------------------------------------------------------------------
\title[3D Convex Contact Forms And The Ruelle Invariant]{3D Convex Contact Forms And The Ruelle Invariant}

\begin{abstract} Let $X \subset \R^4$ be a convex domain with smooth boundary $Y$. We use a relation between the extrinsic curvature of $Y$ and the Ruelle invariant of the Reeb flow on $Y$ to prove that there are constants $C > c > 0$ independent of $Y$ such that
\[c \le \ru(Y) \cdot \sys(Y)^{1/2} \le C\]
Here $\sys(Y)$ is the systolic ratio of $Y$, i.e. the square of the minimal period of a closed Reeb orbit of $Y$ divided by twice the volume of $X$, and $\ru(Y)$ is the volume-normalized Ruelle invariant. We then construct dynamically convex contact forms on $S^3$ that violate this bound using methods of Abbondandolo-Bramham-Hryniewicz-Salom\~{a}o. These are the first examples of dynamically convex contact $3$-spheres that are not strictly contactomorphic to a convex boundary $Y$.
\end{abstract}

\maketitle

\section{Introduction} \label{sec:introduction} A contact manifold $(Y,\xi)$ is an odd dimensional manifold equipped with a hyperplane field $\xi \subset TY$, called the contact structure, that is the kernel of a $1$-form $\alpha$ such that
\[\ker(d\alpha) \subset TY \text{ is rank 1} \qquad\text{and}\qquad \alpha|_{\ker(d\alpha)} > 0\]
A $1$-form satisfying this condition is called a contact form on $(Y,\xi)$. Every contact form comes equipped with a natural Reeb vector field $R$, defined by
\[\alpha(R) = 1 \qquad \iota_Rd\alpha = 0\]
Note that the Reeb vector-field preserves the $1$-form $\alpha$ and the natural volume form $\alpha \wedge d\alpha^{n-1}$, where $\dim{Y} = 2n - 1$. The dynamical properties of Reeb vector fields (e.g. the existence of closed orbits and their properties) are the subject of immense interest in symplectic geometry and dynamical systems.

\vspace{3pt}

Contact manifolds arise naturally as hypersurfaces in symplectic manifolds satisfying a certain stability condition. In fact, Weinstein introduced contact manifolds in \cite{w1979} inspired by the following prototypical example of this phenomenon, due to Rabinowitz \cite{r1979}. 

\begin{example} We say that a domain $X \subset \R^{2n}$ with smooth boundary $Y$ is \emph{star-shaped} if
\[0 \in \on{int}(X) \qquad\text{and}\qquad \partial_r \text{ is transverse to }Y\]
Let $\omega$ and $Z$ denote the standard symplectic form and Liouville vector field on $\R^{2n}$. That is
\begin{equation} \label{eq:standard_omega_Z} \omega = \sum_{i=1}^n dx_i \wedge dy_i \qquad Z = \frac{1}{2} \sum_i x_i \partial_{x_i} + y_i \partial_{y_i} = \frac{1}{2}r\partial_r \end{equation}
Then the restriction $\lambda|_Y$ of the Liouville $1$-form $\lambda = \iota_Z\omega$ is a contact form.\end{example}

\begin{example} The \emph{standard} contact structure $\xi$ on $S^{2n-1} \subset \R^{2n}$ is given by $\xi = \ker(\lambda|_{S^{2n-1}})$.
\end{example}

\noindent Every contact form on the standard contact sphere arises as the pullback of $\lambda$ via a map to a star-shaped boundary $Y$. Indeed, if $\alpha = f \cdot \lambda|_{S^{2n-1}}$ for $f > 0$ is a contact form for $\xi$, then
\[
\alpha = \phi^*\lambda \qquad\text{where}\qquad \phi(\theta) = (f(\theta)^{1/2},\theta) \quad\text{in radial coordinates }(0,\infty)_r \times S^{2n-1}_\theta
\]
Moreover, every star-shaped boundary $Y$ admits such a map from the sphere. Thus, from the perspective of contact geometry, the study of star-shaped boundaries is equivalent to the study of contact forms on the standard contact sphere.

\subsection{Convexity} \label{subsec:convexity} In this paper, we are primarily interested in studying contact forms arising as boundaries of convex domains.

\begin{definition} \label{def:convex_contact_forms} A contact form $\alpha$ on $S^{2n-1}$ is \emph{convex} if there is a convex star-shaped domain $X \subset \R^{2n}$ with boundary $Y$ and a strict contactomorphism $(S^3,\alpha) \simeq (Y,\lambda|_Y)$. \end{definition}

\noindent In contrast to the star-shaped case, not every contact form on $S^{2n-1}$ is convex, and the Reeb flows of convex contact forms possess many special dynamical properties, both proven and conjectural.

\vspace{3pt}

In \cite{v2000}, Viterbo proposed a particularly remarkable systolic inequality for Reeb flows on convex boundaries. To state it, let $(Y,\alpha)$ be a closed contact manifold with contact form of dimension $2n - 1$, and recall that the volume $\vol{Y,\alpha}$ and systolic ratio $\sys(Y,\alpha)$ are given by

\begin{equation}
\vol{Y,\alpha} = \int_Y \alpha \wedge d\alpha^{n-1} \qquad\text{and}\qquad \sys(Y,\alpha) = \frac{\min{\text{period $T$ of an orbit}}^n}{\vol{Y,\alpha}}
\end{equation}

\noindent Note that if $Y$ is the boundary of a star-shaped domain $X\subset\R^{2n}$, then the contact volume of $(Y,\lambda|_Y)$ is related to the volume of $X$ via $\vol{Y,\lambda|_Y}= n!\vol{X}$. The weak Viterbo conjecture that originally appeared in \cite{v2000} can be stated as follows.

\begin{conjecture} \cite{v2000} Let $\alpha$ be a convex contact form on $S^{2n-1}$. Then the systolic ratio is bounded by $1$.
\[\sys(S^{2n-1},\alpha) \le 1\]
\end{conjecture}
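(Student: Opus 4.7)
The plan is to recast the conjecture as a sharp capacity--volume inequality for convex domains and attack it via the dual variational principle, with a fallback route specific to dimension four through the Ruelle invariant.

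First, by Ekeland's variational theory on convex hypersurfaces, the minimal Reeb period $T_{\min}$ on $Y = \partial X$ coincides with the first Ekeland--Hofer capacity $c^{EH}_1(X)$ (and with the first Gutt--Hutchings capacity). Using $\vol{Y,\lambda|_Y} = n!\,\vol{X}$, the conjecture becomes the capacity bound
\[
c^{EH}_1(X)^n \le n! \cdot \vol{X} \qquad \text{for every convex } X \subset \R^{2n}.
\]
An approximation argument reduces the problem to the case where $X$ has smooth, strictly convex boundary and the Reeb flow on $Y$ is nondegenerate, since both sides of the inequality depend continuously on $X$ in a suitable topology on convex bodies.

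Next, I would invoke Clarke's dual action principle to express $c^{EH}_1(X)$ as the minimum of a convex functional determined by the polar gauge of $X$. The task then becomes a sharp upper bound on this minimum in terms of $\vol{X}$. A natural first attempt is to exploit John's ellipsoid $E$ with $E \subset X \subset c_n E$, since equality holds in the weak Viterbo conjecture for ellipsoids: monotonicity and the quadratic scaling of $c^{EH}_1$ under dilation yield $c^{EH}_1(X)^n \le c_n^{2n} \cdot n!\,\vol{E} \le c_n^{2n} \cdot n!\,\vol{X}$, but only with a large dimensional constant $c_n^{2n}$.

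The principal obstacle is replacing this constant with the sharp value $1$. This appears to require either a monotone deformation of $X$ to an ellipsoid along which $c^{EH}_1(X)^n/\vol{X}$ is nonincreasing, or a Brunn--Minkowski-type inequality coupling $c^{EH}_1$ directly to $\vol{\cdot}$; neither is available in general. In the dimension $n=2$ most relevant to this paper, the curvature--Ruelle inequality $\ru(Y) \cdot \sys(Y)^{1/2} \le C$ announced in the abstract suggests a complementary strategy: a uniform lower bound $\ru(Y) \ge C$ on the volume-normalized Ruelle invariant over all convex $Y \subset \R^4$ would immediately imply $\sys(Y) \le 1$. I would therefore pursue such a lower bound via the relation between $\ru(Y)$ and the extrinsic curvature of $Y$, expecting this pointwise curvature-to-dynamical-invariant comparison to be the main technical difficulty.
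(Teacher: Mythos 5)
This statement is an open conjecture, not a theorem; the paper cites it from Viterbo \cite{v2000} without proving it, and it remains unresolved in the literature. Your ``proof'' is therefore necessarily a strategy sketch, and it should be assessed as such.

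The first half of your proposal is sound as far as it goes: the identification of the minimal period with $c^{EH}_1(X)$ via Ekeland's dual variational theory, the reformulation as $c^{EH}_1(X)^n \le n!\,\vol{X}$, and the continuity-based reduction to smooth strictly convex bodies are all standard. You also correctly observe that the John-ellipsoid argument only yields $\sys(Y) \le (2n)^{2n}$ rather than the sharp bound $1$, because monotonicity plus quadratic scaling carries the dimensional constant through. This is precisely the content of Viterbo's non-sharp theorem (cited in the paper with constant $\gamma_2$), and you are right that the ``principal obstacle'' is removing that constant. So far you have accurately diagnosed why elementary tools are insufficient.

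The genuine gap is in your proposed $n=2$ fallback. You claim that a uniform lower bound $\ru(Y) \ge C$, together with the paper's upper bound $\ru(Y)\cdot\sys(Y)^{1/2} \le C$, ``would immediately imply $\sys(Y) \le 1$.'' This requires the two constants to coincide and to be optimal. The paper only proves $c \le \ru(Y)\cdot\sys(Y)^{1/2} \le C$ for \emph{some} unspecified constants $c < C$ (Proposition \ref{prop:Ru_bound}), and a corollary gives $\ru(Y) \ge c'$ for some $c' > 0$. With generic constants these only give $\sys(Y) \le (C/c')^2$, which is again just a version of the non-sharp $\gamma_2$ bound. Moreover, the optimal constants are explicitly \emph{not} determined in the paper (see Remark \ref{rem:improving_main_prop_1}, which defers this to future work), and establishing that the sharp lower bound on $\ru$ equals the sharp upper bound on $\ru\cdot\sys^{1/2}$ would be at least as hard as proving the conjecture directly --- for ellipsoids both quantities approach $2$ only at the round ball, which is exactly the conjectured extremizer. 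Your strategy is thus circular at the crucial step, and the proposal leaves the conjecture exactly where the paper leaves it: open.
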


\noindent There is also a strong Viterbo conjecture (c.f. \cite{ghr2020}), stating that all normalized symplectic capacities are equal on convex domains. For other special properties of convex domains, see \cite{hwz1998,v2000}.

\vspace{3pt}

Despite the plethora of distinctive properties that convex contact forms possess, a characterization of convexity entirely in terms of contact geometry has remained elusive.  

\begin{problem} \label{problem:what_is_convexity} Give an intrinsic characterization of convexity that does not reference a map to $\R^{2n}$.\end{problem}

\subsection{Dynamical Convexity} In the seminal paper \cite{hwz1998}, Hofer-Wysocki-Zehnder provided a candidate answer to Problem \ref{problem:what_is_convexity}. 

\begin{definition}[Def. 3.6, \cite{hwz1998}] \label{def:dynamically_convex} A contact form $\alpha$ on $S^3$ is \emph{dynamically convex} if the Conley-Zehnder index $\CZ(\gamma)$ of any closed Reeb orbit $\gamma$ is greater than or equal to $3$. \end{definition}

The Conley-Zehnder index of a Reeb orbit plays the role of the Morse index in symplectic field theory and other types of Floer homology (see \S \ref{subsec:CZ_index} for a review). Thus, on a naive level, dynamical convexity may be viewed as a type of ``Floer-theoretic'' convexity. If $X$ is a convex domain whose boundary $Y$ has positive definite second fundamental form, then $Y$ is dynamically convex \cite[Thm 3.7]{hwz1998}. Note that this condition is open and dense among convex boundaries.

\vspace{3pt}

In \cite{hwz1998}, Hofer-Wysocki-Zehnder proved that the Reeb flow of a dynamically convex contact form admits a surface of section. In the decades since, dynamical convexity has been used as a key hypothesis in many significant works on Reeb dynamics and other topics in contact and symplectic geometry. See the papers of Hryniewicz \cite{h2014}, Zhou \cite {z2019a,z2019b}, Abreu-Macarini \cite {am2014,am2015}, Ginzburg-G{\"u}rel \cite{gg2016}, Fraunfelder-Van Koert \cite{fvk2018} and Hutchings-Nelson \cite{hn2014} for just a few examples. However, the following question has remained stubbornly open (c.f. \cite[p. 5]{fvk2018}).

\begin{question} \label{qu:dynamical_convex_is_convex} Is every dynamically convex contact form on $S^3$ also convex?
\end{question} 

The recent paper \cite{abhs2} of Abbondandolo-Bramham-Hryniewicz-Salom\~{a}o (ABHS) has suggested that the answer to Question \ref{qu:dynamical_convex_is_convex} should be no. They construct dynamically convex contact forms on $S^3$ with systolic ratio close to $2$. There is substantial evidence for the weak Viterbo conjecture (cf. \cite{ch2020}), and so these contact forms are likely \emph{not} convex. However, this was not proven in \cite{abhs2}.

\vspace{3pt}

Even more recently, Ginzburg-Macarini \cite{gm2020} addressed a version of Question \ref{qu:dynamical_convex_is_convex} in higher dimensions that incorporates the assumption of symmetry under the antipod map $S^{2n-1} \to S^{2n-1}$. Their work did not address the general case of Question \ref{qu:dynamical_convex_is_convex}.

\subsection{Main Result} The main purpose of this paper is to resolve Question \ref{qu:dynamical_convex_is_convex}.

\begin{thm} \label{thm:dynamical_convex_is_not_convex} There exist dynamically convex contact forms $\alpha$ on $S^3$ that are not convex. \end{thm}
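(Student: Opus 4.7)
The strategy is to use the Ruelle--systolic inequality $c \le \ru(Y) \cdot \sys(Y)^{1/2} \le C$ from the other main result of this paper as an obstruction to convexity. Since dynamical convexity imposes only a lower bound on Conley--Zehnder indices and says nothing directly about linearized rotation rates, one should be able to construct dynamically convex contact forms $\alpha_k$ on $S^3$ whose product $\ru(\alpha_k) \cdot \sys(\alpha_k)^{1/2}$ escapes the interval $[c,C]$. Any such $\alpha_k$ is then dynamically convex but, by the inequality, cannot be strictly contactomorphic to a convex boundary.

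To produce the $\alpha_k$ I would adapt the plug-insertion construction of Abbondandolo--Bramham--Hryniewicz--Salom\~{a}o \cite{abhs2}. Their technique starts from a well-understood dynamically convex model, such as the Reeb flow of an irrational ellipsoid, and modifies it inside disjoint tubular neighborhoods of chosen closed Reeb orbits. For the present problem the plugs should be designed to pump extra rotation into the linearized flow along the orbits being plugged. Since the Ruelle invariant is essentially the integral of the linearized rotation number with respect to the contact volume form, iterating or stacking such plugs should produce a sequence $\alpha_k$ with $\ru(\alpha_k) \to \infty$ while the volume and the minimal period (hence $\sys$) remain controlled, eventually forcing $\ru(\alpha_k) \cdot \sys(\alpha_k)^{1/2} > C$.

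The plan then breaks into three steps: (i) design a plug that contributes a uniformly positive amount to $\ru$ per insertion while changing volume and minimal period only by a bounded amount; (ii) prove that the resulting forms $\alpha_k$ remain dynamically convex, meaning every closed Reeb orbit of $\alpha_k$, including iterates and new orbits born inside the plug, satisfies $\CZ \ge 3$; (iii) conclude non-convexity from the inequality.

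The hard part is (ii): maintaining dynamical convexity under the surgery. ABHS already had to perform delicate index computations to violate the systolic inequality without destroying $\CZ \ge 3$, and the extra rotation required here makes the interior spectrum of the plug more sensitive, since orbits of intermediate Conley--Zehnder index tend to proliferate in regions of high twisting. I expect the main technical work to be the design of a plug model whose closed-orbit spectrum contains only orbits with $\CZ \ge 3$ while still producing the cumulative rotation needed to violate the upper bound. If positive-rotation plugs turn out to be incompatible with $\CZ \ge 3$, the alternative is to arrange negative rotation contributions and violate the lower bound $\ru(\alpha_k) \cdot \sys(\alpha_k)^{1/2} \ge c$ instead, which should be subject to analogous but dual considerations.
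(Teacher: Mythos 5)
Your proposal follows exactly the paper's strategy: Theorem~\ref{thm:dynamical_convex_is_not_convex} is obtained immediately by combining the Ruelle--systolic obstruction (Proposition~\ref{prop:main_prop_1}) with the existence of dynamically convex forms that violate it (Proposition~\ref{prop:main_prop_2}), and the latter is produced via ABHS-style constructions. The small differences are cosmetic: you describe the modification as ``plug insertion'' into tubular neighborhoods of orbits of a model ellipsoid flow, while the paper works on the disk side, taking the open book of a Hamiltonian map $\phi = \phi^H \bullet \phi^G$ where $\phi^H$ is a global rotation and $\phi^G$ twists small disjoint disks --- two closely related presentations of the same ABHS idea. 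Your worry about whether positive-rotation plugs can be made compatible with $\CZ \ge 3$, and your fallback of negative rotation violating the lower bound instead, is exactly right: the paper in fact realizes both options (small Ruelle with $R = -2 + 1/\kappa$ violating the lower bound, large Ruelle with $R = \kappa$ violating the upper bound), and the index control in both cases is done by bounding the rotation and action of periodic points directly (Lemma~\ref{lem:special_hamiltonian_periodic_points}) rather than by delicate spectral analysis of a plug interior, which is cleaner because the open book picture gives explicit formulas for $\rho(\gamma)$ and $\mathcal{A}(\gamma)$ in terms of the disk map (Proposition~\ref{prop:open_book}(d)--(e)).
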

\noindent Theorem \ref{thm:dynamical_convex_is_not_convex} is an immediate application of Proposition \ref{prop:main_prop_1} and \ref{prop:main_prop_2}, which we will now describe.

\vspace{3pt}

\subsection{Ruelle Bound} For our first result, recall that any closed contact $3$-manifold $(Y,\xi)$ with contact form $\alpha$ that satisfies $c_1(\xi) = 0$ and $H^1(Y;\Z) = 0$ has an associated \emph{Ruelle invariant} \cite{r1985}
\[\Ru(Y,\alpha) \in \R\]
Roughly speaking, the Ruelle invariant is the integral over $Y$ of a time-averaged rotation number that measures the degree to which different Reeb trajectories twist counter-clockwise around each other (see \S \ref{subsec:Ruelle_invariant} for a detailed review). Our result is stated most elegantly using the quantity
\[\ru(Y,\alpha) = \frac{\Ru(Y,\alpha)}{\vol{Y,\alpha}^{1/2}} \]
This \emph{Ruelle ratio} is invariant under scaling of the contact form, unlike the Ruelle invariant itself.

\vspace{3pt}

In recent work \cite{h2019} motivated by embedded contact homology, Hutchings investigated the Ruelle invariant of toric domains in $\C^2$. In that paper, the Ruelle invariant of the standard ellipsoid $E = E(a,b) \subset \C^2$ with symplectic radii $0 < a \le b$ (see \S \ref{subsec:standard_ellipsoids}) was computed as
\begin{equation} \label{eqn:intro_Ru}
\Ru(E) = a + b\end{equation}
The systolic ratio $\on{sys}(E)$ and contact volume $\on{vol}(\partial E,\lambda|_{\partial E})$ are well-known to be $a/b$ and $ab$ respectively. Thus we have the following relation between the systolic and Ruelle ratios.
\[
\ru(E) = \sys(E)^{1/2} + \sys(E)^{-1/2} \quad\text{and thus}\quad 1 < \ru(E) \cdot \sys(E)^{1/2} = \sys(E) + 1 \le 2
\] 
Our first result may be viewed as a generalization of the estimate on the right to arbitrary convex contact forms on $S^3$. 

\begin{prop}[Prop \ref{prop:Ru_bound}] \label{prop:main_prop_1} There are constants $C > c > 0$ such that, for any convex contact form $\alpha$ on $S^3$, the following inequality holds.
\begin{equation} \label{eqn:Ru_lower_bound_intro} c \le \ru(S^3,\alpha) \cdot \sys(S^3,\alpha)^{1/2} \le C\end{equation}
\end{prop}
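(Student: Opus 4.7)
The plan is to establish $\Ru(Y,\alpha) \cdot T_{\on{min}} \asymp \vol{Y,\alpha}$ (writing $\alpha := \lambda|_Y$ for brevity) by expressing the Ruelle invariant as an integral of an explicit extrinsic-curvature quantity on $Y$ and then comparing it against $T_{\on{min}}$ and $\vol{X}$ using classical convex geometry.

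\textbf{Step 1: curvature formula for $\Ru$.} For $X \subset \R^4$ convex with smooth boundary $Y$, I would compute $\Ru(Y,\alpha)$ in the canonical framing of the contact plane field $\xi = \ker(\alpha)$ induced by the global trivialization $T\R^4 \cong \R^4 \times \C^2$. A direct linearization of the Reeb flow in this framing expresses the pointwise instantaneous rotation rate as an explicit function $\varphi \colon Y \to \R$ built quadratically from the second fundamental form of $Y$ (with respect to the outward Euclidean normal). Averaging $\varphi$ along Reeb trajectories and invoking the definition of $\Ru$ as the integral of the asymptotic rotation number then yields
\[\Ru(Y,\alpha) \;=\; \int_Y \varphi \cdot \alpha \wedge d\alpha.\]
Convexity of $X$ forces the second fundamental form to be positive semidefinite, from which I would extract universal pointwise bounds $c_1 H_2(p) \le \varphi(p) \le c_2 H_2(p)$, where $H_2$ is the second elementary symmetric function of the principal curvatures of $Y$. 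Combining with the identity $\alpha \wedge d\alpha = \tfrac{1}{2}\langle p,\nu\rangle\, d\sigma$ on $Y$ (here $d\sigma$ is Euclidean surface measure, $\nu$ the outward unit normal) and the Minkowski identity $\int_Y H_2 \langle p,\nu\rangle\, d\sigma \propto \int_Y H\, d\sigma$ then gives
\[\Ru(Y,\alpha) \;\asymp\; M(Y) \;:=\; \int_Y H\, d\sigma,\]
the total Euclidean mean curvature of $Y$.

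\textbf{Step 2: comparison with $T_{\on{min}}$.} On convex $X$, $T_{\on{min}}$ coincides with the Ekeland--Hofer--Zehnder capacity $c_{EHZ}(X)$. Combining Minkowski's identities (relating $M(Y)$ to a quermassintegral of $X$) with convex-geometric bounds comparing $c_{EHZ}$ to quermassintegrals on convex bodies in $\R^4$ yields the two-sided estimate $M(Y) \cdot T_{\on{min}} \asymp \vol{X}$. Together with Step 1 and the well-known $\vol{Y,\alpha} = 2\vol{X}$, this gives $\Ru(Y,\alpha) \cdot T_{\on{min}} \asymp \vol{Y,\alpha}$, which is exactly \eqref{eqn:Ru_lower_bound_intro}.

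\textbf{Main obstacle.} The technical heart of the argument is the curvature computation in Step 1: linearizing the Reeb flow in the ambient framing of $\xi$ and isolating a clean pointwise formula for $\varphi$ that is quadratic in the second fundamental form and uniformly comparable to $H_2$ on both sides, so that positive definiteness under convexity gives a definite sign (needed for the lower bound) and Minkowski's identity then reduces $\Ru$ to $M(Y)$ (needed for the upper bound). The subsequent convex-geometric estimate $M(Y) \cdot c_{EHZ}(X) \asymp \vol{X}$ in Step 2 is the next most delicate step, as bridging between a symplectic capacity and the Euclidean quermassintegrals of a convex body requires using convexity of $X$ to rule out elongated degenerations in which these invariants could decouple.
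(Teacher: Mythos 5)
Your overall strategy (curvature formula for $\Ru$, then comparison against $T_{\min}$ and $\vol X$ via convex geometry) is the right high-level framework, and your Step~2 is on target: the paper also passes through $T_{\min}=c_{EHZ}(X)$, John's ellipsoid, and monotonicity of quermassintegrals under inclusion of convex bodies to get the comparison with $\vol X$. The upper bound in the proposition essentially follows your lines as well: positive semidefiniteness of the second fundamental form $S$ yields a pointwise upper bound on the rotation density by the mean curvature $H$, and then $\int_Y H\,d\sigma\cdot T_{\min}\lesssim \vol X$ from the ellipsoid sandwich.

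The gap is in Step~1, at the lower bound. Two issues. First, the Ragazzo--Salom\~ao formula the paper uses expresses the rotation density \emph{linearly} (not quadratically) in $S$; after normalizing out the factor $\langle p,\nu\rangle$ it is a sum of two diagonal entries $S(I\nu,I\nu) + S(e^{2\pi i s}J\nu, e^{2\pi i s}J\nu)$ in a quaternionically adapted frame, so it is not dimensionally comparable to $H_2$. Second and more seriously, there is \emph{no universal pointwise lower bound} of the form $c_1 H(p)\le \varphi(p)$ (or $c_1 H_2(p)\le\varphi(p)$): positive semidefiniteness of $S$ controls $\varphi$ from above by $3H$, but from below it only gives $\varphi\ge \langle Z,\nu\rangle^{-1} S(I\nu,I\nu)/(2\pi)\ge 0$, and $S(I\nu,I\nu)$ can be arbitrarily small at points where the Reeb direction $I\nu$ is nearly flat even when $H$ is of order one. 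Thus $\Ru$ cannot be bounded below by the total mean curvature through any pointwise comparison, and your claim that positive definiteness gives "a definite sign (needed for the lower bound)" only yields $\Ru\ge 0$, not the quantitative estimate needed. This is precisely where the paper has to work: it establishes the Cauchy--Schwarz-type inequality $|\nabla_{I\nu}I\nu|^2\le 3H\cdot S(I\nu,I\nu)$, integrates it against a time average along the Reeb flow to get
\[
\int_Y S(I\nu,I\nu)\,d\sigma \ \gtrsim\ \frac{\on{area}(\Sigma)^2}{\int_Y H\,d\sigma}\cdot \min_{\Sigma}(A_T)^2,
\]
and then bounds the time-averaged acceleration $A_T$ from below geometrically --- crudely by $\diam(X)^{-1}$ in one regime, but by a sharper $a^{-1/2}$ (after a delicate projection analysis within the sandwiching ellipsoid) in the regime $\sys\ll 1$, which is where the naive bound fails to give the correct exponent in $\sys(Y)$. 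Without that dynamical lower bound on $\int_Y S(I\nu,I\nu)\,d\sigma$, the left inequality of the proposition does not follow.
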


\noindent Note that a result of Viterbo \cite[Thm 5.1]{v2000} states that there exists a constant $\gamma_2$ such that $\sys(S^3,\alpha) \le \gamma_2$ for any convex contact form. Thus, Proposition \ref{prop:main_prop_1} also implies that

\begin{cor} There is a constant $c > 0$ such that, for any convex contact form $\alpha$ on $S^3$, we have
\begin{equation} \label{eqn:Ru_lower_bound_intro} c \le \ru(S^3,\alpha) \end{equation}
\end{cor}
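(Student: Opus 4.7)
The corollary is an immediate consequence of the two displayed inputs, so the plan is really just to bookkeep how to combine them cleanly. The plan is to take the lower bound
\[ c \le \ru(S^3,\alpha) \cdot \sys(S^3,\alpha)^{1/2} \]
from Proposition \ref{prop:main_prop_1} and divide through by $\sys(S^3,\alpha)^{1/2}$, which is legitimate because Viterbo's theorem \cite[Thm 5.1]{v2000} guarantees a uniform upper bound $\sys(S^3,\alpha) \le \gamma_2$, independent of the convex contact form $\alpha$. In particular $\sys(S^3,\alpha)^{1/2}$ is a positive real number, so the division is valid and preserves the inequality direction.

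Carrying this out, I would set $c' := c / \gamma_2^{1/2}$, where $c > 0$ is the constant from Proposition \ref{prop:main_prop_1} and $\gamma_2 > 0$ is Viterbo's constant. Then for every convex contact form $\alpha$ on $S^3$,
\[ \ru(S^3,\alpha) \;\ge\; \frac{c}{\sys(S^3,\alpha)^{1/2}} \;\ge\; \frac{c}{\gamma_2^{1/2}} \;=\; c' \;>\; 0, \]
which is the claimed bound (with the constant in the corollary renamed to $c'$).

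There is essentially no obstacle here: the nontrivial content is hidden in Proposition \ref{prop:main_prop_1} (the Ruelle-systolic estimate) and in Viterbo's systolic upper bound, both of which I am allowed to quote. The only thing to double-check is that the two constants $c$ and $\gamma_2$ apply to the same class of convex contact forms (they do, since both are formulated for arbitrary convex contact forms on $S^3$) and that $\sys(S^3,\alpha)^{1/2}$ cannot degenerate to $0$ in a way that would make the division meaningless; positivity of the systolic ratio is automatic since any closed convex hypersurface in $\R^4$ admits at least one closed Reeb orbit of positive period, ensuring $\sys(S^3,\alpha) > 0$.
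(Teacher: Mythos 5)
Your argument is exactly the paper's: the corollary is stated immediately after invoking Viterbo's bound $\sys(S^3,\alpha) \le \gamma_2$, and the intended derivation is precisely to divide the lower bound of Proposition \ref{prop:main_prop_1} by $\sys(S^3,\alpha)^{1/2} \le \gamma_2^{1/2}$. Your bookkeeping, including the remark that the systolic ratio is strictly positive so the division is legitimate, is correct and matches the paper's approach.
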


\noindent It is notable that, even for ellipsoids, the systolic ratio can be arbitrarily close to $0$ and the Ruelle ratio can be arbitrarily close to $\infty$. We have included a helpful visualization of Proposition \ref{prop:main_prop_1} in the $\sys-\ru$ plane in Figure \ref{fig:sys_ru_plot}.

\vspace{3pt}

\begin{figure}[h]
\centering
\caption{A plot of the region of the $\sys-\ru$ plane containing convex contact forms, depicted in light red. The blue arc is the region occupied by ellipsoids, and the green lines represent the $\sys = 1$ bound and the $\sys = \gamma_2$ bound. The Viterbo conjecture states that the region of convex domains with systolic ratio larger than $1$ is empty, and so it is partially shaded in this figure.}
\vspace{5pt}
\includegraphics[width=.8\textwidth]{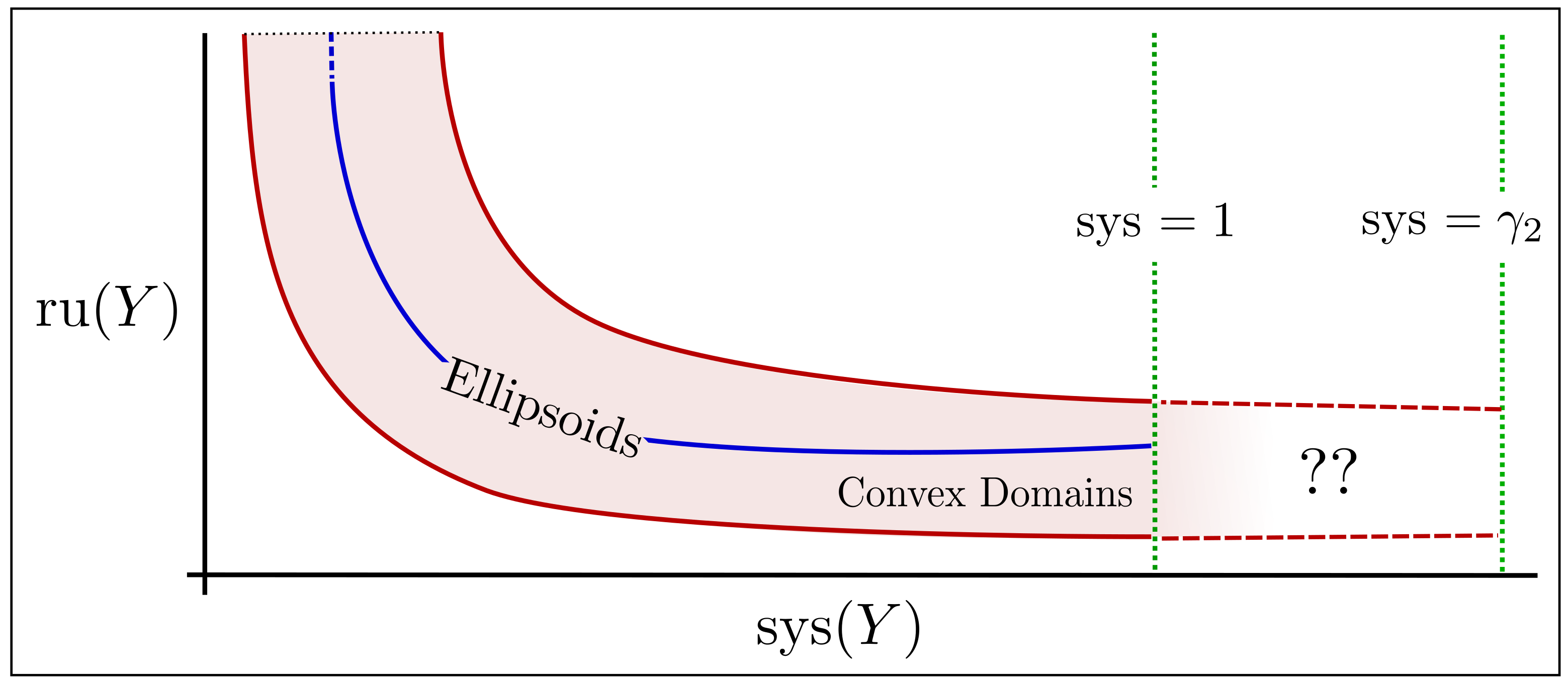}
\label{fig:sys_ru_plot}
\end{figure}

Let us explain the idea of the proof of Proposition \ref{prop:main_prop_1}. First, as explained above, the result holds for ellipsoids. By applying John's ellipsoid theorem \cite{j1948}, we can sandwich a given convex domain $X$ between an ellipsoid $E$ and its scaling $4 \cdot E$. After applying an affine symplectomorphism to $X$ and $E$, we may assume that $E$ is standard. That is
\[E(a,b) \subset X \subset 4 \cdot E(a,b)\]
Note that this symplectomorphism does not change the Ruelle invariant (see \S \ref{subsec:standard_ellipsoids}). Now note that the minimum length of a closed orbit is monotonic under inclusion of convex domains, since it coincides the the Ekeland-Hofer-Zehnder capacity in the convex setting (cf. \cite{chlrs2005}). Applying this and the monotonicity of volume, we find that
\begin{equation} \label{eqn:intro_vol_sys} \frac{ab}{2} \le \vol{X} \le 2^8 \cdot \frac{ab}{2} \qquad\text{and}\qquad 2^{-8} \cdot \frac{a}{b} \le \sys(Y) \le 2^8 \cdot \frac{a}{b}\end{equation}
If the Ruelle invariant were also monotonic, then one could immediately acquire Proposition \ref{prop:main_prop_1} from (\ref{eqn:intro_vol_sys}) and (\ref{eqn:intro_Ru}). Unfortunately, this is not evidently the case. 

\vspace{3pt}

The resolution of this issue comes from a beautiful formula (Proposition \ref{prop:rotation_curvature_formula}) relating the second fundmantal form and local rotation of the Reeb flow on a contact hypersurface $Y$ in $\R^4$. This is due originally to Ragazzo-Salom\~{a}o \cite{rs2006}, albeit in different language from this paper. Using this relation (\S \ref{subsec:curvature_rotation}), we derive estimates for the Ruelle invariant in terms of diameter, area and total mean curvature. By standard convexity theory (i.e. the theory of mixed volumes), these quantities are monotonic under inclusion of convex domains. This allows us to compare the Ruelle invariant of $X$ to that of its sandwiching ellipsoids, and thus prove the result.

\begin{remark}[Enhancing Prop \ref{prop:main_prop_1}] \label{rem:improving_main_prop_1} In future work, we plan to investigate optimal constants $c$ and $C$ for Proposition \ref{prop:main_prop_1}, and to generalize the result to higher dimensions. \end{remark}

\subsection{A Counterexample} In order to prove Theorem \ref{thm:dynamical_convex_is_not_convex} using Proposition \ref{prop:main_prop_1}, we construct dynamically convex contact forms that violate both sides of the estimate (\ref{eqn:Ru_lower_bound_intro}). This is the subject of our second new result. 

\begin{prop}[Prop \ref{prop:dyn_cvx_counterexample}] \label{prop:main_prop_2} For every $\epsilon > 0$, there exists a dynamically convex contact form $\alpha$ on $S^3$ satisfying

\[\vol{S^3,\alpha} = 1 \qquad \sys(S^3,\alpha) \ge 1 - \epsilon \qquad \Ru(S^3,\alpha) \le \epsilon\]
and there exists a dynamically convex contact form $\beta$ on $S^3$ satisfying
\[\vol{S^3,\beta} = 1 \qquad \sys(S^3,\beta) \ge 1 - \epsilon \qquad \Ru(S^3,\beta) \ge \epsilon^{-1}\]
\end{prop}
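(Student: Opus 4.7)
The plan is to adapt the pseudorotation construction of Abbondandolo--Bramham--Hryniewicz--Salom\~{a}o \cite{abhs2}, in which a dynamically convex contact form on $S^3$ is realized as the suspension of an area-preserving disc map $\phi : D^2 \to D^2$ satisfying a prescribed boundary rotation and Conley--Zehnder lower bounds on its interior periodic orbits. Under this correspondence the volume, systolic ratio, and dynamical convexity of the resulting form translate to explicit properties of $\phi$, and ABHS show how to choose $\phi$ so that the systolic ratio is close to $1$ while every closed Reeb orbit has $\CZ \ge 3$.

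The key additional ingredient is that the Ruelle invariant of the suspension admits an explicit expression in terms of $\phi$: up to a universal contribution from the two binding orbits, $\Ru(S^3,\alpha_\phi)$ equals the integrated infinitesimal rotation of $d\phi$ measured in a symplectic trivialization of $TD^2$. This integrated rotation is essentially the Calabi invariant of $\phi$ relative to a reference rotation, and it is highly flexible: composing $\phi$ with a compactly supported Hamiltonian twist inside a small disc $D' \subset \intr{D^2}$ can shift it by any prescribed amount.

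To build $\alpha$, start from an ABHS-type pseudorotation $\phi_0$ realizing volume $1$ and $\sys \ge 1 - \epsilon/2$, compute its Ruelle invariant, and compose with a compactly supported twist of negative Calabi mass sized to bring the total Ruelle invariant below $\epsilon$. To build $\beta$, compose $\phi_0$ instead with a compactly supported twist of positive Calabi mass exceeding $\epsilon^{-1}$. In both cases the twist is supported in a region so small that it perturbs neither the binding periods nor the action of the periodic orbits of $\phi_0$ outside $D'$, so the systolic ratio stays $\ge 1 - \epsilon$ and the volume can be rescaled back to $1$ with negligible effect on the other quantities.

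The main obstacle is that the localized twist must preserve dynamical convexity. For the large-Ruelle case this is the subtler direction: a Hamiltonian twist with large Calabi mass could in principle create short interior periodic orbits of Conley--Zehnder index $1$ or $2$. Following the ABHS strategy, we realize the twist as a slow Hamiltonian shear whose time-$1$ linearization has rotation number uniformly bounded below by $1$ on the support of the perturbation, so that any new periodic orbit automatically has $\CZ \ge 3$. The small-Ruelle case only requires shifting the rotation numbers downward by a controlled amount, and one verifies that none drops below the dynamical convexity threshold. Combining these constructions produces the two families of contact forms claimed in the proposition.
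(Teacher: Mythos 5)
Your proposal identifies the correct general framework (open book suspension of a disc map, translating volume, systolic ratio, and Ruelle invariant into properties of the disc map, then perturbing by a compactly supported twist) and this indeed matches the paper's strategy at a high level. However, there are three genuine gaps.

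First, you conflate the Calabi invariant of the disc map with its Ruelle invariant. The Calabi invariant is the integrated \emph{action}, and under the open book construction (Proposition~\ref{prop:open_book}) it computes the contact \emph{volume}, whereas the Ruelle invariant is the integrated \emph{rotation}. Composing with a local twist of ``Calabi mass'' $m$ supported on a disc of area $a$ shifts the Calabi invariant by roughly $m \cdot a$ and the Ruelle invariant by roughly $m$ (or, parametrizing differently: if the twist rotates by $R$ full turns on a disc of area $a$, the Ruelle shift is $R a$ and the Calabi/volume shift is $R a^2$). This asymmetry --- linear in $a$ for Ruelle, quadratic for Calabi --- is precisely what lets you take $\Ru \to 0$ or $\Ru \to \infty$ while keeping the volume and systolic ratio pinned near their unperturbed values. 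Your write-up does not isolate this mechanism, and the way you describe the twist (``negative Calabi mass sized to bring the Ruelle invariant below $\epsilon$'') suggests the two are being controlled by the same parameter, when in fact they scale differently.

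Second, and more seriously, your treatment of dynamical convexity is not substantiated. Your claim that a single compactly supported twist in a small disc $D'$ can be chosen so that ``any new periodic orbit automatically has $\CZ \ge 3$'' is exactly where the construction can fail. The paper handles this by taking the base map to be a \emph{rational} rotation by $2\pi(1+1/n)$ and arranging the support $U$ of the twist to be a union of small discs that is invariant under rotation by $2\pi/n$; then $\phi^G$ commutes with $\phi^H$, and every periodic orbit other than the central fixed point has period $\ge n$. It is this length lower bound, combined with the bound $r_\phi \ge 1 + 1/n + \min\{0,R\}$ on the time-averaged rotation (Lemma~\ref{lem:rotation_of_phi}) and the constraint $R > -2$, that forces $\rho(\gamma) = \rho(p) + \mathcal{L}(p) > 1$ and hence $\CZ(\gamma) \ge 3$ (Lemma~\ref{lem:special_hamiltonian_periodic_points}). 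In your set-up, a twist with a very large rotation angle supported in a single disc produces short interior periodic orbits for the \emph{twist itself}, and without the multi-disc symmetry there is no period lower bound to compensate. Your ``slow Hamiltonian shear with rotation number bounded below by $1$ on the support'' is not a well-defined prescription that clearly avoids this.

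Third, starting from an ABHS-type \emph{pseudorotation} is in tension with the need for explicit orbit bookkeeping: if the base map has irrational rotation number, composing with a localized twist produces a map whose periodic orbit structure is hard to control, and the twist support is no longer preserved by the return dynamics, so you cannot decouple the analysis of ``new'' orbits from ``old'' ones the way your proof sketch assumes. The paper deliberately uses a rational base rotation precisely so that the twist can be made $n$-fold symmetric and commuting, and so that every orbit's rotation number can be computed exactly.
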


The construction of these examples follows the open book methods of Abbondandolo-Bramham-Hryniewicz-Salom\~{a}o in \cite{abhs1,abhs2}. Namely, we develop a detailed correspondence between the properties of a Hamiltonian disk map $\phi:\D \to \D$ and the properties of a contact form $\alpha$ on $S^3$ constructed using $\phi$ via the open book construction (see Proposition \ref{prop:open_book}). This includes a new formula relating the Ruelle invariant of $\phi$ in the sense of \cite{r1985} and the Ruelle invariant of $(S^3,\alpha)$. We then construct Hamiltonian disk maps $\phi$ with all of the appropriate properties to produce dynamically convex contact forms on $S^3$ satisfying the conditions in Proposition \ref{prop:main_prop_2}.

\vspace{3pt}

Let us briefly outline the construction in the small Ruelle case, as the large Ruelle case is similar. The special Hamiltonian map $\phi$ is acquired by composing two maps $\phi^H$ and $\phi^G$. The map $\phi^H$ is a counter-clockwise rotation by angle $2\pi(1 + 1/n)$ for large $n$. The map $\phi^G$ is compactly supported on a disjoint union $U$ of disks $D$, and rotates (most of) each disk $D$ clockwise about its center by angle slightly less than $4\pi$. See Figure \ref{fig:special_hamiltonian} for an illustration of this map.

\begin{figure}[h]
\centering
\caption{The map $\phi = \phi^G \circ \phi^H$ for $n = 4$. Here $\phi^H$ rotates $\D$ counter-clockwise by 90 degrees and $\phi^G$ twists each disk $D$ by roughly 720 degrees clockwise.}
\vspace{10pt}
\includegraphics[width=.8\textwidth]{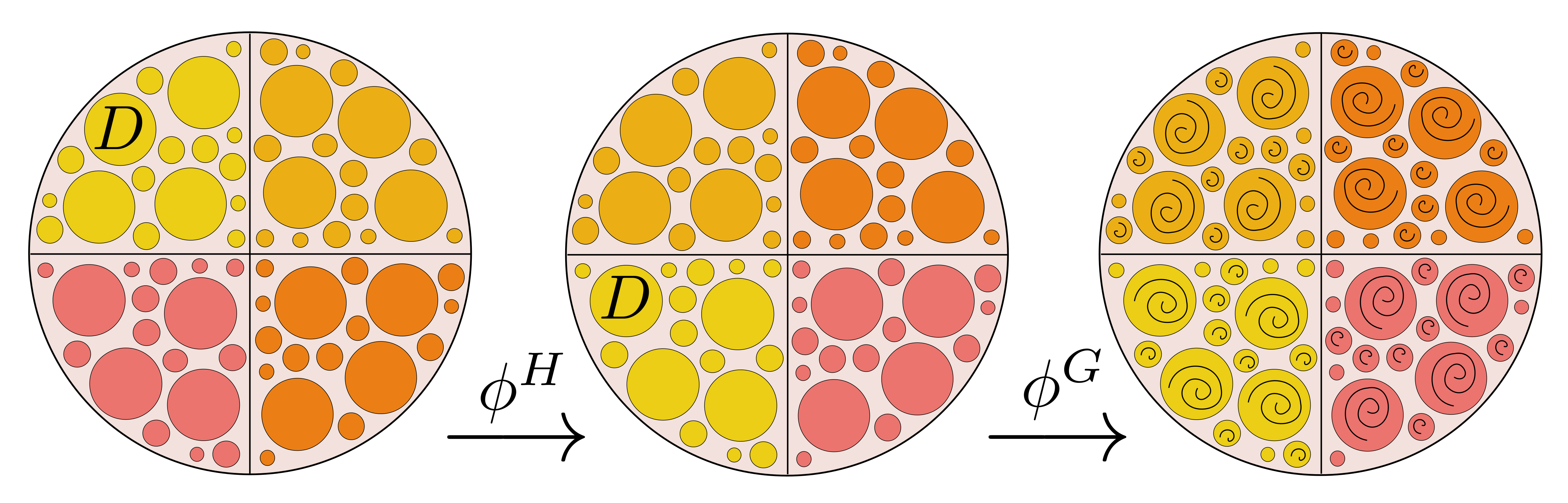}
\label{fig:special_hamiltonian}
\end{figure}

Applying Proposition \ref{prop:open_book}, we can show that the volume and Ruelle invariant of $(S^3,\alpha)$ are (up to negligible error) proportional to the following quantities.
\[\vol{S^3,\alpha} \sim \pi^2 - 2\sum_D \on{area}(D)^2 \qquad \Ru(S^3,\alpha) \sim 2\pi - 2\sum_D \on{area}(D)\]
By choosing $U$ to fill most of $\D$ and choosing all of the disks in $U$ to be very small, we can make the Ruelle invariant very small relative to the volume. This process preserves the minimal action of a closed orbit (up to a small error) and dynamical convexity, producing the desired small Ruelle invariant example.  

\begin{remark} Our examples \emph{do not} coincide with the ABHS examples in \cite{abhs2}. However, we believe that improvements of Proposition \ref{prop:main_prop_2} may make our analysis applicable to those examples.
\end{remark}

\begin{remark} In general, it is possible for the Ruelle invariant of a Reeb flow on $S^3$ to be negative. However, Proposition \ref{prop:main_prop_1} implies (via the lower bound) that the Ruelle invariant of a convex contact form is always positive. In fact, this is a much simpler property to prove than Proposition \ref{prop:main_prop_1} itself, using similar methods. However, we were not able to push the construction in \S \ref{sec:counter_example} to yield a dynamically convex contact form with non-positive Ruelle invariant.
\end{remark}

\subsection*{Outline} This concludes the introduction {\bf \S \ref{sec:introduction}}. The rest of the paper is organized as follows.

\vspace{4pt}

In {\bf \S \ref{sec:rotation_and_ruelle}}, we cover basic preliminaries needed in later sections: the rotation number (\S \ref{subsec:rotation_number}), the Conley-Zehnder index (\S \ref{subsec:CZ_index}), invariants of Reeb orbits (\S \ref{subsec:invariants_of_Reeb_orbits}) and the Ruelle invariant (\S \ref{subsec:Ruelle_invariant}). 

\vspace{4pt}

In {\bf \S \ref{sec:main_estimate}}, we prove Proposition \ref{prop:main_prop_1}. We start by discussing the curvature-rotation formula and some consequences (\S \ref{subsec:curvature_rotation}). We then derive a lower bound for a relevant curvature integral (\S \ref{subsec:lower_bounding_curvature_integral}). We conclude by proving the main bound (\S \ref{subsec:proof_of_main_bound}). 

\vspace{4pt} 

In {\bf \S \ref{sec:counter_example}}, we prove Proposition \ref{prop:main_prop_2}. We first discuss general preliminaries on Hamiltonian disk maps (\S \ref{subsec:hamiltonian_disk_maps}), open books (\S \ref{subsec:open_books}) and radial Hamiltonians (\S \ref{subsec:radial_Hamiltonians}). We then construct a Hamiltonian flow on the disk (\S \ref{subsec:special_hamiltonian_map}) before concluding with the main proof (\S \ref{subsec:main_construction}).

\subsection*{Acknowledgements} We are deeply indepted to Alberto Abbondandolo, Umberto Hryniewicz and Michael Hutchings, who explained a number of the ideas and arguments in \S \ref{sec:rotation_and_ruelle} to JC in various discussions and private communications. We would also like thank the anonymous referees for suggesting that we include counter-examples in \S \ref{sec:counter_example} that violate the upper bound in Proposition \ref{prop:Ru_bound}. JC was supported by the NSF Graduate Research Fellowship under Grant No.~1752814.

\section{Rotation Numbers And Ruelle Invariant} \label{sec:rotation_and_ruelle} In this section, we review some preliminaries on rotation numbers, Conley-Zehnder indices and the Ruelle invariant, which we will need in later parts of the paper.

\begin{remark} The rotation number, also known as the homogeneous Maslov quasimorphism, and the Conley-Zehnder index were originally introduced by Gelfand-Lidskii \cite{gl1955} albeit using different terminology. For a more contemporary perspective, see Salamon-Zehnder \cite{sz1992}. \end{remark}

\subsection{Rotation Number} \label{subsec:rotation_number} Consider the universal cover $\twSp$ of the symplectic group $\Sp$. We will view a group element $\Phi$ as a homotopy class of paths with fixed endpoints
\[\Phi:[0,1] \to \Sp \quad\text{with}\quad \Phi(0) = \on{Id}\]
Recall that a \emph{quasimorphism} $q:G \to \R$ from a group $G$ to the real line is a map such that there exists a $C > 0$ such that
\begin{equation} \label{eqn:quasimorphism_property}
|q(gh) - q(g) - q(h)| < C \qquad\text{for all }g,h \in G
\end{equation}
A quasimorphism is \emph{homogeneous} if $q(g^k) = k \cdot q(g)$ for any $g \in G$. Finally, two quasimorphisms $q$ and $q'$ are called \emph{equivalent} if the function $|q - q'|$ on $G$ is bounded. Note that any quasimorphism is equivalent to a unique homogeneous one. 

\vspace{3pt}

The universal cover of the symplectic group possesses a canonical homogeneous quasimorphism, due to the following result of Salamon-Ben Simon \cite{ss2010}.

\begin{thm}[\cite{ss2010}, Thm 1] \label{thm:quasi_homo_twSp} There exists a unique homogeneous quasimorphism
\[\rho:\twSp \to \R\]
that restricts to the following homomorphism $\rho:\twU \to \R$ on the universal cover of $\on{U}(1)$.
\begin{equation} \label{eqn:quasi_homo_twSp} \rho(\gamma) = L \qquad \text{on the path }\gamma:[0,1] \to \on{U}(1) \text{ with }\gamma(t) = \exp(2\pi i L t)\end{equation}
\end{thm}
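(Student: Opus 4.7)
The plan is to construct $\rho$ explicitly by homogenizing a concrete quasimorphism built from the rotation of a single vector under the path $\Phi$, and then to deduce uniqueness by combining the class-function property of homogeneous quasimorphisms with the structure of conjugacy classes in $\twSp$.

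\textbf{Existence.} Fix a nonzero vector $v \in \R^2$, and for $\Phi \in \twSp$ represented by a path $\Phi \colon [0,1] \to \Sp$ with $\Phi(0) = \on{Id}$, let $\tilde{\theta}_v \colon [0,1] \to \R$ be the continuous lift of $\arg(\Phi(t) v)$ starting at $\arg(v)$, and set $f_v(\Phi) = \tilde{\theta}_v(1)/(2\pi)$. The crucial uniform estimate is
\[
|f_v(\Phi) - f_w(\Phi)| \le 1 \quad \text{for all nonzero } v, w \in \R^2 \text{ and all } \Phi \in \twSp,
\]
which follows because $\Sp$ acts on the circle of directions $\mathbb{RP}^1$ by orientation-preserving M\"obius transformations, so the lifts $\tilde{\theta}_v$ and $\tilde{\theta}_w$ cannot overtake each other by more than one full turn. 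A direct unraveling of the group law shows $f_v(\Phi \Psi) = f_v(\Psi) + f_{\Psi(1) v}(\Phi)$, so this bound makes $f_v$ a quasimorphism of defect at most $1$, and its homogenization $\rho(\Phi) = \lim_{n \to \infty} f_v(\Phi^n)/n$ is a well-defined homogeneous quasimorphism, independent of $v$ by the same uniform estimate. The normalization is then immediate: the path $\gamma(t) = \exp(2\pi i L t)$ rotates every vector uniformly by angle $2\pi L t$, so $f_v(\gamma) = L$ and hence $\rho(\gamma) = L$.

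\textbf{Uniqueness.} If $\rho'$ is another homogeneous quasimorphism with the same restriction, then $\eta = \rho' - \rho$ is a homogeneous quasimorphism vanishing on $\twU$. Every homogeneous quasimorphism is automatically a class function, $\eta(h g h^{-1}) = \eta(g)$, so $\eta$ vanishes on every elliptic conjugacy class of $\twSp$, i.e.\ on every class meeting $\twU$. In particular $\eta(Z) = 0$ for the central element $Z \in \twU$ generating $\pi_1(\Sp)$, so $\eta(\Phi Z^n) = \eta(\Phi) + n \eta(Z) = \eta(\Phi)$ and we may reduce to hyperbolic and parabolic elements of winding zero. To handle those I would invoke the Iwasawa decomposition $\twSp = \twU \cdot A \cdot N$, with $A$ diagonal and $N$ upper unipotent: the subgroup $AN$ is solvable and hence amenable, so $\eta|_{AN}$ is a homomorphism factoring through $AN / [AN,AN] \cong A$. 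Combined with continuity of $\eta$ on a connected Lie group, vanishing on $\twU$, and the uniform defect across the Iwasawa factorization, this forces $\eta \equiv 0$.

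\textbf{Main obstacle.} The delicate point is uniqueness on hyperbolic and parabolic elements: both classes are open in $\Sp$ and disjoint from the elliptic locus, so naive density arguments fail. A conceptually clean resolution goes through bounded cohomology: since $\twSp$ is perfect (hence $\Hom(\twSp, \R) = 0$) and $\dim H^2_b(\twSp, \R) = 1$, the space of homogeneous quasimorphisms on $\twSp$ is one-dimensional, so any two agreeing on a single nontrivial element of $\twU$ must coincide. I expect the bulk of the remaining technical work in a self-contained proof to lie either in setting up this cohomological picture or in carefully completing the Iwasawa-and-continuity argument above.
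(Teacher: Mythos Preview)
The paper does not prove this theorem; it is quoted as Theorem~1 of \cite{ss2010} and used as a black box. What the paper \emph{does} do, in the lemmas immediately following, is develop exactly your existence construction: Definition~2.4 introduces the maps $\rho_s$ (your $f_v$), Lemma~2.6 proves the cocycle identity and the quasimorphism bound with defect $1$, and Lemma~2.5 shows all the $\rho_s$ differ from one another and from the lift $\widetilde{\sigma}$ by at most $1$. So your existence argument is correct and is essentially the paper's own treatment, just reorganized so that the concrete construction comes first and the abstract characterization second.

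Your uniqueness sketch, however, has a real gap that you partially flag but do not close. Knowing that $\eta$ vanishes on $\twU$ and is a homomorphism on the solvable group $AN$ does not by itself force $\eta|_{AN}=0$: the abelianization $AN/[AN,AN]\cong A$ is a copy of $\R$, so a priori $\eta|_A$ could be any multiple of $\log\lambda$. The missing observation is that every hyperbolic element $\tilde a_\lambda\in\twSp$ (the lift of $\operatorname{diag}(\lambda,\lambda^{-1})$) is conjugate in $\twSp$ to its own inverse, via any lift of the rotation by $\pi/2$; hence $\eta(\tilde a_\lambda)=-\eta(\tilde a_\lambda)=0$. Similarly, positive unipotents are all conjugate to one another (via diagonal conjugation), so homogeneity forces $\eta$ to vanish on them as well. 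With these two remarks the class-function argument finishes cleanly and you do not need to invoke $H^2_b$ or continuity of $\eta$ (which is not automatic for an arbitrary homogeneous quasimorphism). The bounded-cohomology route you mention is valid but is considerably heavier machinery than the problem requires.
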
 

\begin{definition} \label{def:rotation_number} The \emph{rotation number} $\rho:\twSp \to \R$ is the quasimorphism in Theorem \ref{thm:quasi_homo_twSp}.\end{definition}

The rotation number is often characterized more explicitly in the literature as a lift of a map to the circle. More precisely, it is characterized as the unique lift 
\begin{equation}
\widetilde{\sigma}:\twSp \to \R \qquad\text{of}\qquad \sigma:\Sp \to S^1 \qquad \text{such that}\qquad \widetilde{\sigma}(\on{Id}) = 0\end{equation}
via the covering map $\R\rightarrow S^1\subset\C$ given by $\theta\mapsto e^{2\pi i \theta}$. Here $\sigma$ is defined as follows. Let $\Phi \in \Sp$ have real eigenvalues $\lambda,\lambda^{-1}$ and let $\Psi \in \Sp$ have complex (unit) eigenvalues $\zeta,\overline{\zeta}$ with $\operatorname{Im}\zeta >0$. Also fix an arbitrary $v \in \R^2 \setminus 0$, identified with an element of $\C$ in the usual way. Then
\begin{equation}\label{eqn:rotation_number_mod_1}
\sigma(\Phi) = 
\left\{\begin{array}{cc}
\vspace{4pt}
0 & \text{ if } \lambda > 0\\
1/2 & \text{ if } \lambda < 0\\
\end{array}\right. \quad\text{and}\quad
\sigma(\Psi) = 
\left\{\begin{array}{cc}
\vspace{4pt}
\zeta & \text{ if } \langle iv,\Phi v\rangle > 0\\
\overline{\zeta} & \text{ if } \langle iv,\Phi v\rangle < 0\\
\end{array}\right. 
\end{equation}
Here $iv$ denotes multiplication of $v$ by $i \in \C$, i.e. the rotation of $v$ by 90 degrees counterclockwise. All of the elements of $\Sp$ fall into one of the two categories above, and so $\sigma$ is determined everywhere by (\ref{eqn:rotation_number_mod_1}).

\begin{lemma} \label{lem:rotation_lift_formula} The rotation number $\rho:\twSp \to \R$ is the lift of $\sigma:\Sp \to \R/\Z$ with $\rho(\operatorname{Id}) = 0$.  
\end{lemma}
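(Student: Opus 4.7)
The plan is to establish $\rho \bmod \Z = \sigma$ as maps $\Sp \to \R/\Z$; once this holds, $\rho$ becomes a continuous lift of $\sigma$ with $\rho(\on{Id}) = 0$ (immediate from $\rho$ being a homogeneous quasimorphism), and uniqueness of continuous lifts to the simply connected cover $\twSp$ then forces $\rho = \widetilde{\sigma}$.

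As a preliminary I would verify that $\sigma$ is continuous on $\Sp$. For elliptic $\Psi = A R_\theta A^{-1}$ with $\theta \in (-\pi, 0) \cup (0, \pi)$, the $\omega$-invariance of $\on{SL}(2,\R)$ gives $\langle iv, \Psi v \rangle = \omega(v, \Psi v) = \sin\theta \cdot |A^{-1} v|^2$, making the sign in (\ref{eqn:rotation_number_mod_1}) $v$-independent with $\sigma(\Psi) = e^{i\theta}$. Continuity within each stratum is then clear, and at the parabolic boundary $\sigma \to 1$ as $\theta \to 0^{\pm}$ matches $\sigma \equiv 0$ on positive hyperbolic, while $\sigma \to -1$ as $\theta \to \pm\pi$ matches $\sigma \equiv 1/2$ on negative hyperbolic.

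Next I would show that $\rho$ descends modulo $\Z$. The element $\widetilde{\gamma} \in \twSp$ represented by the loop $\gamma(t) = \exp(2\pi i t)$ generates the center $\pi_1(\Sp) \cong \Z$ of $\twSp$, and $\rho(\widetilde{\gamma}) = 1$ by (\ref{eqn:quasi_homo_twSp}). Combining centrality of $\widetilde{\gamma}$, homogeneity, and the quasimorphism property of $\rho$ (evaluating $(\widetilde{\gamma} h)^n = \widetilde{\gamma}^n h^n$ via $\rho$ in two ways and dividing by $n$), one deduces $\rho(\widetilde{\gamma}^k h) = k + \rho(h)$ for all $h \in \twSp$ and $k \in \Z$, so $\bar{\rho} := \rho \bmod \Z$ is a well-defined continuous conjugation-invariant function on $\Sp$. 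Direct comparison of (\ref{eqn:quasi_homo_twSp}) and (\ref{eqn:rotation_number_mod_1}) yields $\bar{\rho} = \sigma$ on $\on{U}(1)$, and since every elliptic conjugacy class of $\Sp$ meets $\on{U}(1)$ this extends by conjugation-invariance to the full elliptic stratum. For the hyperbolic strata I would compute $\bar{\rho}$ using polar decomposition $\Phi = R P$ with $R \in \on{SO}(2)$ and $P$ symmetric positive definite: a positive hyperbolic $\Phi$ has $R = \on{Id}$, giving $\bar{\rho}(\Phi) = 0 = \sigma(\Phi)$, while a negative hyperbolic $\Phi$ has $R = -\on{Id} = R_\pi$, giving $\bar{\rho}(\Phi) = 1/2 = \sigma(\Phi)$.

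The main obstacle I anticipate is the polar-decomposition step: justifying $\bar{\rho}(\Phi) = \frac{1}{2\pi} \arg \on{det}_\C R \bmod \Z$ requires verifying that the polar unitary winding itself defines a continuous homogeneous quasimorphism on $\twSp$ restricting correctly to $\twU$, and then invoking the uniqueness clause of Theorem \ref{thm:quasi_homo_twSp}. As a continuity-based workaround, one can use that the positive (resp. negative) hyperbolic conjugacy classes form a simply connected interval with $\on{Id}$ (resp. $-\on{Id}$) in the closure, so continuous $\R$-valued local lifts of $\bar{\rho}$ and $\sigma$ matching at the boundary must agree throughout, forcing $\bar{\rho} = \sigma$ everywhere and completing the proof.
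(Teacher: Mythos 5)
Your strategy runs in the opposite direction from the paper's: the paper constructs the candidate $\widetilde{\sigma}$ directly, proves it is a homogeneous quasimorphism with the required restriction to $\twU$ (via Lemmas~\ref{lem:rhos_all_equivalent} and~\ref{lem:rho_s_is_quasimorphism}), and then appeals to the uniqueness clause of Theorem~\ref{thm:quasi_homo_twSp} to conclude $\rho = \widetilde{\sigma}$. You instead start from $\rho$, reduce it modulo $\Z$, and attempt to identify the reduction with $\sigma$ stratum by stratum. The two routes are genuinely different, and several pieces of yours are fine: the descent of $\rho$ modulo $\Z$ using centrality of the generator of $\pi_1(\Sp)$, homogeneity, and the quasimorphism estimate is correct; conjugation-invariance of $\bar{\rho}$ is a standard consequence of homogeneity; and the computation that $\bar{\rho} = \sigma$ on $\on{U}(1)$ and hence on the elliptic stratum is sound.

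The hyperbolic step is where the argument actually breaks, and neither workaround closes the gap. The continuity argument fails for the following reason: after passing to the quotient by conjugation, both $\bar{\rho}$ and $\sigma$ become continuous $\R/\Z$-valued functions on an interval of conjugacy classes, agreeing at the endpoint $\on{Id}$. But two distinct $\R/\Z$-valued continuous functions on an interval can perfectly well agree at one endpoint; uniqueness of lifts tells you that a \emph{single} circle-valued map has a unique $\R$-valued lift once a basepoint value is fixed, not that two different circle-valued maps with the same boundary value coincide. Concretely, take the one-parameter family $\Phi_s = \on{diag}(e^s, e^{-s})$ and the continuous choice of lifts $\tilde{\Phi}_s \in \twSp$ along paths through positive-eigenvalue matrices; homogeneity gives $\rho(\tilde{\Phi}_{ns}) = n\rho(\tilde{\Phi}_s)$, so $h(s):=\rho(\tilde{\Phi}_s)$ satisfies $h(ns)=nh(s)$ and (if continuous) is linear, $h(s)=cs$. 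Nothing in your argument pins down $c=0$; that is exactly what the bound $|\rho_s - \widetilde{\sigma}| \le 1$ in Lemma~\ref{lem:rhos_all_equivalent} provides in the paper. The polar-decomposition workaround has the same character: establishing that the lifted unitary winding is the homogeneous quasimorphism of Theorem~\ref{thm:quasi_homo_twSp} and computes $\sigma$ on the hyperbolic strata is essentially the content being proved, so it does not reduce the difficulty.

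There is also a secondary unresolved issue: your argument repeatedly uses continuity of $\rho$, but Theorem~\ref{thm:quasi_homo_twSp} as stated only asserts existence and uniqueness of a homogeneous quasimorphism, not its continuity. In the paper this is never needed, because the argument works throughout with $\widetilde{\sigma}$, which is continuous by construction as a lift of the continuous $\sigma$. If you want to keep your direction of argument, you would need to first supply a proof that $\rho$ is continuous, and in this setting the natural such proof is precisely $\rho = \widetilde{\sigma}$, which is circular.
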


\begin{proof} We verify the properties in Theorem \ref{thm:quasi_homo_twSp}. The lift $\widetilde{\sigma}$ is a quasimorphism by Lemmas \ref{lem:rhos_all_equivalent} and \ref{lem:rho_s_is_quasimorphism} below. Also note that since eigenvalues and the sign of $\langle iv,\Phi v\rangle$ are invariant under conjugation, $\sigma$ is as well.

\vspace{3pt}

To check that $\widetilde{\sigma}$ is homogeneous, note that if $\Phi$ has real eigenvalues of sign $s = \pm 1$, then $\sigma(\Phi^k) = \frac{1}{2}(1 - s^k) = ks \mod 1$. On the otherhand, if $\Phi$ has complex unit eigenvalues $\zeta,\bar{\zeta}$ for $\on{Re}(\zeta) > 0$, then it is conjugate to a rotation $\exp(2\pi i \theta) \in U(1) \subset \Sp$ on $\C \simeq \R^2$ and thus
\[\sigma(\Phi^k) = \sigma(\exp(2\pi i k\theta)) = k\theta \mod 1\]
Thus $\sigma(\Phi^k) = k\cdot\sigma(\Phi) \mod 1$ and the lift satisifes $\widetilde{\sigma}(\Phi^k) = k \widetilde{\sigma}(\Phi)$. Finally, if $\gamma:[0,1] \to \Sp$ is given by $\gamma(t) = \exp(2\pi i Lt)$ then
\[\sigma \circ \gamma:[0,1] \to \R/\Z \quad\text{is given by} \quad \sigma \circ \gamma(t) = Lt \mod 1 \in \R/\Z\]
This implies that the lift is $t \mapsto Lt$, so that $\widetilde{\sigma}(\gamma) = L$. This proves the needed criteria.  
\end{proof}

\vspace{3pt}

We will also need to utilize several inhomogeneous versions of the rotation number depending on a choice of unit vector. These are defined a follows.

\begin{definition} \label{def:rotation_number_rel_s} The \emph{rotation number} $\rho_s:\twSp \to \R$ \emph{relative to $s \in S^1$} is the unique lift of the map
\[\sigma_s:\Sp \to S^1 \qquad \Phi \mapsto |\Phi s|^{-1} \cdot \Phi s \in S^1 \subset \R^2\] via the covering map $\R \to S^1 \subset \C$ given by $\theta \mapsto e^{2\pi i \theta}\cdot s$ such that $\rho_s(\on{Id})=0$. Here $\Phi s$ denotes the application of the matrix $\Phi \in \Sp$ to the unit vector $s \in S^1$.
\end{definition}

The rotation numbers relative to $s \in S^1$ and the lift of $\sigma$ all have bounded difference from one another. Precisely, we have the following lemma.

\begin{lemma} \label{lem:rhos_all_equivalent} The maps $\rho_s:\twSp \to \R$ and the lift $\widetilde{\sigma}:\twSp \to \R$ of $\sigma$ have bounded difference. More precisely, we have the following bounds.
\begin{equation} \label{eqn:rhos_all_equivalent}
|\rho_s - \widetilde{\sigma}| \le 1 \qquad\text{and}\qquad |\rho_s - \rho_t| \le 1 \qquad \text{for any pair }s,t \in S^1
\end{equation}
\end{lemma}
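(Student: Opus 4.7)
My plan is to prove the two inequalities in~(\ref{eqn:rhos_all_equivalent}) separately, in each case aiming for the stronger bound $< 1/2$.

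For $|\rho_s - \rho_t| \le 1$ I will use the symplectic invariance of $\omega$. Given $\Phi \in \twSp$ represented by a path $\Phi_\tau:[0,1]\to \Sp$, write the images continuously in polar form, $\Phi_\tau s = r_s(\tau)e^{2\pi i \phi_s(\tau)}$ and $\Phi_\tau t = r_t(\tau)e^{2\pi i \phi_t(\tau)}$, so that $\rho_s(\Phi) = \phi_s(1)-\phi_s(0)$ and $\rho_t(\Phi) = \phi_t(1)-\phi_t(0)$. The identity
\[r_s(\tau)\, r_t(\tau)\sin\!\bigl(2\pi(\phi_t(\tau)-\phi_s(\tau))\bigr) = \omega(\Phi_\tau s,\Phi_\tau t) = \omega(s,t)\]
forces $\sin(2\pi(\phi_t-\phi_s))$ to have constant sign whenever $\omega(s,t)\ne 0$, so continuity traps $\phi_t-\phi_s$ inside a single open interval of the form $(k/2,(k+1)/2)$. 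The net change of $\phi_t-\phi_s$ along $[0,1]$ is therefore strictly smaller than $1/2$ in absolute value, giving $|\rho_s-\rho_t| < 1/2$. The degenerate cases $t = s$ and $t = -s$ are handled directly: in the former the rotation numbers coincide, and in the latter $\Phi_\tau s$ and $\Phi_\tau t$ remain antipodal throughout, so $\rho_s = \rho_t$.

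For $|\rho_s-\widetilde{\sigma}| \le 1$ the plan is to descend the continuous function $f := \rho_s-\widetilde{\sigma}$ on $\twSp$ to a continuous $\bar f:\Sp \to \R$ and then bound $\bar f$ directly. Both $\sigma$ and $\sigma_s$ wind exactly once around $S^1$ along the generating loop $\tau \mapsto R(\tau) := \exp(2\pi i \tau)$ of $\pi_1(\Sp) \cong \Z$---indeed, $\sigma(R(\tau)) = e^{2\pi i \tau}$ by~(\ref{eqn:rotation_number_mod_1}), and $\sigma_s(R(\tau)) = e^{2\pi i \tau} s$. Hence $f$ is invariant under the deck action and descends to $\bar f:\Sp \to \R$ with $\bar f(\on{Id}) = 0$; modulo $\Z$ one has $\bar f(\Phi) \equiv \arg(g(\Phi))/(2\pi)$, where $g(\Phi) := \sigma_s(\Phi)/(s \cdot \sigma(\Phi)) \in S^1$.

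The key technical step is to show that $g:\Sp \to S^1$ never takes the value $-1$. If $\Phi$ is positive or negative hyperbolic, or parabolic, then $g(\Phi)=-1$ would force $s$ to be an eigenvector of $\Phi$ with an eigenvalue of the wrong sign, which contradicts the eigenvalue structure in each case. If $\Phi$ is elliptic, then the definition of $\sigma$ in~(\ref{eqn:rotation_number_mod_1}) amounts to saying that the quadratic form $v \mapsto \omega(v,\Phi v)$ is definite with a sign matching that of $\operatorname{Im}(\sigma(\Phi))$; the equation $g(\Phi)=-1$ rearranges to an expression for $\omega(s,\Phi s)$ of the wrong sign, again a contradiction. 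Once $g$ avoids $-1$, the quantity $\arg(g)/(2\pi)$ takes values in $(-1/2,1/2)$, and by continuity, connectedness of $\Sp$, and the base-point condition $\bar f(\on{Id}) = 0$, the function $\bar f$ itself lies in $(-1/2, 1/2)$ throughout $\Sp$, yielding $|\rho_s-\widetilde{\sigma}| < 1/2 \le 1$. The main obstacle is the elliptic case; once the relationship between the sign condition in the definition of $\sigma$ and the definiteness of $\omega(v,\Phi v)$ is unpacked, the remaining steps are essentially formal.
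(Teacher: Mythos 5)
Your proof is correct, and it takes a genuinely different route from the paper's. The paper argues along paths: it first shows that if $\Phi_\tau$ avoids negative real eigenvalues for all $\tau$, then both $\sigma\circ\Phi$ and $\sigma_s\circ\Phi$ avoid $-1 \in S^1$, so both lifts stay in $(-1/2,1/2)$; it then handles a general path $\Psi$ by peeling off a loop $\gamma(t) = \exp(\pi i k t)$ and reducing to the case $\widetilde{\sigma}\in(-1/2,1/2)$. Your argument is pointwise. For $|\rho_s - \rho_t|$, you exploit the invariance of $\omega(s,t)$ under the symplectic path, which instantly traps $\phi_t - \phi_s$ in an interval of length $1/2$; this is a clean observation the paper does not make, and it yields the sharper bound $|\rho_s - \rho_t| < 1/2$ (with equality $=0$ in the degenerate cases $t=\pm s$). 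For $|\rho_s - \widetilde{\sigma}|$, you descend the difference to a function $\bar f$ on $\Sp$ (after checking that both $\rho_s$ and $\widetilde{\sigma}$ increase by $1$ along the generating loop, so $\rho_s - \widetilde{\sigma}$ is deck-invariant), identify $\bar f$ mod $\Z$ with $\arg(g)/(2\pi)$ for $g(\Phi) = \sigma_s(\Phi)/(s\cdot\sigma(\Phi))$, and use connectedness plus $\bar f(\on{Id}) = 0$ to conclude $\bar f = \arg(g)/(2\pi) \in (-1/2,1/2)$ once $g$ is shown to avoid $-1$. That last step is the real content, and your case analysis is sound: for $\Phi$ with real eigenvalues, $g(\Phi) = -1$ would force $s$ to be an eigenvector with eigenvalue of the wrong sign; for elliptic $\Phi$, the sign clause in~\eqref{eqn:rotation_number_mod_1} says precisely that $v \mapsto \omega(v, \Phi v)$ is definite with sign $\on{sgn}\,\on{Im}\,\sigma(\Phi)$, whereas $g(\Phi) = -1$ forces $\omega(s, \Phi s) = -|\Phi s|\cdot\on{Im}\,\sigma(\Phi)$, a sign contradiction (note $\on{Im}\,\sigma(\Phi)\neq 0$ since $\zeta\neq\pm 1$). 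Your approach buys two self-contained arguments and strictly sharper constants; the paper's path-decomposition is shorter to write and avoids the deck-invariance and connectedness bookkeeping. When you write this up in full, be explicit that $\rho_s(\Phi\gamma) = \rho_s(\Phi)+1$ and $\widetilde{\sigma}(\Phi\gamma) = \widetilde{\sigma}(\Phi)+1$ for the generator $\gamma$ (a short concatenation-of-paths computation), since this is what makes the descent legitimate.
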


\begin{proof} First, assume that $\Phi:[0,1] \to \Sp$ is a path such that $\Phi(t)$ has no negative real eigenvalues for any $t \in [0,1]$. Then
\[\sigma \circ \Phi(t) \neq 1/2 \qquad\text{and}\qquad \sigma_s \circ \Phi(t) \neq -s \in S^1 \qquad\text{ for any }s \in S^1 \text{ and }t \in [0,1]\]
It follows that the relevant lifts of $\sigma \circ \Phi$ and $\sigma_s \circ \Phi$ to maps $[0,1] \to \R$ remain in the interval $(-1/2,1/2)$ for all $t$. Thus
\[\widetilde{\sigma}(\Phi) \in (-1/2,1/2) \qquad\text{and}\qquad \rho_s(\Phi) \in (-1/2,1/2)\]
This clearly implies (\ref{eqn:rhos_all_equivalent}) since $\Phi(t)$ does not have any negative eigenvalues. Since $\sigma$ induces an isomorphism $\pi_1(\Sp) \to \pi_1(S^1)$, we know that for any pair $\Phi,\Phi' \in \twSp$ with the same projection to $\Sp$
\[\widetilde{\sigma}(\Phi) = \widetilde{\sigma}(\Phi') \qquad\text{implies}\qquad \Phi = \Phi'\]
In particular, the above analysis extends to any $\Phi$ with $\widetilde{\sigma}(\Phi) \in (-1/2,1/2)$. In the general case, note that the path $\gamma:[0,1] \to S^1$ given by $\gamma(t) = \exp(\pi i \cdot k t)$ for an integer $k \in \Z$ satisfies
\[\widetilde{\sigma}(\gamma) = \rho_s(\gamma) = k/2 \qquad \widetilde{\sigma}(\Phi\gamma) = \widetilde{\sigma}(\Phi) + \widetilde{\sigma}(\gamma) \qquad \rho_s(\Phi \gamma) = \rho_s(\Phi) + \rho_s(\gamma)\]
Any path $\Psi$ can be decomposed (up to homotopy) as $\Phi \gamma$ where $\gamma$ is as above and $\Phi:[0,1] \to \Sp$ is a path with $\widetilde{\sigma}(\Phi) \in (-1/2,1/2)$. This reduces to the special case. 
\end{proof}

This can be used to demonstrate that $\rho_s$ is a quasimorphism. As noted in the proof of Lemma \ref{lem:rotation_lift_formula}, this implies that $\widetilde{\sigma}$ is a quasimorphism as well.

\begin{lemma} \label{lem:rho_s_is_quasimorphism} The map $\rho_s:\twSp \to \R$ is a quasimorphism for any $s \in S^1$. In fact, we have
\begin{equation} \label{eqn:rho_s_all_equivalent}
|\rho_s(\Psi\Phi) - \rho_s(\Psi) - \rho_s(\Phi)| \le 1 \qquad \text{for any}\qquad s\in S^1\end{equation}
\end{lemma}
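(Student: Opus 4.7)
The plan is to derive the quasimorphism bound for $\rho_s$ directly from the concatenation structure of paths in $\twSp$, and then reduce to the already-proven estimate $|\rho_s - \rho_t| \le 1$ from Lemma \ref{lem:rhos_all_equivalent}.

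First I would recall that a product $\Psi\Phi$ in the universal cover $\twSp$ is represented by the concatenation of the path $\Phi:[0,1]\to\Sp$ with the translated path $t\mapsto \Psi(t)\Phi(1)$. The rotation number $\rho_s(\Psi\Phi)$ is computed by taking the continuous lift of $t\mapsto \sigma_s(\Psi\Phi(t))$ to $\R$ starting at $0$, via the covering $\theta\mapsto e^{2\pi i\theta}s$.

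The key computation is to identify the lift on the second half of the concatenation. Let $v = \Phi(1)s$ and set $s' = v/|v|\in S^1$. By linearity, for any $u\in[0,1]$,
\[
\sigma_s\bigl(\Psi(u)\Phi(1)\bigr) = \frac{\Psi(u)v}{|\Psi(u)v|} = \frac{\Psi(u)s'}{|\Psi(u)s'|} = \sigma_{s'}(\Psi(u)).
\]
At the midpoint of the concatenation the lift has value $\rho_s(\Phi)$, and $\sigma_s(\Phi(1)) = e^{2\pi i \rho_s(\Phi)}s = s'$, which is the basepoint for the covering map defining $\rho_{s'}$. Continuing the lift along the second half therefore adds exactly $\rho_{s'}(\Psi)$, giving the clean identity
\[
\rho_s(\Psi\Phi) = \rho_s(\Phi) + \rho_{s'}(\Psi).
\]

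With this in hand, the quasimorphism bound is immediate: by Lemma \ref{lem:rhos_all_equivalent} we have $|\rho_{s'}(\Psi) - \rho_s(\Psi)|\le 1$ for any $s'\in S^1$, so
\[
|\rho_s(\Psi\Phi) - \rho_s(\Psi) - \rho_s(\Phi)| = |\rho_{s'}(\Psi) - \rho_s(\Psi)| \le 1,
\]
which is precisely \eqref{eqn:rho_s_all_equivalent}. The main (really the only) obstacle is the careful bookkeeping needed to check that concatenation of lifts produces exactly $\rho_s(\Phi) + \rho_{s'}(\Psi)$ with no integer defect; this is what forces us to choose $s'$ as the image unit vector of $s$ under $\Phi(1)$ rather than an arbitrary element of $S^1$, and ensures that the constant $1$ in the bound matches the constant supplied by Lemma \ref{lem:rhos_all_equivalent}.
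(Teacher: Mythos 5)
Your proof is correct and follows essentially the same route as the paper: both establish the exact additivity identity $\rho_s(\Psi\Phi) = \rho_s(\Phi) + \rho_{s'}(\Psi)$ with $s' = \Phi(1)s/|\Phi(1)s|$ by examining the lift of $\sigma_s$ along the concatenated path, and then conclude via the bound $|\rho_{s'}(\Psi)-\rho_s(\Psi)|\le 1$ from Lemma~\ref{lem:rhos_all_equivalent}. The only difference is that you spell out the bookkeeping of the lift on the second half of the concatenation (change of basepoint from $s$ to $s'$), which the paper leaves implicit.
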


\begin{proof} Let $\Phi:[0,1] \to \Sp$ and $\Psi:[0,1] \to \Sp$ be two elements of $\twSp$ viewed as paths in $\Sp$. Consider the product $\Psi\Phi$ in the universal cover of $\Sp$, represented by the path
\[\Phi(2t)\text{ for }t \in [0,1/2] \qquad\text{and}\qquad \Psi(2t-1)\Phi(1) \text{ for }t \in [1/2,1] \]
By examining the path $\sigma_s \circ \Psi\Phi:[0,1] \to S^1$ and the lift to $\R$, we deduce the following property.
\begin{equation}\label{eqn:rho_s_additivity_property} \rho_s(\Psi\Phi) = \rho_{\Phi(s)}(\Psi) + \rho_s(\Phi)\end{equation}
Here $\Phi(s)$ is shorthand for the unit vector $\Phi(1)s/|\Phi(1)s|$. Applying Lemma \ref{lem:rhos_all_equivalent}, we have
\[|\rho_s(\Psi\Phi) - \rho_s(\Psi) - \rho_s(\Phi)| \le |\rho_{\Phi(s)}(\Psi) - \rho_s(\Psi)| \le 1\]
This proves the quasimorphism property.\end{proof}

\subsection{Conley-Zehnder Index} \label{subsec:CZ_index} Let $\Spx \subset \Sp$ denote the subset of elements $\Phi \in \Sp$ such that $\Phi - \on{Id}$ is invertible, and let $\twSpx$ be the inverse image of $\Spx$ under $\pi:\twSp \to \Sp$. 

\vspace{3pt}

The \emph{Conley-Zehnder index} is a natural integer invariant of paths in $\Spx$, denoted as follows.
\[
\CZ:\twSpx \to \Z
\]
This invariant was introduce in \cite{gl1955} (also see \cite{sz1992}). We will use the following formula as our definition throughout this paper.
\begin{equation}\label{eqn:CZ_w_rotation_number_on_Spx}
\CZ(\Phi) = \lfloor \rho(\Phi) \rfloor + \lceil \rho(\Phi) \rceil
\end{equation}
There are several inequivalent ways to extend the Conley-Zehnder index to the entire symplectic group. We will follow \cite[\S 3]{hwz1998} and \cite[\S 2.2]{abhs2}, and use the following extension. 

\begin{convention} \label{con:CZ_on_Sp} In this paper, the \emph{Conley-Zehnder index} $\CZ:\twSp \to \Z$ will be the maximal lower semi-continuous extension of the ordinary Conley-Zehnder index.
\end{convention}

\noindent The extension in Convention \ref{con:CZ_on_Sp} can be bounded from below in terms of the rotation number.

\begin{lemma} \label{lem:CZ_index_3_vs_rho} Let $\Phi \in \twSp$. Then
\begin{equation} \label{eqn:CZ_w_rotation_number_on_Sp}
\CZ(\Phi) \ge 2 \cdot \lceil \rho(\Phi) \rceil - 1
\end{equation}
\end{lemma}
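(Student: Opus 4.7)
The plan is to treat the two cases from the definition of $\CZ$ separately: $\Phi \in \twSpx$, where the explicit formula (\ref{eqn:CZ_w_rotation_number_on_Spx}) applies, and $\Phi \in \twSp \setminus \twSpx$, where $\CZ$ is determined by the maximal lower semi-continuous extension of Convention \ref{con:CZ_on_Sp}.

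For $\Phi \in \twSpx$ I would invoke (\ref{eqn:CZ_w_rotation_number_on_Spx}) and reduce the inequality to the elementary claim $\lfloor x \rfloor \ge \lceil x \rceil - 1$ for every $x \in \R$, which holds with equality off $\Z$ and with a slack of one at integers. This yields $\CZ(\Phi) = \lfloor \rho(\Phi) \rfloor + \lceil \rho(\Phi) \rceil \ge 2\lceil \rho(\Phi) \rceil - 1$ at once.

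For $\Phi \in \twSp \setminus \twSpx$ I would use the standard liminf description of the maximal lsc extension from the open dense subset $\twSpx$:
\[
\CZ(\Phi) \;=\; \liminf_{\substack{\Psi \to \Phi \\ \Psi \in \twSpx}} \CZ(\Psi).
\]
Feeding the inequality from the previous paragraph inside this liminf reduces matters to proving $\liminf_{\Psi \to \Phi} \lceil \rho(\Psi) \rceil \ge \lceil \rho(\Phi) \rceil$, i.e.\ lower semi-continuity of the composite $\lceil \rho(\cdot) \rceil$. This follows from the continuity of $\rho$ on $\twSp$, which is given by its description as the lift of $\sigma$ in Lemma \ref{lem:rotation_lift_formula}, combined with the lower semi-continuity of the ceiling function $\lceil \cdot \rceil : \R \to \Z$, whose superlevel sets $\{x : \lceil x \rceil > c\}$ are open half-lines.

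There is no serious obstacle; the only point that warrants checking is that the liminf formula above actually realizes the maximal lsc extension. This is routine once one notes that $\CZ|_{\twSpx}$ is locally constant on each connected component of $\twSpx$: on hyperbolic components $\rho$ is a constant integer or half-integer, and on elliptic components $\rho$ takes values in an open interval $(n,n+1)$ disjoint from $\Z$, so the explicit formula defines a continuous hence lsc function on $\twSpx$ whose liminf extension to $\twSp$ is lsc and visibly maximal.
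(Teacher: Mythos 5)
Your proposal is correct and follows essentially the same two-step structure as the paper: the explicit formula on $\twSpx$, followed by the $\liminf$ characterization of the maximal lower semi-continuous extension off $\twSpx$, combined with continuity of $\rho$. The one genuine (though minor) difference is in the second step. The paper first observes that any $\Phi \notin \twSpx$ has eigenvalue $1$, so by Lemma~\ref{lem:rotation_lift_formula} $\rho(\Phi) \in \Z$, and then bounds the $\liminf$ using the monotonicity of $x \mapsto \lfloor x\rfloor + \lceil x\rceil$ on the interval $(\rho(\Phi)-1/2,\,\rho(\Phi)+1/2)$. You instead first apply $\lfloor x\rfloor + \lceil x\rceil \ge 2\lceil x\rceil - 1$ inside the $\liminf$ and then invoke lower semi-continuity of $\lceil\cdot\rceil$ composed with the continuous $\rho$, which sidesteps the need to know $\rho(\Phi) \in \Z$ entirely. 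Your route is marginally cleaner; the paper's route makes the integer value of $\rho$ at non-regular elements explicit, which is a fact used elsewhere. Both are sound. One small caveat: your closing remark that $\rho$ takes values in an open interval disjoint from $\Z$ on ``elliptic components'' of $\twSpx$ is not quite accurate as stated (a single connected component of $\twSpx$ contains both elliptic and negative-hyperbolic elements, meeting at $-\mathrm{Id}$, and $\rho$ takes half-integer values on the latter); but that aside is not load-bearing, since the paper takes the $\liminf$ description as the definition in Convention~\ref{con:CZ_on_Sp} and your main argument does not rely on it.
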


\begin{proof} For $\Phi \in \twSpx$, (\ref{eqn:CZ_w_rotation_number_on_Sp}) is an immediate consequence of (\ref{eqn:CZ_w_rotation_number_on_Spx}). In the other case, note that the maximal lower semi-continuous extension is defined by the property that
\[
\CZ(\Phi) = \liminf_{\Psi\rightarrow\Phi} \CZ(\Psi) \qquad\text{for any }\Phi \not\in\twSpx
\]
Any $\Phi \not\in \twSpx$ has eigenvalue $1$, and so Lemma \ref{lem:rotation_lift_formula} implies that $\rho(\Phi) \in \Z$. Since $\rho$ is continuous, we find that
\[
\CZ(\Phi) = \liminf_{\Psi\rightarrow\Phi} \lfloor \rho(\Psi) \rfloor + \lceil \rho(\Psi) \rceil \ge  \lfloor \rho(\Phi) - 1/2 \rfloor + \lceil \rho(\Phi) - 1/2 \rceil = 2 \cdot \lceil \rho(\Phi) \rceil - 1
\]
This proves the lower bound in every case. \end{proof}

\subsection{Invariants Of Reeb Orbits} \label{subsec:invariants_of_Reeb_orbits} Let $(Y,\xi)$ be a closed contact $3$-manifold with $c_1(\xi) = 0$ and let $\alpha$ be a contact 1-form on $Y$. 

\vspace{3pt}

Under this hypothesis on the Chern class, $\xi$ is isomorphic as a symplectic vector-bundle to the trivial bundle $\R^2$. A \emph{trivialization} $\tau$ of $\xi$ is a bundle isomorphism
\[\tau:\xi \simeq \R^2 \qquad\text{denoted by}\qquad \tau(y):\xi_y \simeq \R^2 \qquad\text{satisfying}\qquad \tau(y)^*\omega = d\alpha|_\xi\]
Two trivializations are \emph{homotopic} if they are connected by a 1-parameter family of bundle isomorphisms. Given a trivialization $\tau$, we may associate a \emph{linearized} Reeb flow
\begin{equation} \label{eqn:linear_Reeb_flow_wrt_tau} \Phi_\tau:\R \times Y \to \Sp \qquad\text{given by}\qquad \Phi_\tau(T,y) = \tau(\phi_T(y)) \circ d\phi_T(y) \circ \tau^{-1}(y)\end{equation}
Here $\phi:\R \times Y \to Y$ is the Reeb flow, i.e. the flow generated by the Reeb vector field $R$, and we use the notation $\phi_T(y)=\phi(T,y)$. The linearized flow lifts uniquely to a map
\[
\widetilde{\Phi}_\tau:\R \times Y \to \twSp \qquad \text{ with } \qquad \widetilde{\Phi}_\tau|_{0 \times Y} = \on{Id} \in \twSp
\]
We will refer to $\widetilde{\Phi}_\tau$ as the \emph{lifted} linearized Reeb flow. Explicitly, it maps $(T,y)$ to the homotopy class of the path $\Phi_\tau(\cdot,y)|_{[0,T]}$. Note that this lift satisfies the cocyle property
\begin{equation} \label{eqn:linearized_flow_cocycle}
\widetilde{\Phi}_\tau(S+T,y) = \widetilde{\Phi}_\tau(T,\phi_S(y)) \cdot \widetilde{\Phi}_\tau(S,y)
\end{equation}

\vspace{3pt}

\begin{definition} Let $\gamma:\R/L\Z \to Y$ be a closed Reeb orbit of $Y$. The \emph{action} of $\gamma$ is given by
\begin{equation}
\label{eqn:action_of_orbit} 
\mathcal{A}(\gamma) = \int \gamma^*\alpha = L\end{equation}
Likewise, the \emph{rotation number} and \emph{Conley-Zehnder index} of $\gamma$ with respect to $\tau$ are given by 
\begin{equation}
\rho(\gamma,\tau) := \rho \circ \widetilde{\Phi}_\tau(L,y) \qquad \CZ(\gamma,\tau) := \CZ(\widetilde{\Phi}_\tau(L,y)) \qquad\text{where }y = \gamma(0)
\end{equation}
These invariants depend only on the homotopy class of $\tau$, and if $H^1(Y;\Z) = 0$ (e.g. if $Y$ is the 3-sphere) there is a unique trivialization up to homotopy. In this case, we let
\begin{equation}
\label{eqn:rho_and_CZ_of_orbit} 
\rho(\gamma) := \rho(\gamma,\tau) \qquad\text{and}\qquad  \CZ(\gamma) := \CZ(\gamma,\tau) \qquad\text{for any $\tau$}
\end{equation}
\end{definition}

In \S \ref{sec:counter_example}, we will need the following easy observation, which follows immediately from Lemma \ref{lem:CZ_index_3_vs_rho} and our way of defining $\CZ$ (see Convention \ref{con:CZ_on_Sp}).

\begin{lemma} Let $\alpha$ be a contact form on $S^3$ with $\rho(\gamma) > 1$ for every closed Reeb orbit. Then $\alpha$ is dynamically convex.
\end{lemma}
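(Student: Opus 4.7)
The plan is to apply the lower bound on the Conley-Zehnder index in terms of the rotation number that was already established in Lemma~\ref{lem:CZ_index_3_vs_rho}, namely $\CZ(\Phi) \geq 2\lceil \rho(\Phi)\rceil - 1$ for all $\Phi \in \twSp$ (this uses the maximal lower semi-continuous extension from Convention~\ref{con:CZ_on_Sp}). Since $H^1(S^3;\Z) = 0$, the invariants $\rho(\gamma)$ and $\CZ(\gamma)$ are well-defined independently of the trivialization, via $\rho(\gamma) = \rho(\widetilde{\Phi}_\tau(L,\gamma(0)))$ and $\CZ(\gamma) = \CZ(\widetilde{\Phi}_\tau(L,\gamma(0)))$, so the inequality transfers directly to Reeb orbits as $\CZ(\gamma) \geq 2\lceil \rho(\gamma)\rceil - 1$.

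Now I would simply observe the ceiling inequality: for any real number $x > 1$ one has $\lceil x\rceil \geq 2$. Applying this to the hypothesis $\rho(\gamma) > 1$, we conclude $\lceil \rho(\gamma)\rceil \geq 2$, and therefore
\[
\CZ(\gamma) \;\geq\; 2\lceil \rho(\gamma)\rceil - 1 \;\geq\; 2 \cdot 2 - 1 \;=\; 3.
\]
Since this holds for every closed Reeb orbit $\gamma$, the contact form $\alpha$ satisfies the condition in Definition~\ref{def:dynamically_convex} and is dynamically convex.

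There is essentially no obstacle here: the content of the lemma lies entirely in Lemma~\ref{lem:CZ_index_3_vs_rho}, which has already been proved by treating the non-degenerate case via the formula $\CZ = \lfloor \rho\rfloor + \lceil \rho\rceil$ and then extending by lower semi-continuity. The present lemma is a one-line corollary, and its role is simply to package a convenient sufficient condition for dynamical convexity that will be used in Section~\ref{sec:counter_example}, where one controls the rotation numbers (rather than the Conley-Zehnder indices directly) of orbits in the open-book construction.
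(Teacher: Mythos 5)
Your proof is correct and is exactly the argument the paper has in mind: the paper states that the lemma ``follows immediately from Lemma~\ref{lem:CZ_index_3_vs_rho} and our way of defining $\CZ$,'' and your application of $\CZ(\gamma) \ge 2\lceil\rho(\gamma)\rceil - 1$ together with $\rho(\gamma) > 1 \Rightarrow \lceil\rho(\gamma)\rceil \ge 2$ is precisely that. No issues.
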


\subsection{Ruelle Invariant} \label{subsec:Ruelle_invariant} Let $(Y,\xi)$ be a closed contact $3$-manifold with $c_1(\xi) = 0$ equipped with a contact form $\alpha$ and a homotopy class of trivialization $[\tau]$ of $\xi$. Here we discuss the \emph{Ruelle invariant}
\[\Ru(Y,\alpha,[\tau]) \in \R\] 
associated to the data of $Y,\alpha$ and $[\tau]$. 

\begin{remark} This invariant was originally introduced by Ruelle in \cite{r1985} for area preserving diffeomorphisms of surfaces and volume preserving flows on 3-manifolds. Variants of this construction have also appeared under different names in other settings, e.g. as the asymptotic Maslov index \cite[p. 1423]{cgip2003}. \end{remark}

It will be helpful to describe a more general construction that subsumes that of the Ruelle invariant. For this purpose, we also fix a continuous quasimorphism
\[q:\twSp \to \R\]
Pick a representative trivialization $\tau$ of $[\tau]$ and let $\widetilde{\Phi}_\tau:\R\times Y \to \twSp$ be the lifted linearized Reeb flow. We can associate a time-averaged version of $q$ over the space $Y$, as follows. 

\begin{prop} \label{prop:general_ruelle_invariant} The $1$-parameter family of functions $f_T:Y \to \R$ given by the formula
\begin{equation}
f_T(y) := \frac{q \circ \widetilde{\Phi}_\tau(T,y)}{T}
\end{equation}
converges in $L^1(Y;\R)$ and almost everywhere to a function $f(\alpha,q,\tau):Y \to \R$ with the following properties.
\begin{itemize}
	\item[(a)] (Quasimorphism) If $q$ and $r$ are equivalent quasimorphisms, i.e. $|q - r|$ is bounded, then
	\[f(\alpha,q,\tau) = f(\alpha,r,\tau)\]
	\item[(b)] (Trivialization) If $\sigma$ and $\tau$ are homotopic trivializations of $\xi$, then
	\[f(\alpha,q,\sigma) = f(\alpha,q,\tau)\]
    \item[(c)] (Contact Form) The integral $F(\alpha)$ of $f(\alpha,q,\tau)$ over $Y$ is continuous in the $C^2$-topology on $\Omega^1(Y)$.
\end{itemize}
\end{prop}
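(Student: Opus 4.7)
The plan is to reduce the a.e.\ and $L^1$ convergence to the subadditive ergodic theorem and then deduce (a), (b), (c) by essentially routine arguments. First I would set $g(T,y) := q\circ \widetilde{\Phi}_\tau(T,y)$ and combine the cocycle \eqref{eqn:linearized_flow_cocycle} with the quasimorphism bound \eqref{eqn:quasimorphism_property} to derive the approximate additivity
\[ |g(S+T,y) - g(T,\phi_S(y)) - g(S,y)| \le C \qquad\text{for all } S,T\ge 0, \]
where $C$ depends only on $q$. Setting $G_T(y) := g(T,y)+C$ upgrades this to honest subadditivity $G_{S+T}\le G_S + G_T\circ\phi_S$. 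The Reeb flow preserves $\mu_\alpha := \alpha\wedge d\alpha$ (a Cartan-formula computation using $\alpha(R)=1$ and $\iota_R d\alpha=0$), and $G_1\in L^\infty(Y)\subset L^1(Y,\mu_\alpha)$ because $q$ is continuous and $\widetilde{\Phi}_\tau([0,1]\times Y)$ is compact in $\twSp$. Applying Kingman's subadditive ergodic theorem to the flow $\phi$ on $(Y,\mu_\alpha)$ then yields a flow-invariant $L^1$ function $f(\alpha,q,\tau)$ with $G_T/T\to f(\alpha,q,\tau)$ a.e.\ and in $L^1$. Since $G_T/T - f_T = C/T\to 0$ uniformly on $Y$, $f_T$ converges to the same limit.

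For (a) and (b) my strategy is to verify that the two relevant cocycles differ by a function of $y$ bounded uniformly in $T$, which vanishes upon dividing by $T$. Property (a) is immediate: $|q-r|\le M$ gives $|f_T^q - f_T^r|\le M/T$. For (b), a homotopy from $\sigma$ to $\tau$ lifts the gauge transformation $h := \sigma\tau^{-1}:Y\to\on{Sp}(2)$ to a continuous map $\tilde h:Y\to\twSp$; the conjugation formula
\[ \widetilde{\Phi}_\sigma(T,y) = \tilde h(\phi_T y)\cdot\widetilde{\Phi}_\tau(T,y)\cdot\tilde h(y)^{-1}, \]
combined with two applications of the quasimorphism bound and the boundedness of $q$ on the compact set $\tilde h(Y)\subset\twSp$, bounds $|q\circ\widetilde{\Phi}_\sigma - q\circ\widetilde{\Phi}_\tau|$ uniformly in $T$ and $y$.

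For (c), I would approximate $F(\alpha) := \int_Y f(\alpha,q,\tau)\,\mu_\alpha$ by its finite-time truncations $F_{T_0}(\alpha) := T_0^{-1}\int_Y g^\alpha(T_0,\cdot)\,\mu_\alpha$. For fixed $T_0$, smooth dependence of ODE flows on parameters makes $\alpha\mapsto\widetilde{\Phi}_\tau^\alpha(T_0,\cdot)$ continuous in the $C^2$-topology on $\alpha$; combining this with the $C^1$-continuity of $\mu_\alpha$ in $\alpha$ shows that $F_{T_0}$ is $C^2$-continuous. Since the defect $C$ depends only on $q$ and $\sup_Y|g^\alpha(T_0,\cdot)|$ is locally uniformly bounded in $\alpha$, the Kingman convergence $f_{T_0}\to f(\alpha,q,\tau)$ in $L^1(\mu_\alpha)$ comes with a rate locally uniform in $\alpha$, so $F_{T_0}\to F$ locally uniformly and $F$ inherits $C^2$-continuity.

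The main obstacle is precisely this locally uniform convergence in (c): one must control the Kingman $L^1$ rate simultaneously for contact forms ranging over a $C^2$-neighborhood. The remaining ingredients are either direct invocations of the ergodic theorem or elementary bounded-perturbation arguments that die upon dividing by $T$.
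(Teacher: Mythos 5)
Your overall strategy matches the paper's, and the Kingman reduction plus properties (a) and (b) are handled correctly (the $G_T := g_T + C$ trick, the gauge-conjugation formula, the $O(1/T)$ damping of bounded perturbations). The gap is in (c), precisely where you flag the obstacle. You assert that ``the Kingman convergence $f_{T_0}\to f(\alpha,q,\tau)$ in $L^1(\mu_\alpha)$ comes with a rate locally uniform in $\alpha$'' because the defect constant depends only on $q$. Kingman's theorem gives no rate of $L^1$ convergence, and no such rate can be read off the defect constant; you identify the difficulty but then assert it away rather than resolving it. What actually works, and what the paper does, is to derive a rate for the \emph{integral} rather than the $L^1$ norm. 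The approximate cocycle identity gives the pointwise bound
\[
\Bigl| f_{nT} \;-\; \tfrac{1}{n}\textstyle\sum_{k=0}^{n-1} f_T\circ\phi_T^k \Bigr| \;\le\; \frac{C}{T},
\]
and because $\phi_T$ preserves $\alpha\wedge d\alpha$, each summand integrates to $\int_Y f_T\,\alpha\wedge d\alpha$; integrating the bound and letting $n\to\infty$ yields
\[
\Bigl| F(\alpha) - \int_Y f_T\,\alpha\wedge d\alpha \Bigr| \;\le\; \frac{C\cdot\vol{Y,\alpha}}{T},
\]
with $C$ depending only on $q$. This explicit estimate is the ingredient that makes the finite-time truncations converge to $F$ locally uniformly in $\alpha$ and lets the $C^2$-continuity of $\alpha\mapsto\int_Y f_T\,\alpha\wedge d\alpha$ pass to the limit. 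Without carrying out this telescoping step, the continuity in (c) does not follow.

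A secondary point: after restoring honest subadditivity via $G_T$, you check integrability of $G_1$ but not the remaining hypothesis of the continuous-time Kingman theorem as stated in the paper, namely $\int_Y g_T\,\mu_\alpha \ge -DT$. It does follow, from the reverse inequality $g_{S+T}\ge g_S + \phi_S^*g_T - C$ (also a consequence of the quasimorphism bound) plus integrability of $\sup_{0\le S\le 1}|g_S|$, but this should be said; otherwise the a.e.\ limit could a priori be $-\infty$ on a positive-measure set and fail to lie in $L^1$.
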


In order to prove the existence part of this result, we will need to use a verison of Kingman's subadditivity theorem appearing in \cite{k1973}. 

\begin{thm} \label{thm:kingman_ergodic} Let $Y$ be a measure space and let $\phi:\R \times Y \to Y$ be a flow with invariant measure $\mu$. Let $g_T:Y \to \R$ for $T \in \R$ be a family of $L^1(Y,\mu)$ functions such that, for some constants $C,D > 0$, we have
\[g_{S+T} \le g_S + \phi^*_Sg_T + C \qquad\qquad \int_Y g_T\cdot \mu \ge -D \cdot T \qquad\qquad \int_Y \big(\sup_{0 \le S \le 1} |g_S|\big) \cdot \mu < \infty\]
Then the maps $\frac{g_T}{T}$ converge in $L^1(Y,\mu)$ and pointwise almost everywhere as $T \to \infty$.
\end{thm}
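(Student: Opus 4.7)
The plan is to reduce the given inequality to the standard strictly subadditive, discrete-time setup of Kingman's theorem and invoke the classical result of \cite{k1973}. Three reductions are needed: absorbing the additive slack $C$, passing from continuous to integer time, and upgrading an almost-everywhere limit to an $L^1$ limit.

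For the first reduction, set $h_T := g_T + C$ for $T > 0$. The hypothesis $g_{S+T} \leq g_S + \phi_S^* g_T + C$ then becomes the genuine subadditive cocycle inequality $h_{S+T} \leq h_S + \phi_S^* h_T$, while $h_T/T - g_T/T = C/T \to 0$, so convergence properties transfer between the two families. For the second reduction, let $u := \sup_{0 \leq S \leq 1}|g_S|$, which lies in $L^1(\mu)$ by hypothesis. Given $T \geq 1$ with $n := \lfloor T \rfloor$, applying subadditivity in both directions sandwiches $g_T$ between $g_n + u \circ \phi_n + C$ above and $g_{n+1} - u \circ \phi_T - C$ below. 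Since $\mu$ is $\phi$-invariant and $u \in L^1$, the Birkhoff ergodic theorem applied to $u$ gives convergence of the Ces\`aro sums $N^{-1}\sum_{k \leq N} u \circ \phi_k$, which forces $u \circ \phi_n / n \to 0$ almost everywhere and in $L^1$. Consequently, the limits of $g_T/T$ as $T \to \infty$ exist (almost everywhere, resp.\ in $L^1$) if and only if the limits of $g_n/n$ through integers do, and they agree.

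The remaining task is the discrete subadditive ergodic theorem for the $\phi_1$-cocycle $\{h_n\}_{n \in \N}$. It satisfies $h_{m+n} \leq h_m + h_n \circ \phi_m$, $h_n \in L^1$, and the integral lower bound $\int_Y h_n \, d\mu \geq -(D+C)n$. By Fekete's lemma the sequence $a_n := \int_Y h_n \, d\mu$ satisfies $a_n/n \to \gamma := \inf_n a_n/n \in \R$. Kingman's theorem then produces a $\phi_1$-invariant $f \in L^1(\mu)$ with $h_n/n \to f$ almost everywhere, and $L^1$-convergence is deduced from uniform integrability of $(h_n/n)^+$, itself a consequence of the iterated-subadditivity bound $h_n \leq \sum_{k=0}^{n-1} h_1 \circ \phi_k$ together with Birkhoff's theorem applied to $h_1$.

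The main obstacle is the almost-everywhere convergence in Kingman's theorem itself, i.e.\ the identification $\limsup h_n/n = \liminf h_n/n$ almost everywhere; I would not reprove this from scratch, but cite \cite{k1973} directly (or, alternatively, the short Katznelson--Weiss covering argument). All three reductions above---absorbing $C$, discretizing time via the integrable bound $u$, and upgrading to $L^1$---are routine consequences of the hypotheses once the classical theorem is available.
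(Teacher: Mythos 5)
The paper does not actually prove Theorem~\ref{thm:kingman_ergodic}: it only remarks that the statement follows from Theorem~4 and the discussion in \S1.3 of Kingman's 1973 paper, noting that the additive constant $C$ can be absorbed. Your proposal takes the same basic route (defer to Kingman's discrete subadditive theorem and reduce to it), but spells out the reductions explicitly. The absorption of $C$ via $h_T := g_T + C$ is correct, the $L^1$ part of the time-discretization is correct (since $\|u\circ\phi_T\|_{L^1}=\|u\|_{L^1}$), and the uniform-integrability upgrade to $L^1$ convergence is standard.

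There is, however, a genuine subtlety in the discretization step that you treat as routine but is actually the crux of the continuous-parameter version. Your upper sandwich $g_T \le g_n + u\circ\phi_n + C$ involves $u$ only at integer flow times, and indeed $u\circ\phi_n/n\to 0$ a.e.\ follows from Birkhoff applied to $u$. But your lower sandwich $g_T \ge g_{n+1} - u\circ\phi_T - C$ involves $u\circ\phi_T$ at \emph{real} time $T$, and the claim that $u\circ\phi_n/n\to 0$ over integers ``forces'' the real-time and integer-time limits of $g_T/T$ to agree almost everywhere does not follow: one would need something like $\sup_{n\le T\le n+1} u(\phi_T(\cdot))/n \to 0$ a.e., i.e.\ integrability of $v := \sup_{0\le S\le 1} u\circ\phi_S = \sup_{0\le R,S\le 1}|g_R\circ\phi_S|$. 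The stated hypothesis only gives $\int \sup_{0\le S\le 1}|g_S|\,d\mu < \infty$, and the one-sided subadditive inequality only lower-bounds $g_R\circ\phi_S$ in terms of $g$ at the base point; it yields no upper bound and hence no control on $v$. (For a flow with an unbounded $L^1$ function $u$, $\sup_{n\le T\le n+1}u\circ\phi_T$ can be infinite for a.e.\ point even though $u\circ\phi_n/n\to 0$.) This is exactly the point where Kingman's \S1.3 does real work, under separability/measurability hypotheses on the continuous-parameter process, and it should be flagged as the nontrivial step rather than grouped with the genuinely easy reductions. Since both you and the paper ultimately cite Kingman, this does not invalidate the proposal, but the assertion that the discretization is a ``routine consequence'' is not accurate as written.
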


\begin{remark} \label{rmk:discrete_kingman} There is also a version of Theorem \ref{thm:kingman_ergodic} in \cite{k1973} for a discrete dynamical system, i.e a map $\phi:Y \to Y$ preserving $\mu$. The statement is directly analogous to Theorem \ref{thm:kingman_ergodic}, but the last condition on the integrability of $\sup_{0 \le S \le 1} |g_S|$ is unnecessary. We will use this version in \S \ref{subsec:hamiltonian_disk_maps}. \end{remark}

\begin{remark} This statement is a slight variation of Theorem 4 in \cite{k1973}, which states the result for general sub-additive processes. Our version follows from the discussion in \S 1.3 of \cite{k1973} for the continuous parameter space $\R$. Note that we also weaken sub-additivity by allowing $g_T$ to be sub-additive with respect to $T$ up to an overall constant factor. \end{remark}

\begin{proof} (Proposition \ref{prop:general_ruelle_invariant}) We prove the existence of the limit and the properties (a)-(c) separately.

\vspace{3pt}

{\bf Convergence.} We apply Kingman's ergodic theorem, Theorem \ref{thm:kingman_ergodic}. Fix a constant $C > 0$ for the quasimorphism $q$ satisfying (\ref{eqn:quasimorphism_property}). Let $g_T$ denote the function on $Y$ given by
\[g_T := Tf_T = q \circ \widetilde{\Phi}_\tau(T,-)\]
Now we verify the properties in Theorem \ref{thm:kingman_ergodic}. First, due to the cocycle property (\ref{eqn:linearized_flow_cocycle}) we have
\begin{equation}\label{eqn:subadditive_process_a}
g_{S + T} = q \circ \widetilde{\Phi}_\tau(S+T,-) \le q \circ \widetilde{\Phi}_\tau(S,-) + q \circ \widetilde{\Phi}_\tau(T,\phi_S(-)) + C = g_S + \phi^*_Sg_T + C \end{equation}
We can analogously show that $g_{S+T} \ge g_S + \phi^*_Sg_T - C$. In particular, if $T > 0$ is a sufficiently large time with $T = n + S$ and $S \in [0,1]$, then 
\begin{equation}\label{eqn:subadditive_process_b}
\int_Y g_T \cdot \alpha \wedge d\alpha \ge \sum_{k=0}^{n-1}\int_Y \phi^*_kg_1  \cdot \alpha \wedge d\alpha + \int_Y \phi_n^* g_S \cdot \alpha \wedge d\alpha - CT \ge -AT\end{equation}
Here $A$ is any number larger than $C$ and larger than the quantity
\[
-\on{min}\left\{\int_Y g_S \cdot \alpha \wedge d\alpha \; : \; S \in [0,1]\right\}
\]
Finally, since $q \circ \widetilde{\Phi}_\tau$ is continuous on $\R \times Y$, it is clear that $\on{sup}_{T \in [0,1]} |f_T|$ is continuous and bounded. In particular, it is integrable. Thus $g_T$ satisfies the criteria in Theorem \ref{thm:kingman_ergodic}, and we may conclude that $\frac{g_T}{T}$ converges in $L^1$ and almost everywhere to a map
\[
f(\alpha,q,\tau) \in L^1(Y;\R)
\]

\vspace{3pt}

{\bf Quasimorphisms.} Let $q$ and $r$ be equivalent quasimorphisms, and pick $C > 0$ such that $|q - r| < C$ everywhere. Then
\[
\|\frac{q \circ \widetilde{\Phi}_\tau}{T} - \frac{r \circ \widetilde{\Phi}_\tau}{T}\|_{L^1} \le \frac{C \cdot \vol{Y,\alpha}}{T}
\]
Taking the limit as $T \to \infty$ shows that the limiting functions $f(\alpha,q,\tau)$ and $f(\alpha,r,\tau)$ are equal.

\vspace{3pt}

{\bf Trivializations.} Let $\sigma$ and $\tau$ be two trivializations of $\xi$ in the homotopy class $[\tau]$. Then there is a transition map $\Psi:Y \to \Sp$ given by
\[\Psi(y):\R^2 \to \R^2 \qquad\text{with}\qquad \Psi(y) = \tau(y) \cdot \sigma(y)^{-1}\]
The linearized flows of $\sigma$ and $\tau$ are related via this transition map, by the following formula.
\[
\Phi_\tau(T,y) = \Psi(\phi(T,y)) \cdot \Phi_\sigma(T,y) \cdot \Psi^{-1}(y)
\]
Since $\sigma$ and $\tau$ are homotopic, $\Psi$ is homotopic to a constant map. In particular, $\Psi$ lifts to the universal cover of $\Sp$. Thus we may write
\[
\widetilde{\Phi}_\tau(T,y) = \widetilde{\Psi}(\phi(T,y)) \cdot \widetilde{\Phi}_\sigma(T,y) \cdot \widetilde{\Psi}^{-1}(y)
\]
Here $\widetilde{\Psi}:Y \to \twSp$ is any lift of $\Psi$. The quasimorphism property of $\rho$ now implies that
\[
\|\frac{q \circ \widetilde{\Phi}_\sigma(T,y)}{T} - \frac{q \circ \widetilde{\Phi}_\tau(T,y)}{T}\|_{L^1} \le \frac{2C + \sup |q \circ \widetilde{\Psi}| + \sup |q \circ \widetilde{\Psi}^{-1}|}{T} \cdot \vol{Y,\alpha}
\]
Taking the limit as $T \to \infty$ shows that $f(\alpha,q,\sigma) = f(\alpha,q,\tau)$. 

\vspace{3pt}

{\bf Contact Form.} Fix a contact form $\alpha$ and an $\epsilon > 0$. Since $q$ is a quasimorphism, there exists a $C > 0$ depending only on $q$ such that
\[
|\rho \circ \widetilde{\Phi}_\tau(nT,y) - \sum_{k=0}^{n-1} \rho \circ \widetilde{\Phi}_\tau(T,\phi_T^k(y))| \le Cn \qquad \text{ for any } n,T > 0
\]
We can divide by $nT$ and rewrite this estimate in terms of $f_T$ to see that
\[
|f_{nT} - \frac{1}{n}\sum_{k=0}^{n-1} f_T \circ \phi_T^k| \le \frac{C}{T} \qquad \text{ for any } n,T > 0
\]
We can then integrate over $Y$ and take the limit as $n \to \infty$ to acquire
\begin{equation} \label{eqn:prop:general_ruelle_invariant_a}
|F(\alpha) - \int_Y f_T \cdot \alpha \wedge d\alpha| = \lim_{n \to \infty} |\int_Y (f_{nT} - f_T) \cdot \alpha \wedge d\alpha|\end{equation}
\[ = \lim_{n \to \infty} |\int_Y (f_{nT} - \frac{1}{n}\sum_{k=0}^{n-1} f_T \circ \phi_T^k) \cdot \alpha \wedge d\alpha| \le \frac{C \cdot \vol{Y,\alpha}}{T}
\]
We use the fact that $\phi_T$ preserves $\alpha \wedge d\alpha$ in moving from the first to the second line above.

Next, fix a different contact form $\beta$. Let $\widetilde{\Psi}_\tau$ be the lifted linearized flow for $\beta$, and let 
\[g_T:Y \to \R \quad\text{where}\quad g_T(y) = \frac{q \circ \widetilde{\Psi}_\tau(T,-)}{T}\]
Due to (\ref{eqn:prop:general_ruelle_invariant_a}), we can fix a $T > 0$ such that, for all $\beta$ sufficiently $C^0$-close to $\alpha$, we have
\begin{equation} \label{eqn:prop:general_ruelle_invariant_b} 
|F(\alpha) - \int_Y f_T  \cdot \alpha \wedge d\alpha| < \frac{\epsilon}{3} \quad\text{and}\quad
|F(\beta) - \int_Y g_T  \cdot \beta \wedge d\beta| < \frac{2C\vol{Y,\alpha}}{T} < \frac{\epsilon}{3}\end{equation}
Furthermore, we may bound the integrals of $f_T$ and $g_T$ as follows.
\begin{equation} \label{eqn:prop:general_ruelle_invariant_c}
|\int_Y f_T  \cdot \alpha \wedge d\alpha - \int_Y g_T  \cdot \beta \wedge d\beta| \le \int_Y |f_T - g_T|  \cdot \alpha \wedge d\alpha + |\int_Y f_T  \cdot (\alpha \wedge d\alpha - \beta \wedge d\beta)|
\end{equation}
\[
\le  \|f_T - g_T\|_{C^0(Y)} \cdot \vol{Y,\alpha} + \|g_T\|_{C^0(Y)} \cdot |\vol{Y,\alpha} - \vol{Y,\beta}|
\]
We can choose $\beta$ sufficiently close to $\alpha$ in $C^2(Y)$ so that $\widetilde{\Psi}_\tau$ is arbitrarily $C^0$-close to $\widetilde{\Phi}_\tau$ on $[0,T]\times Y$. Since $Y$ is compact, the image of $\widetilde{\Phi}(T,-)$ is compact in $\twSp$ for fixed time $T$. Thus, since $q$ is continuous, $g_T = q \circ \widetilde{\Psi}_\tau(T,-)$ can also be made arbitrarily $C^0$-close to $f_T = q \circ \widetilde{\Phi}_\tau(T,-)$. In particular, for $\beta$ sufficiently $C^2$-close to $\alpha$ we have
\begin{equation} \label{eqn:prop:general_ruelle_invariant_d}
\|f_T - g_T\|_{C^0(Y)} \cdot \vol{Y,\alpha} + \|g_T\|_{C^0(Y)} \cdot |\vol{Y,\alpha} - \vol{Y,\beta}| < \frac{\epsilon}{3}
\end{equation}
Together, (\ref{eqn:prop:general_ruelle_invariant_b}), (\ref{eqn:prop:general_ruelle_invariant_c}) and (\ref{eqn:prop:general_ruelle_invariant_d}) imply that, for $\beta$ sufficiently $C^2$-close to $\alpha$, we have $|F(\alpha) - F(\beta)| < \epsilon$. This proves continuity.

\vspace{4pt}

\noindent This concludes the proof of the existence and properties of $f(\alpha,q,\tau)$, and of Proposition \ref{prop:general_ruelle_invariant}.\end{proof}

Proposition \ref{prop:general_ruelle_invariant} allows us to introduce the Ruelle invariant as an integral quantity, as follows.

\begin{definition}[Ruelle Invariant] The \emph{local rotation number} $\rot_\tau$ of a closed contact manifold $(Y,\alpha)$ equipped with a (homotopy class of) trivialization $\tau$ is the following limit in $L^1$.
\begin{equation} \label{eqn:general_ruelle_invariant} \rot_\tau:Y \to \R \qquad\text{given by}\qquad \rot_\tau := \lim_{T \to \infty} \frac{\rho \circ \widetilde{\Phi}_\tau(T,-)}{T}\end{equation}
Similarly, the \emph{Ruelle invariant} $\Ru(Y,\alpha,\tau)$ is the integral of the local rotation number over $Y$, i.e.
\begin{equation} \label{eqn:Ruelle_invariant}
\Ru(Y,\alpha,\tau) = \int_Y \rot_\tau \cdot \alpha \wedge d\alpha = \lim_{T \to \infty} \frac{1}{T} \int_Y \rho \circ \widetilde{\Phi}_\tau \cdot \alpha \wedge d\alpha
\end{equation}
\end{definition} 

We will require an alternative expression for the Ruelle invariant in order to derive estimates later in the paper. 

\vspace{3pt}

The Reeb flow $\phi$ on $Y$ preserves the contact structure, and so lifts to a flow on the total space of the contact structure $\xi$. Since this flow is fiberwise linear, it descends to the (oriented) projectivization $P\xi$. A trivialization $\tau$ determines an identification $P\xi \simeq Y \times \R/\Z$, and so a flow
\begin{equation}\label{eqn:lifted_flow_Y_R_Z} \bar{\Phi}:\R \times Y \times \R/\Z \to Y \times \R/\Z \qquad \text{generated by a vector field $\bar{R}$ on $Y \times \R/\Z$}\end{equation}
Let $\theta:Y \times \R/\Z \to \R/\Z$ denote the tautological projection.

\begin{definition} \label{def:rotation_density} The \emph{rotation density} $\varrho_\tau:Y \times \R/\Z \to \R$ is the Lie derivative
\begin{equation}\varrho_\tau := \mathcal{L}_{\bar{R}}(\theta)\end{equation}
\end{definition}

\begin{lemma} \label{lem:Ru_alternate_formula} The Ruelle invariant $\Ru(Y,\alpha,\tau)$ is written using the rotation density $\varrho_\tau$ as
\[
\Ru(Y,\alpha,\tau) = \lim_{T \to \infty} \frac{1}{T}\int_0^T \big(\int_Y \bar{\Phi}_t^*\varrho_\tau(-,s) \cdot \alpha\wedge d\alpha\big) dt \qquad\text{for any fixed }s \in \R/\Z
\]
\end{lemma}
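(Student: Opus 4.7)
The strategy is to rewrite the time-averaged rotation number appearing in the definition (\ref{eqn:Ruelle_invariant}) as the time-integral of the rotation density along Reeb trajectories, and then interchange the orders of integration via Fubini.

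First I would unpack the definition of $\varrho_\tau$. Lifting the circle bundle $Y\times\R/\Z$ to $Y\times\R$, the flow $\bar{\Phi}_t$ lifts to a flow $\widehat{\Phi}_t$ generated by a vector field whose $\R$-component is exactly the pullback $\varrho_\tau$. Consequently, for any $(y,s)\in Y\times\R/\Z$ with a fixed choice of lift of $s$ to $\R$, the $\R$-coordinate of $\widehat{\Phi}_T(y,s)$ minus $s$ equals
\[
\int_0^T \bar{\Phi}_t^*\varrho_\tau(y,s)\,dt.
\]
On the other hand, by tracing the construction of the projectivized flow through the trivialization $\tau$, the same quantity is precisely the lift to $\R$ of the path $t\mapsto \Phi_\tau(t,y)\cdot s/|\Phi_\tau(t,y)\cdot s|$ starting at $s$, that is, by Definition \ref{def:rotation_number_rel_s},
\begin{equation}\label{eqn:plan_identity}
\rho_s\circ\widetilde{\Phi}_\tau(T,y)\;=\;\int_0^T \bar{\Phi}_t^*\varrho_\tau(y,s)\,dt.
\end{equation}
Identifying these two expressions is the technical heart of the lemma; it amounts to checking that the infinitesimal generator of the projectivized linearized Reeb flow on $Y\times\R/\Z$, viewed in the trivialization $\tau$, acts on the circle factor by the derivative of the angular lift used to define $\rho_s$.

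Next I would use Lemma \ref{lem:rhos_all_equivalent}, which gives $|\rho-\rho_s|\le 1$ pointwise on $\twSp$. Since this bound is uniform, we have
\[
\Big|\tfrac{1}{T}\int_Y \rho\circ\widetilde{\Phi}_\tau(T,-)\cdot\alpha\wedge d\alpha
-\tfrac{1}{T}\int_Y \rho_s\circ\widetilde{\Phi}_\tau(T,-)\cdot\alpha\wedge d\alpha\Big|
\le \frac{\vol{Y,\alpha}}{T}\xrightarrow{T\to\infty}0,
\]
so the Ruelle invariant can equivalently be computed using $\rho_s$ in place of $\rho$ in formula (\ref{eqn:Ruelle_invariant}).

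Finally I would substitute (\ref{eqn:plan_identity}) and apply Fubini's theorem. Since $\bar{\Phi}_t^*\varrho_\tau(y,s)$ is continuous in $(t,y)$ on the compact set $[0,T]\times Y$, it is jointly integrable against the finite measure $dt\otimes \alpha\wedge d\alpha$, and swapping the order of integration gives
\[
\tfrac{1}{T}\int_Y \rho_s\circ\widetilde{\Phi}_\tau(T,y)\cdot\alpha\wedge d\alpha(y)
=\tfrac{1}{T}\int_0^T\Big(\int_Y \bar{\Phi}_t^*\varrho_\tau(-,s)\cdot\alpha\wedge d\alpha\Big)dt.
\]
Taking $T\to\infty$ and combining with the previous step yields the claimed formula. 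The only step I expect to require genuine care is verifying (\ref{eqn:plan_identity}), since it requires matching the ad hoc analytic definition of $\rho_s$ as a lift through the covering $\R\to S^1$ with the geometric Lie-derivative definition of $\varrho_\tau$; everything else is a bounded-error replacement plus Fubini.
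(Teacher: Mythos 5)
Your proposal is correct and follows essentially the same route as the paper's proof: establish the key identity $\rho_s \circ \widetilde{\Phi}_\tau(T,y) = \int_0^T \bar{\Phi}_t^*\varrho_\tau(y,s)\,dt$ by comparing the lift of the projectivized flow with the angular lift defining $\rho_s$, replace $\rho$ by $\rho_s$ using the uniform bound $|\rho - \rho_s|\le 1$ (the paper packages this as an appeal to Proposition \ref{prop:general_ruelle_invariant}(a), but the content is the same bounded-difference observation), and finish with Fubini. You also correctly identify the matching of the two descriptions of the lifted circle coordinate as the one step requiring real care.
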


\begin{proof} By comparing Definition \ref{def:rotation_number_rel_s} with the formula (\ref{eqn:lifted_flow_Y_R_Z}), one may verify that 
\[\sigma_s \circ \Phi_\tau(T,y) \quad\text{and}\quad \theta \circ \bar{\Phi}(T,y,s) - s \quad \text{ are equal in } \R/\Z\]
Therefore, these formulas define a single map $\R \times Y \times \R/\Z \to \R/\Z$, admitting a unique lift to a map $F:\R \times Y \times \R/\Z  \to \R$ that vanishes on $0 \times Y \times \R/\Z$. The first formula implies that
\begin{equation} \label{eqn:Ru_alternate_formula_a}
F(T,y,s) = \rho_s \circ \widetilde{\Phi}_\tau(T,y)\end{equation}
On the other hand, let $t$ be the $\R$-variable of $F$ and $\theta \circ \bar{\Phi}$. Then the $t$-derivative of $F$ is 
\[\frac{dF}{dt}|_T = \frac{d}{dt}(\theta \circ \bar{\Phi})|_T = \bar{\Phi}_T^*(\mathcal{L}_{\bar{R}}(\theta)) = \bar{\Phi}_T^*\varrho_\tau \]
Integrating this identity and combining it with (\ref{eqn:Ru_alternate_formula_a}), we acquire the formula
\begin{equation} \label{eqn:Ru_alternate_formula_b}
\rho_s \circ \widetilde{\Phi}_\tau(T,y) = F(T,y,s) = \int_0^T \bar{\Phi}_t^*\varrho_\tau(y,s) \cdot dt
\end{equation} \label{eqn:Ru_alternate_formula_c}
Now, since $\rho_s$ and $\rho$ are equivalent by Lemma \ref{lem:rhos_all_equivalent}, we can apply Proposition \ref{prop:general_ruelle_invariant}(a) to see that
\begin{equation}\Ru(Y,\alpha,\tau) = \lim_{T \to \infty} \int_Y \frac{\rho_s \circ \widetilde{\Phi}_\tau(T,-)}{T} \cdot \alpha \wedge d\alpha\end{equation}
We then apply (\ref{eqn:Ru_alternate_formula_b}) and Fubini's theorem to see that the righthand side is given by
\begin{equation} \label{eqn:Ru_alternate_formula_d}
\lim_{T \to \infty} \frac{1}{T}\int_Y \big(\int_0^T \bar{\Phi}_t^*\varrho_\tau(-,s)  dt \big) \alpha \wedge d\alpha = \lim_{T \to \infty} \frac{1}{T} \int_0^T \big(\int_Y \bar{\Phi}_t^*\varrho_\tau(-,s) \cdot \alpha \wedge d\alpha\big) dt\end{equation}
This concludes the proof. \end{proof}

\section{Bounding The Ruelle Invariant} \label{sec:main_estimate} Let $X \subset \R^4$ be a convex, star-shaped domain with smooth contact boundary $(Y,\lambda)$. In this section, we derive the following estimate for the Ruelle ratio. 

\begin{prop} \label{prop:Ru_bound} There exist positive constants $c$ and $C$ independent of $Y$ such that
\begin{equation*}
c \le \ru(Y,\lambda)\cdot \sys(Y,\lambda)^{1/2} \le C
\end{equation*}
\end{prop}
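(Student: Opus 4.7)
The plan is to reduce the estimate to a comparison with standard ellipsoids, where the ratio is explicit, and then transfer the bound to a general convex $X$ by sandwiching $X$ between two ellipsoids and controlling the Ruelle invariant via curvature integrals that are monotonic under convex inclusion.

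First, by John's ellipsoid theorem there is an ellipsoid $E$ with $E \subset X \subset 4\cdot E$. Since affine symplectomorphisms of $\R^{4}$ preserve Euclidean volume, the minimal period of a closed Reeb orbit, and the Ruelle invariant, I can normalize to $E = E(a,b)$ for some $0 < a \le b$. For this model ellipsoid one has $\Ru(E) = a+b$, minimal period $a$, and contact volume $ab$, so $\ru(E)\cdot \sys(E)^{1/2} = \sys(E) + 1 \in (1,2]$; the target range is already realized on the model.

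Next, I control the systolic and volume quantities of $X$ directly. The minimal Reeb period on a convex boundary equals the Ekeland--Hofer--Zehnder capacity, which is monotonic under inclusion of convex domains, and Euclidean volume is obviously monotonic; combined with the John sandwich this yields
\[
\tfrac{ab}{2} \le \vol{X} \le 2^{8}\cdot\tfrac{ab}{2}, \qquad 2^{-8}\cdot\tfrac{a}{b} \le \sys(Y,\lambda) \le 2^{8}\cdot\tfrac{a}{b},
\]
as already observed in the introduction. These two sandwiches let me trade $\ru(Y,\lambda)\cdot \sys(Y,\lambda)^{1/2}$ for the corresponding quantity on $E$ up to a uniform constant, provided I can also sandwich $\Ru(Y,\lambda)$ by some quantity that is monotonic under convex inclusion.

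The central obstacle is that the Ruelle invariant itself is not manifestly monotonic under convex inclusion. My plan for circumventing this is to invoke the curvature--rotation formula of Ragazzo--Salom\~ao (Proposition \ref{prop:rotation_curvature_formula}), which expresses the rotation density $\varrho_\tau$ of Definition \ref{def:rotation_density} in terms of the second fundamental form of $Y \subset \R^{4}$. Feeding this into the alternate formula of Lemma \ref{lem:Ru_alternate_formula} rewrites $\Ru(Y,\lambda)$ as a global integral over $Y$ of a function of the shape operator. I will then sandwich this integral above and below by classical convex-geometric quantities of $X$ --- area, total mean curvature, and diameter --- all three of which are monotonic under convex inclusion via the Brunn--Minkowski theory of mixed volumes. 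Obtaining a \emph{lower} bound for the relevant curvature integral is the most delicate point and will be the main task of the subsection on lower-bounding the curvature integral.

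Finally, given such a two-sided sandwich of $\Ru(Y,\lambda)$ by mixed-volume quantities of $X$, the inclusion $E \subset X \subset 4\cdot E$ compares those quantities to the corresponding explicit ones for $E(a,b)$, which are elementary functions of $a$ and $b$ comparable to $a+b$. Chaining these with the control on $\vol{X}$ and $\sys(Y,\lambda)$ delivers the desired two-sided bound $c \le \ru(Y,\lambda)\cdot \sys(Y,\lambda)^{1/2} \le C$, with constants $c,C$ depending only on the sandwich factor from John's theorem and on the constants arising in the curvature--rotation formula.
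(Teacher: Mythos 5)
Your high-level plan matches the paper's through the first two reductions: John's ellipsoid plus affine symplectic normalization to reduce to a body sandwiched between $E(a,b)$ and $4\cdot E(a,b)$, and the curvature--rotation formula of Proposition~\ref{prop:rotation_curvature_formula} feeding into Lemma~\ref{lem:Ru_alternate_formula} to reexpress $\Ru(Y,\lambda)$ in terms of the second fundamental form. The upper bound is then genuinely a mixed-volume argument: $\Ru(Y)\le\frac{3}{2\pi}\int_Y H\,\dvol_\sigma$ and total mean curvature is monotonic under convex inclusion, giving $\Ru(Y)\lesssim b\sim \vol{X}^{1/2}\sys(Y)^{-1/2}$, i.e.\ the upper inequality.

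For the lower bound, however, your plan as stated cannot close, and this is where the real work lies. Bounding $\int_Y S(I\nu,I\nu)\,\dvol_\sigma$ below by a rational combination of $\area(Y)$, $\diam(X)$, and $\int_Y H$ (as in Corollary~\ref{cor:SIv_lower_bound}, which is the natural output of your ``monotonic mixed-volume'' scheme) yields
\[
\int_Y S(I\nu,I\nu)\,\dvol_\sigma \;\gtrsim\; \frac{\area(Y)^2}{\diam(X)^2\cdot\int_Y H\,\dvol_\sigma}\;\sim\; \frac{(b a^{1/2})^2}{b\cdot b}\;=\;a,
\]
whereas the statement demands $\Ru(Y)\gtrsim b$, equivalently $\ru\cdot\sys^{1/2}\ge c$ rather than $\ru\cdot\sys^{-1/2}\ge c$. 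When $a/b$ is small these differ by an unbounded factor, so no chain of monotonic convex-geometric inequalities of the sort you describe can produce the correct exponent; the paper flags exactly this pitfall after Corollary~\ref{cor:SIv_lower_bound}. What actually saves the lower bound is a genuinely dynamical refinement (Lemma~\ref{lem:AT_bound_for_small_sys} and Corollary~\ref{cor:SIv_bound_for_small_sys}): when $a/b$ is small, one shows that the projection of $I\nu$ to the second $\C$-factor is $O((a/b)^{1/2})$ on a large subregion $\Sigma\subset Y$, so $I\nu$-trajectories of length $T=b^{1/2}$ stay inside balls of radius $\sim a^{1/2}$ and hence have time-averaged acceleration $A_T\gtrsim a^{-1/2}$ there (much better than the crude $\diam(X)^{-1}\sim b^{-1/2}$). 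Plugging this into Lemma~\ref{lem:acceleration_bound} upgrades the lower bound to $\int_Y S(I\nu,I\nu)\,\dvol_\sigma\gtrsim b$. The final proof is then a two-case argument: the crude estimate handles $a/b$ bounded away from $0$, and the refined acceleration estimate handles $a/b$ small. Your proposal, by confining itself to monotonic mixed-volume quantities, omits this essential case split and the dynamical estimate that makes it work.
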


\noindent The proof follows the outline discussed in the introduction. 

\vspace{3pt} 

We begin (\S \ref{subsec:standard_ellipsoids}) with a review of the geometry of standard ellipsoids $E(a,b)$ in $\C^4$, including a variant of John's theorem (Corollary \ref{cor:johns_ellipsoid}). We then present the key curvature-rotation formula (\S \ref{subsec:curvature_rotation}) and use it to bound the Ruelle invariant between two curvature integrals (Lemma \ref{lem:Ru_curvature_lowerbound}). We then prove several bounds for one of these curvature integrals in terms of diameter, area and total mean curvature (\S \ref{subsec:lower_bounding_curvature_integral}). We collect this analysis together in the final proof (\S \ref{subsec:proof_of_main_bound}).

\begin{notation} We will require the following notation throughout this section.

\begin{itemize}
	\item[(a)] $g$ is the standard metric on $\R^4$ with connection $\nabla$, and $\dvol_g = \frac{1}{2}\omega^2$ is the corresponding volume form. We also use $\langle u,v\rangle$ to denote the inner product of two vectors $u,v \in \R^4$.
	\vspace{6pt}
	\item[(b)] $\nu$ is the outward normal vector field to $Y$ and $\nu^*$ is the dual $1$-form with respect to $g$.
	\vspace{6pt}
	\item[(c)] $\sigma$ is the restriction of $g$ to $Y$ and $\dvol_\sigma$ is the corresponding metric volume form. Furthermore, $\on{area}(Y)$ denotes the surface area of $X$, i.e. the volume $\on{vol}_\sigma(Y)$ of $Y$ with respect to $\sigma$. Note that $\lambda \wedge d\lambda$ and $\dvol_\sigma$ are related (via the Liouville vector field $Z$ of $\R^4$) by
	\begin{equation}\label{eqn:volume_form_identity}
	\lambda \wedge d\lambda = \iota_Z(\frac{\omega^2}{2}|_Y) = \iota_Z(\dvol_g|_Y) =  \iota_Z(\nu^* \wedge \dvol_\sigma) = \langle Z,\nu\rangle \dvol_\sigma
	\end{equation}
	\item[(d)] $S$ is the second fundamental form of $Y$, i.e. the bilinear form given on any $u,w \in TY$ by
	\[S(u,w) := \langle \nabla_u \nu, w\rangle\]
	\item[(e)] $H$ is the mean curvature of $Y$. It is given by
	\[H := \frac{1}{3}\operatorname{trace}S\]
\end{itemize}
Note that, in this section, we will slightly abuse notation and use $\lambda$ to denote both the Liouville form $\iota_Z\omega$ and the contact form on $Y$ induced by restriction.
\end{notation}  

\subsection{Standard Ellipsoids} \label{subsec:standard_ellipsoids} Recall that a \emph{standard ellipsoid} $E(a_1,\dots,a_n) \subset \C^n$ with parameters $a_i > 0$ for $i = 1,\dots,n$ is defined as follows.
\begin{equation} \label{eqn:std_ellipsoid}
E(a_1,\dots,a_n) := \Big\{z = (z_i) \in \C^n \; : \; \sum_i \frac{\pi|z_i|^2}{a_i} \le 1\Big\}
\end{equation}
For example, $E(a) \subset \C$ is the disk of area $a$, and $E(a,\dots,a) \subset \C^n$ is the ball of radius $(a/\pi)^{1/2}$. 

\vspace{3pt}

We beginn this section with a discussion of the Riemannian and symplectic geometry of standard ellipsoids in $\C^2$. All of the relevant geometric quantities for this section can be computed explicitly in this setting. Let us record the outcome of these calculations.

\begin{lemma}[Ellipsoid Quantities] \label{lem:ellipsoid_quantities} Let $E = E(a,b)$ be a standard ellipsoid with $0 < a < b$. Then
\begin{itemize}
	\item[(a)] The diameter, surface area and volume of $E$ are given by
	\[
	\diam(E) = \frac{2}{\pi^{1/2}} \cdot b^{1/2} \qquad \area(\partial E) = \frac{4\pi^{1/2}}{3} \cdot \frac{b^2a^{1/2} - b^{1/2}a^2}{b - a} \qquad \vol{E} = \frac{ab}{2}
	\] 
	\item[(b)] The total mean curvature of $\partial E$ (i.e. the integral of the mean curvature over $\partial E$) is given by
	\[
	\int_{\partial E} H \cdot \dvol_\sigma = \frac{2\pi}{3} \cdot (b + a + \frac{ab}{b - a} \cdot \log(b/a)) 
	\]
	\item[(c)] The minimum action of a closed orbit on $\partial E$ and the systolic ratio of $\partial E$ are given by
	\[
	c(\partial E) = a \qquad \sys(\partial E) = \frac{a}{b} 
	\]
	\item[(d)] The Ruelle invariant of $\partial E$ is given by
	\[\Ru(\partial E) = a + b\]
\end{itemize}
\end{lemma}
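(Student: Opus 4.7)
All four parts follow from explicit calculations exploiting the $U(1)^2$ toric symmetry of $E(a,b)$. Writing $f(z) = \pi|z_1|^2/a + \pi|z_2|^2/b$ for the defining function, the natural parametrization of $\partial E$ is
\[
\psi(t,\theta_1,\theta_2) = \bigl((a/\pi)^{1/2}\sqrt{1-t}\,e^{i\theta_1},\; (b/\pi)^{1/2}\sqrt{t}\,e^{i\theta_2}\bigr),\quad t\in[0,1],\ \theta_j\in\R/2\pi\Z,
\]
exhibiting $\partial E$ as an $S^1\times S^1$-fibration over $[0,1]$ collapsing at the endpoints. The tangent vectors $\partial_t\psi,\partial_{\theta_1}\psi,\partial_{\theta_2}\psi$ are pairwise orthogonal, so the induced area form is the product of their norms, which simplifies to $\dvol_\sigma = \frac{\sqrt{ab}}{2\pi^{3/2}}\sqrt{b+(a-b)t}\,dt\wedge d\theta_1\wedge d\theta_2$. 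For part (a), the diameter is twice the longest semi-axis $(b/\pi)^{1/2}$, the volume follows from the standard formula $(\pi^2/2)r_1r_2r_3r_4$ for ellipsoid volume in $\R^4$, and integrating $\dvol_\sigma$ over $\theta_1,\theta_2,t$ (via the substitution $u = b+(a-b)t$) yields the stated surface area after elementary simplification.

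For part (b), I would compute $\nu = \nabla f/|\nabla f|$ and $S(u,w) = \langle\nabla_u\nu,w\rangle$ at a point $\psi(t,\theta_1,\theta_2)$. By $U(1)^2$-invariance, the mean curvature $H(t) = \frac{1}{3}\operatorname{tr} S$ depends only on $t$; a direct calculation in the orthonormal frame obtained by normalizing $\{\partial_t\psi,\partial_{\theta_1}\psi,\partial_{\theta_2}\psi\}$ expresses $H(t)$ as a rational function whose $\sqrt{b+(a-b)t}$ factor cancels against the corresponding factor in $\dvol_\sigma$, reducing $\int_{\partial E}H\,\dvol_\sigma$ to a single variable integral in $t$. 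The polynomial piece of the integrand gives the $(b+a)$ contribution, while a residual term of the form $1/\bigl(b+(a-b)t\bigr)$ integrates to the $\log(b/a)$ term.

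For part (c), writing $\lambda = \frac{1}{2}\sum_j(x_j\,dy_j - y_j\,dx_j)$ and using $f$ as a defining function, one reads off the Reeb vector field $R = (2\pi/a)\partial_{\theta_1} + (2\pi/b)\partial_{\theta_2}$. The only closed Reeb orbits on $\partial E$ lie either on the two coordinate circles (with actions $a$ and $b$) or, when $a/b\in\Q$, on interior tori $\{|z_1|^2 = r_1\} \cap \{|z_2|^2 = r_2\}$ with minimum period $\operatorname{lcm}(a,b) \ge b$. Hence $c(\partial E) = a$, and $\sys(\partial E) = a^2/\vol{\partial E,\lambda} = a^2/(2\vol{E}) = a/b$ using $\vol{Y,\lambda|_Y} = 2!\vol{X}$. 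Part (d) is Hutchings's computation in \cite{h2019}; alternatively, a self-contained derivation uses Lemma \ref{lem:Ru_alternate_formula}: with respect to the trivialization of $\xi$ extending to the bounded disk in $\R^4$, the linearized Reeb flow preserves the splitting of $\xi$ into the two complex lines tangent to the toric fibers and acts on each as $U(1)$-rotation, so the rotation density $\varrho_\tau$ depends only on $t$ and integrates against $\dvol_\sigma$ to $a+b$.

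The main obstacle is the second-fundamental-form computation in (b): one must carry out $\nabla\nu$ in the non-principal-axis orthonormal frame above and track the $\frac{1}{3}$ normalization of $H$ used in this section, in order to obtain exactly the mixture of polynomial and logarithmic terms in the stated closed form. The other parts are essentially routine, given the parametrization $\psi$ together with, respectively, classical ellipsoid geometry, the explicit Reeb field, and Hutchings's ellipsoid calculation.
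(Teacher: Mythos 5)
Your approach is sound and essentially the same as the paper's: a toric parametrization of $\partial E$, explicit computation of the induced area form, integration over the one-dimensional base, and scaling to reduce the general case (the paper first normalizes to $E(1,b)$, but this is cosmetic). Your parameter $t$ is the paper's $\mu_2/b$ in disguise, and your formula $\dvol_\sigma = \frac{\sqrt{ab}}{2\pi^{3/2}}\sqrt{b + (a-b)t}\,dt\wedge d\theta_1\wedge d\theta_2$ is correct; deriving it from pairwise orthogonality of the coordinate vector fields is a valid alternative to the paper's $\iota_\nu\dvol_g$. The one substantive difference is in (b): the paper computes $H$ via the level-set divergence identity
\[
H = \frac{1}{3|\nabla F|^3}\bigl(|\nabla F|^2\operatorname{tr}(\operatorname{Hess}_F) - \operatorname{Hess}_F(\nabla F,\nabla F)\bigr),
\]
which sidesteps the orthonormal-frame computation of $\nabla\nu$ that you flag as the main obstacle; since the trace of $S$ for a regular level hypersurface is just $\operatorname{div}(\nabla F/|\nabla F|)$, this identity is an elementary rewriting and would make your remaining calculation routine. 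For (c), you supply a self-contained reading-off of the Reeb field $R = (2\pi/a)\partial_{\theta_1} + (2\pi/b)\partial_{\theta_2}$ and the classification of closed orbits where the paper simply cites \cite{gh2018}; this is correct (the two binding circles have actions $a$ and $b$, and when $a/b\in\Q$ the interior tori yield periods $\ge b$, so $c(\partial E)=a$ and $\sys = a^2/(2\vol{E}) = a/b$). For (d) you cite \cite{h2019} exactly as the paper does.
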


\begin{proof} The Ruelle invariant is computed in \cite[Lem 2.1 and 2.2]{h2019}, while the minimum period of a closed orbit is computed in \cite[\S 2.1]{gh2018}. The diameter is immediate from (\ref{eqn:std_ellipsoid}). Thus, we will calculate the volume, surface area and total mean curvature. 

\vspace{3pt}

First assume that $E(a,b) = E(1,b)$ with $b \ge 1$. Let $z_j = x_j + iy_j$ be the standard coordinates on $\C^2 \simeq \R^4$. We will do all of our calculations in the following radial or toric coordinates.
\[r_i = |z_i| \qquad  \theta_i = \text{arg}(z_i) \qquad \mu_i = \pi r_i^2\]
The differential $d\mu_i$ and vector-field $\partial_{\mu_i}$ are given by
\[d\mu_i = 2\pi r_i dr_i \qquad \text{and}\qquad \partial_{\mu_i} = \frac{1}{2\pi r_i}\partial_{r_i}\]
In the $(\mu_1,\theta_1,\mu_2,\theta_2)$-coordinates, the standard metric $g$ on $\C^2$ is given by
\[\sum_i dr_i^2 + r_i^2 d\theta_i  =\sum_i \frac{1}{4\pi \mu_i} d\mu_i^2 + \frac{\mu_i}{\pi} d\theta_i^2\]
The ellipsoid $E(1,b)$ can be described as the sub-level set $F^{-1}(-\infty,1]$ of the map
\[F:\C^2 \to \R\quad F(z_1,z_2) := \pi|z_1|^2 + \frac{\pi|z_2|^2}{b} = \mu_1 + \frac{\mu_2}{b}\]
The gradient vector-field $\nabla F$ of $F$ with respect to $g$ is given by
\[\nabla F = 2\pi r_1 \partial_{r_1} + \frac{2\pi r_2}{b} \partial_{r_2} = 4\pi(\mu_1 \partial_{\mu_1} + \frac{\mu_2}{b} \partial_{\mu_2})\]
Note that the normal vector-field $\nu = \nabla F/|\nabla F|$ to $\partial E(1,b)$ can be calculated via this formula. Finally, the complement $U$ of $(\C \times 0) \cup (0 \times \C)$ in $E(1,b)$ admits the following parametrization. 
\[
\phi:(0,\infty) \times S^1 \times S^1 \to \C^4 \qquad \phi(\mu_1,\theta_1,\theta_2) = (\mu_1,\theta_1,b(1 - \mu_1),\theta_2)
\]

Now we calculate the desired quantities for $E(1,b)$. The metric volume form $\on{dvol}_g$ is given by
\[\on{dvol}_g =  d(\frac{1}{2\pi} \mu_1) \wedge d\theta_1 \wedge d(\frac{1}{2\pi} \mu_2) \wedge d\theta_2\]
Therefore, the volume may be calculated as the integral
\[
\on{vol}(E(1,b)) = \int_{E(1,b)} \frac{1}{4\pi^2} \cdot d\mu_1 \wedge d\theta_1 \wedge d\mu_2 \wedge d\theta_2 = \int_0^1 \int_0^{b(1 - \mu_1)} d\mu_1 \wedge d\mu_2 = \frac{b}{2}
\]
The area form $\on{dvol}_\sigma$ on $\partial E(1,b)$ is given by $\iota_\nu \on{dvol}_g$, which is simply
\[\on{dvol}_\sigma = \iota_\nu \on{dvol}_g = \frac{1}{|\nabla F|} \cdot \iota_{\nabla F} \on{dvol}_g\]
\[= \frac{1}{\sqrt{4\pi(\mu_1 + \mu_2/b)}} \cdot (\frac{4\pi\mu_1}{4\pi^2} \cdot d\theta_1 \wedge d\mu_2 \wedge d\theta_2 + \frac{4\pi\mu_2}{4\pi^2 b} \cdot d\mu_1 \wedge d\theta_1 \wedge d\theta_2)\]
The pullback of $\on{dvol}_\sigma$ via the map $\phi$ is given by
\[
\phi^*\on{dvol}_\sigma = \frac{1}{2\pi^{3/2}}(\mu_1 + \frac{b(1 - \mu_1)}{b^2})^{-1/2} \cdot (\mu_1 \cdot d\theta_1 \wedge d(b(1 - \mu_1)) \wedge d\theta_2 + \frac{b}{b}(1 - \mu_1) \cdot d\mu_1 \wedge d\theta_1 \wedge d\theta_2)
\]
\[
= \frac{b^{1/2}}{2\pi^{3/2}} \cdot (1 + (b-1)\mu_1)^{1/2} \cdot d\mu_1 \wedge d\theta_1 \wedge d\theta_2 
\]
Computing the surface area as the integral of $\phi^*\on{dvol}_\sigma$, we have
\[
\area(\partial E(1,b)) = \frac{b^{1/2}}{2\pi^{3/2}} \cdot \int_0^1\int_0^{2\pi}\int_0^{2\pi}  (1 + (b-1)\mu_1)^{1/2} \cdot d\theta_1 \wedge d\theta_2 \wedge d\mu_1 
\]
\[
= \frac{b^{1/2}}{2\pi^{3/2}} \cdot 4\pi^2 \cdot \int_0^1 (1 + (b-1)\mu_1)^{1/2} d\mu_1 = (4\pi b)^{1/2} \cdot \frac{2}{3(b - 1)} \cdot (1 + (b-1)\mu_1)^{3/2}|_0^1\]
\[ = \frac{4\pi^{1/2}}{3} \cdot \frac{b^2 - b^{1/2}}{b - 1}
\]
Finally, the mean curvature $H$ is given by
\[
H = \frac{1}{3|\nabla F|^3} \cdot (|\nabla F|^2 \cdot\on{tr}(\on{Hess}_F) - \on{Hess}_F(\nabla F,\nabla F)) =
\]
\[
\frac{4\pi(\mu_1 + \frac{\mu_2}{b^2}) \cdot 4\pi(1 + \frac{1}{b}) - 8\pi^2(\mu_1 + \frac{\mu_2}{b^3})}{3 \cdot (4\pi)^{3/2} \cdot (\mu_1 + \frac{\mu_2}{b^2})^{3/2}} = \frac{\sqrt{\pi}}{3} \cdot \frac{(1 + \frac{2}{b}) \cdot \mu_1 + (\frac{2}{b^2} + \frac{1}{b^3})\mu_2}{(\mu_1 + \frac{\mu_2}{b^2})^{3/2}} 
\]
The pullback of $H$ by $\phi$ is given by
\[
\phi^*H = \frac{\sqrt{\pi}}{3} \cdot \frac{(1 + \frac{2}{b}) \cdot \mu_1 + (\frac{2}{b^2} + \frac{1}{b^3}) \cdot b(1 - \mu_1)}{(\mu_1 + \frac{b(1 - \mu_1)}{b^2})^{3/2}}\]
\[
= \frac{\sqrt{\pi}}{3} \cdot \frac{\frac{2b+1}{b^2} + \frac{b^2 - 1}{b^2} \mu_1}{(1 + (1 - \frac{1}{b})\mu_1)^{3/2}} = \frac{\pi^{1/2}}{3 b^{1/2}} \cdot \frac{(2b+1) + (b^2 - 1) \mu_1}{(1 + (b- 1)\mu_1)^{3/2}} 
\]
Computing the mean curvature as the integral of $\phi^*(H \cdot \on{dvol}_\sigma)$, we have
\[
\int_{\partial E(1,b)} H \on{dvol}_\sigma = \frac{1}{6\pi} \cdot \int_0^1\int_0^{2\pi}\int_0^{2\pi} \frac{(2b + 1) + (b^2 - 1)\mu_1}{1 + (b-1)\mu_1} \cdot d\theta_1 \wedge d\theta_2 \wedge d\mu_1 
\]
\[
=  \frac{2\pi}{3} \cdot \int_0^1\frac{(2b + 1) + (b^2 - 1)\mu_1}{1 + (b-1)\mu_1} \cdot d\mu_1 = \frac{2\pi}{3} (b + 1 + \frac{b}{b-1} \cdot \log(b))
\]

To deduce the general case of the computation from this special case note that, if $U$ is any smooth domain, then
\begin{equation} \label{eqn:geometric_scaling}
\vol{\lambda \cdot U} = \lambda^4 \cdot \vol{U} \quad \area(\lambda \cdot U) = \lambda^3 \cdot \area(U) \quad \int_{\lambda \cdot \partial U} H \cdot \on{dvol}_\sigma = \lambda^2 \cdot \int_{\partial U} H \cdot \on{dvol}_\sigma
\end{equation}
Any ellipsoid $E(a,b)$ can be scale so to an ellipsoid with $a = 1$, since
\[
\lambda \cdot E(a,b) = E(\lambda^2 a,\lambda^2 b) \quad\text{and thus}\quad E(a,b) = a^{1/2} \cdot E(1,b/a)
\]
The general case now follows from the special case and the scaling properties (\ref{eqn:geometric_scaling}). \end{proof}

Any convex boundary in $\R^{2n}$ can be sandwiched between a standard ellipsoid and a scaling of that ellipsoid by a factor of $2n$, after the application of an affine symplectomorphism. To see this, first recall the following well-known result of John. 

\begin{thm}[John Ellipsoid] \cite{j1948} \label{thm:john_ellipsoid} Let $K \subset \R^n$ be a convex domain. Then there exists a unique ellipsoid $E$ of maximal volume in $K$. Furthermore, if $c \in X$ is the center of $E$ then
\[E \subset K \subset c + n(E - c)\]
\end{thm}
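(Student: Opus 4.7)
The plan is to split the theorem into two separate parts: the existence and uniqueness of the maximal-volume inscribed ellipsoid $E$, and the containment inclusion $K \subset c + n(E - c)$. These are handled by different techniques, the first being a compactness/concavity argument and the second a first-order optimality analysis.

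For existence and uniqueness, I would parametrize ellipsoids in $\R^n$ by pairs $(p, M)$ where $p \in \R^n$ is the center and $M$ is a symmetric positive-definite matrix, writing $E_{p,M} = \{p + Mu : |u| \le 1\}$. Then $\vol{E_{p,M}} = \omega_n \det{M}$, where $\omega_n$ is the volume of the unit ball. Since $K$ is bounded, the set of admissible $(p,M)$ with $E_{p,M} \subset K$ and $\det{M}$ bounded below by that of any fixed inscribed ellipsoid is closed and bounded in parameter space, hence compact; continuity of $\det{M}$ then yields a maximum. For uniqueness, I would appeal to Minkowski's determinant inequality, which states that $M \mapsto \det{M}^{1/n}$ is strictly concave on the cone of positive-definite matrices. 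If $E_{p_1, M_1}$ and $E_{p_2, M_2}$ were both optimal, the averaged ellipsoid $E_{(p_1+p_2)/2,\, (M_1+M_2)/2}$ would lie in $K$ by convexity of $K$ and would have strictly greater volume unless $(p_1, M_1) = (p_2, M_2)$, a contradiction.

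For the containment, after an affine volume-ratio-preserving transformation, I would reduce to the case where $E$ is the unit ball $B$ centered at the origin, and aim to prove $K \subset nB$. The strategy is to apply the Fritz John first-order optimality conditions at the critical point $(p, M) = (0, I)$ of the problem of maximizing $\det{M}$ subject to $E_{p,M} \subset K$. These conditions yield finitely many contact points $u_1, \ldots, u_m \in \partial B \cap \partial K$ and positive weights $\lambda_i$ satisfying the identity decomposition $\sum_i \lambda_i u_i u_i^T = I$ together with the centering condition $\sum_i \lambda_i u_i = 0$. Taking traces gives $\sum_i \lambda_i = n$. At each contact point $u_i$, since $B \subset K$ and they meet tangentially at $u_i$, the tangent hyperplane to $B$ at $u_i$ must be a supporting hyperplane of $K$, so $K$ lies in the half-space $\{x : \langle u_i, x\rangle \le 1\}$. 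Combining these half-space bounds with the Fritz John identities then yields $\|x\| \le n$ for every $x \in K$, which is exactly the desired inclusion.

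The main obstacle is twofold. First, rigorously establishing the Fritz John conditions requires some care in the convex-geometric setting, because the constraint set for $(p, M)$ is defined by the continuum of inequalities $\{u \in K \text{ for every } u \in E_{p,M}\}$; one must verify that the optimum is supported by a finite subset of active constraints. Second, the final algebraic step of extracting $\|x\| \le n$ from the John identities plus the hyperplane constraints is delicate: the naive calculation using $\|x\|^2 = \sum_i \lambda_i \langle u_i, x\rangle^2$ alone does not close, and one must exploit the balance condition $\sum_i \lambda_i u_i = 0$ together with $\sum_i \lambda_i = n$ to control the negative tail of the values $\langle u_i, x\rangle$. An alternative route that sidesteps this estimate is a contradiction argument: if some $x \in K$ had $\|x\| > n$, then $\conv{B \cup \{x\}} \subset K$ can be shown to contain an ellipsoid of volume strictly greater than that of $B$, contradicting optimality.
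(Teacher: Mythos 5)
The paper does not prove this theorem. It states it as a classical result with a citation to John's 1948 paper and uses it as a black box in the proof of Corollary \ref{cor:johns_ellipsoid}. There is therefore no internal argument to compare against; the review below assesses your sketch on its own terms.

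Your outline is the standard modern proof (compactness for existence, log-concavity of the determinant for uniqueness, Fritz John optimality conditions and the contact-point decomposition of the identity for the containment), and most of it is correct. Two points deserve attention. First, the uniqueness argument as stated has a gap: if $M_1 = M_2$ but $p_1 \neq p_2$, the averaged ellipsoid $E_{(p_1+p_2)/2,\,(M_1+M_2)/2}$ has the \emph{same} volume, not a strictly greater one, so Minkowski's inequality gives no contradiction. The standard fix is to handle this case separately: when two translates of the same ellipsoid lie in $K$, their convex hull contains an ellipsoid obtained by translating to the midpoint and slightly dilating in the direction $p_1 - p_2$, and that ellipsoid has strictly larger volume. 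Second, the ``delicate'' algebraic step you flag in the containment argument does close, and it is worth knowing the exact mechanism: set $r = \|x\|$ and $t_i = \langle u_i, x\rangle$, and note $t_i \in [-r,1]$ since $|t_i| \le \|u_i\|\,\|x\| = r$ and $t_i \le 1$ from the supporting half-space; then $(1-t_i)(t_i + r) \ge 0$ for each $i$, and summing $\lambda_i(1-t_i)(t_i + r) \ge 0$ while using $\sum\lambda_i = n$, $\sum\lambda_i t_i = 0$, $\sum\lambda_i t_i^2 = r^2$ gives $rn - r^2 \ge 0$, i.e.\ $r \le n$. This is precisely the use of the balance and trace conditions you anticipated needing. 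The alternative contradiction route you mention (inscribing a larger ellipsoid in $\conv(B\cup\{x\})$ when $\|x\|>n$) is also valid but requires a nontrivial computation that you have not supplied.
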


\noindent Any ellipsoid $E$ is carried to a standard ellipsoid $E(a,b)$ by some affine symplectomorphism $T$. Furthermore, note that we have the following elementary result, which can be demonstrated using a Moser argument. 

\begin{lemma} \label{lem:strict_contactomorphism} Let $\phi:(Y,\lambda) \to (Y',\lambda')$ be a diffeomorphism such that $\phi^*\lambda' = \lambda + df$. Then $\phi$ is isotopic to a strict contactomorphism.\end{lemma}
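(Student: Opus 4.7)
The plan is to apply a Moser trick to the straight-line interpolation $\lambda_t := \lambda + t\,df$, $t\in[0,1]$, which runs from $\lambda_0=\lambda$ to $\lambda_1=\phi^*\lambda'$. Suppose we can produce an isotopy $\psi_t:Y\to Y$ with $\psi_0=\on{id}$ and $\psi_t^*\lambda_t=\lambda$ for all $t$. Then $\phi\circ\psi_1$ is a strict contactomorphism, since
\[(\phi\circ\psi_1)^*\lambda' = \psi_1^*(\phi^*\lambda') = \psi_1^*\lambda_1 = \lambda,\]
and the path $t\mapsto\phi\circ\psi_t$ is the required isotopy from $\phi$ to this strict contactomorphism.

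The first substantive step, and the only place something could go wrong, is to verify that each $\lambda_t$ is itself a contact form. Since $d\lambda_t=d\lambda$ is independent of $t$, we have
\[\lambda_t\wedge d\lambda_t \;=\; (\lambda+tdf)\wedge d\lambda \;=\; (1-t)\,\lambda\wedge d\lambda \;+\; t\,\phi^*(\lambda'\wedge d\lambda').\]
Both summands are volume forms on $Y$, and since $\phi$ is orientation-preserving with respect to the contact orientations (which is automatic in the applications of interest, where $\phi$ is induced by an affine symplectomorphism of $\R^4$), they induce the same orientation pointwise. Thus the right-hand side, being a convex combination of two positive volume forms, is nowhere vanishing for every $t\in[0,1]$.

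For the Moser step, one seeks a time-dependent vector field $X_t$ whose flow $\psi_t$ satisfies the desired identity; differentiating $\psi_t^*\lambda_t=\lambda$ shows that one needs $\mathcal{L}_{X_t}\lambda_t = -\dot\lambda_t = -df$. The natural ansatz is $X_t := -f\,R_t$, where $R_t$ denotes the Reeb vector field of $\lambda_t$. Then $\iota_{X_t}\lambda_t=-f$ and $\iota_{X_t}d\lambda_t = -f\,\iota_{R_t}d\lambda_t = 0$, so Cartan's formula gives $\mathcal{L}_{X_t}\lambda_t = d(-f) + 0 = -df$, as required. By compactness of $Y$ the flow $\psi_t$ exists on $[0,1]$, and a direct computation
\[\tfrac{d}{dt}\bigl(\psi_t^*\lambda_t\bigr) \;=\; \psi_t^*\bigl(df + \mathcal{L}_{X_t}\lambda_t\bigr) \;=\; 0\]
integrates to $\psi_1^*\lambda_1=\lambda$, completing the argument. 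The main (and really the only) obstacle is the contact-form check in the second paragraph; granted that, the rest is a standard application of Cartan's magic formula.
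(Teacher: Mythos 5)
Your proposal is the standard Moser isotopy and matches the approach the paper alludes to (the paper only says the lemma ``can be demonstrated using a Moser argument'' without giving details). The cocycle computation $\mathcal{L}_{X_t}\lambda_t = -df$ with the ansatz $X_t = -f R_t$ is correct, and the identity
\[
\lambda_t \wedge (d\lambda_t)^{n-1} = (1-t)\,\lambda\wedge(d\lambda)^{n-1} + t\,\phi^*\bigl(\lambda'\wedge(d\lambda')^{n-1}\bigr)
\]
is also correct (you write the $n=2$ case, but the computation is identical in general).

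The one genuine gap is the verification that $\lambda_t$ is a contact form for all $t$. You correctly identify that the convex-combination argument requires $\lambda\wedge d\lambda$ and $\phi^*(\lambda'\wedge d\lambda')$ to induce the same orientation pointwise, but rather than proving this you appeal to the ``applications of interest.'' Since the lemma is stated for a general diffeomorphism $\phi$ with $\phi^*\lambda'=\lambda+df$, this step should be proved, not assumed --- and in fact it follows from the hypotheses on a closed connected $Y$ by a Stokes argument, with no need to restrict attention to the application. Namely, $\lambda_1\wedge(d\lambda_1)^{n-1} - \lambda_0\wedge(d\lambda_0)^{n-1} = df\wedge(d\lambda_0)^{n-1} = d\bigl(f\,(d\lambda_0)^{n-1}\bigr)$ is exact, so the two volume forms have equal total integral against any fixed orientation of $Y$. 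Both are nowhere vanishing, so on a connected manifold each has a constant sign relative to the chosen orientation; if those signs disagreed, the two integrals would have opposite signs, contradicting their equality and the positivity of the contact volume of $(Y,\lambda_0)$. Hence the two top-forms induce the same orientation pointwise, the interpolation is nowhere vanishing, and your Moser step goes through. With that substituted for the appeal to the application, the argument is complete.
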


\noindent Since $\R^{2n}$ is contractible, $T^*\lambda = \lambda + df$ automatically on $\R^{2n}$. Thus, $T$ carries any star-shaped hypersurface $Y = \partial X$ to a strictly contactomorphic $T(Y)$ by Lemma \ref{lem:strict_contactomorphism}, and we conclude the following result.

\begin{cor} \label{cor:johns_ellipsoid} Let $X \subset \R^{2n}$ be a convex star-shaped domain with boundary $Y$. Then $Y$ is strictly contactomorphic to the boundary $\partial K$ of a convex domain $K$ with $E(a_1,\dots,a_n) \subset K \subset 4 \cdot E(a_1,\dots,a_n)$.
\end{cor}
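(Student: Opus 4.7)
The strategy is to combine John's ellipsoid theorem with symplectic linear algebra and Lemma \ref{lem:strict_contactomorphism}, all in three steps.

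First, apply Theorem \ref{thm:john_ellipsoid} to the convex body $X \subset \R^{2n}$ to obtain its maximal-volume inscribed ellipsoid $E$, centered at some point $c$, satisfying $E \subset X \subset c + 2n(E - c)$. After composing with the translation $z \mapsto z - c$, which is an affine symplectomorphism of $(\R^{2n},\omega)$, I may assume that the John ellipsoid is centered at the origin, so that $E \subset X \subset 2n \cdot E$.

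Second, I would invoke the standard symplectic normal form for ellipsoids centered at the origin: since $E$ is defined by a positive definite quadratic form $Q$ on $\R^{2n}$, the linear symplectic group $\on{Sp}(2n,\R)$ acts transitively (via Williamson's theorem applied to the symmetric form $Q$) on the moduli of such ellipsoids up to the choice of symplectic eigenvalues $a_1,\dots,a_n$. Hence there is a linear symplectomorphism $A \in \on{Sp}(2n,\R)$ with $A(E) = E(a_1,\dots,a_n)$ for some $a_1,\dots,a_n > 0$. Since $A$ is linear, the inclusion $E \subset X \subset 2n \cdot E$ is carried to
\[
E(a_1,\dots,a_n) \;\subset\; A(X) \;\subset\; 2n \cdot E(a_1,\dots,a_n).
\]
Setting $K := A(X)$ (composed with the earlier translation), $K$ is convex and, in the dimension $2n = 4$ of interest to the paper, is sandwiched between $E(a_1,\dots,a_n)$ and $4 \cdot E(a_1,\dots,a_n)$ as stated.

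Third, I need to promote the affine symplectomorphism $T$ (the composition of the translation and $A$) to a strict contactomorphism between the contact boundaries. Because $T$ is a symplectomorphism of $\R^{2n}$, the $1$-form $T^*\lambda - \lambda$ is closed, hence exact on the contractible space $\R^{2n}$: write $T^*\lambda = \lambda + df$ for some smooth function $f$. Restricting to $Y$, the diffeomorphism $T|_Y : (Y,\lambda|_Y) \to (\partial K, \lambda|_{\partial K})$ satisfies the hypothesis of Lemma \ref{lem:strict_contactomorphism}, and therefore $Y$ is isotopic to $\partial K$ through a strict contactomorphism, completing the proof.

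The only nontrivial input is the symplectic diagonalization of a centered ellipsoid, which is classical; the rest is a direct assembly of Theorem \ref{thm:john_ellipsoid} and Lemma \ref{lem:strict_contactomorphism}. I expect no genuine obstacle beyond verifying that the translation and linear symplectic map can indeed be combined so that the final sandwich holds with the exact constant $2n$ (equal to $4$ when $n=2$).
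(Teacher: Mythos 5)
Your proof is correct and follows essentially the same route as the paper: John's theorem to get the sandwiched ellipsoid, an affine symplectomorphism (translation plus linear symplectic normalization of the ellipsoid via Williamson) to put it in standard form, and Lemma \ref{lem:strict_contactomorphism} applied using $T^*\lambda - \lambda = df$ on the contractible $\R^{2n}$ to upgrade the affine map to a strict contactomorphism. You have merely spelled out the symplectic diagonalization step (which the paper compresses into the sentence ``Any ellipsoid $E$ is carried to a standard ellipsoid by some affine symplectomorphism'') and correctly flagged that John's theorem really gives the factor $2n$, which equals $4$ only in the paper's case of interest $n=2$.
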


When a convex domain in $\R^4$ is squeezed between an ellipsoid and its scaling, we can estimate many important geometric quantities of $X$ in terms of the ellipsoid itself.

\begin{lemma} \label{lem:quantity_sandwich} Let $X \subset \R^4$ be a convex domain with smooth boundary $Y$ such that
\begin{equation} \label{eqn:ellipsoid_sandwich_property} E(a,b) \subset X \subset c \cdot E(a,b) \qquad \text{ for some }b \ge a > 0 \text{ and }c \ge 0\end{equation}
Then there is a constant $C > 0$ dependent only on $c$ such that
\begin{equation} \label{eqn:quantity_sandwich_a}
b^{1/2} \le \diam(X) \le C \cdot b^{1/2} \qquad ba^{1/2} \le \area(Y) \le C \cdot ba^{1/2}
\end{equation}
\begin{equation} \label{eqn:quantity_sandwich_b}
b \le \int_Y H \cdot \dvol_\sigma \le C \cdot b \qquad \frac{ab}{2} \le \vol{X} \le C \cdot ab
\end{equation}
\begin{equation} \label{eqn:quantity_sandwich_c}
a \le c(X) \le C \cdot a \qquad C^{-1} \cdot \frac{a}{b} \le \sys(Y) \le C \cdot \frac{a}{b}
\end{equation}
\end{lemma}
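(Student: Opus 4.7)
The plan is to observe that each of the six quantities appearing in (\ref{eqn:quantity_sandwich_a})--(\ref{eqn:quantity_sandwich_c}) is monotonic under inclusion of convex bodies, and then to use Lemma \ref{lem:ellipsoid_quantities} together with the scaling identity $c\cdot E(a,b)=E(c^2 a, c^2 b)$ to pinch each quantity of $X$ between two computable ellipsoid quantities. Since $E(a,b)\subset X\subset c\cdot E(a,b)$, monotonicity of a given quantity $Q$ yields $Q(E(a,b))\le Q(X)\le Q(c\cdot E(a,b))$, and the latter equals $c^k \cdot Q(E(a,b))$ for the appropriate homogeneity degree $k\in\{1,2,3,4\}$ under the scaling laws for diameter, total mean curvature, surface area, and volume respectively.

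For the four Riemannian quantities, monotonicity of diameter and volume is automatic. Monotonicity of surface area and of $\int_Y H\,\dvol_\sigma$ under inclusion of convex bodies is classical: both are (constant multiples of) intrinsic volumes, and all intrinsic volumes are monotonic by the Brunn-Minkowski theory of mixed volumes. For the dynamical quantities, the minimum action $c(X)$ coincides with the Ekeland-Hofer-Zehnder capacity on convex domains by \cite{chlrs2005}, which is manifestly monotonic; the systolic ratio bound then follows algebraically from the bounds on $c(X)$ and $\vol{X}$, using $\sys(Y,\lambda)=c(X)^2/(2\vol{X})$ (a consequence of $\vol{Y,\lambda|_Y}=2\vol{X}$ combined with the definition of the systolic ratio in dimension $n=2$).

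What remains is the elementary task of verifying that the explicit ellipsoid formulas from Lemma \ref{lem:ellipsoid_quantities} are comparable, up to absolute constants, to the target quantities $b^{1/2}$, $ba^{1/2}$, $b$, $ab$, and $a$ under the hypothesis $a\le b$. The only non-obvious cases are surface area and total mean curvature. For the surface area, the factorization
\[
\frac{b^2 a^{1/2}-b^{1/2} a^2}{b-a} = \frac{(ab)^{1/2}\bigl(b+(ab)^{1/2}+a\bigr)}{b^{1/2}+a^{1/2}}
\]
makes it clear that, for $a\le b$, both sides are comparable to $a^{1/2}b$ up to factors in $[\tfrac12,3]$. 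For the total mean curvature, the elementary bound $\log(b/a)/(b/a-1)\in(0,1]$ for $b/a\ge 1$ shows $\frac{ab}{b-a}\log(b/a)\le b$, so that the expression $\tfrac{2\pi}{3}(b+a+\tfrac{ab}{b-a}\log(b/a))$ lies in $[\tfrac{2\pi}{3}b,\,2\pi b]$. The main conceptual subtlety is the monotonicity of total mean curvature under inclusion of convex bodies (a specialized input from the theory of mixed volumes); once this is granted, the rest of the argument is bookkeeping with the explicit formulas of Lemma \ref{lem:ellipsoid_quantities} and absorbing $c$-dependent constants.
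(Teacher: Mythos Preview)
Your proposal is correct and follows essentially the same approach as the paper: monotonicity of each quantity under inclusion of convex bodies (via intrinsic volumes for area and total mean curvature, via the EHZ capacity for $c(X)$), combined with the scaling identity $c\cdot E(a,b)=E(c^2a,c^2b)$ and elementary two-sided bounds on the ellipsoid formulas of Lemma~\ref{lem:ellipsoid_quantities}. The only cosmetic differences are that you verify the area bound via the factorization $\frac{(ab)^{1/2}(b+(ab)^{1/2}+a)}{b^{1/2}+a^{1/2}}$ and the mean-curvature bound via $\log x/(x-1)\le 1$, whereas the paper states the corresponding inequalities directly.
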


\begin{remark} The optimal constants in the estimates (\ref{eqn:quantity_sandwich_a})-(\ref{eqn:quantity_sandwich_c}) are not important to the arguments below. They could be explicitly computed in the following proof. \end{remark}

\begin{proof} First, note that $c \cdot E(a,b)$ is also a standard ellipsoid. More precisely, we know that
\[c \cdot E(a,b) = E(c^2 \cdot a, c^2 \cdot b)\]
We now derive the desired estimates from Lemma \ref{lem:ellipsoid_quantities} and the monotonicity of the relevant quantities under inclusion of convex domains. 

\vspace{3pt}

The diameter $\diam(X)$ and volume $\vol{X}$ are monotonic with respect to inclusion of arbitrary open subsets, and so from Lemma \ref{lem:ellipsoid_quantities}(a) we acquire
\[b^{1/2} \le \on{diam}(X) \le \frac{2c}{\pi^{1/2}} \cdot b^{1/2} \qquad\text{and}\qquad \frac{ab}{2} \le \vol{X} \le \frac{c^4}{2} \cdot ab\]
The surface area and total mean curvature are monotonic with respect to inclusion of convex domains, since
\[\int_Y H \dvol_\sigma = 4 \cdot V_2(X) \qquad\text{and}\qquad \area(Y) = 4 \cdot V_3(X)\]
Here $V_i(X)$ is the \emph{$i$th cross-sectional measure} \cite[\S 19.3]{bz1980}, which is monotonic with respect to inclusions of convex domains by \cite[p.138, Equation 13]{bz1980}. Furthermore, when $0 < a < b$ (and in the limit as $b \to a$), one may verify that
\begin{equation}\label{eqn:quantity_sandwich_ab_estimates_a}
ba^{1/2} \le \frac{b^2a^{1/2} - b^{1/2}a^2}{b - a} \le \frac{3}{2} \cdot ba^{1/2} \qquad\text{and}\qquad b \le b + a + \frac{ab}{b - a} \cdot \log(b/a) \le 3b\end{equation}
Thus, by applying the monotonicity property, (\ref{eqn:quantity_sandwich_ab_estimates_a}) and Lemma \ref{lem:ellipsoid_quantities}(a)-(b), we have
\[
\frac{4\pi^{1/2}}{3} \cdot ba^{1/2} \le \area(Y) \le \frac{4\pi^{1/2}}{3}c^3 \cdot (\frac{3}{2}ba^{1/2}) \qquad\text{and}\qquad \frac{2\pi}{3} \cdot b \le \int_Y H \cdot \dvol_\sigma \le \frac{2\pi}{3}c^2 \cdot 3b\] 
Finally, the minimum orbit length $c(X)$ coincides with the 1st Hofer-Zehnder capacity $c^{HZ}_1(X)$ on convex domains, and is thus monotonic with respect to symplectic embeddings. Thus by Lemma \ref{lem:ellipsoid_quantities}(a) and (c), we have
\[
a \le c(X) \le c^2 \cdot a \qquad\text{and}\qquad c^{-4} \cdot \frac{a}{b} \le \frac{c(X)^2}{2\vol{X}} = \sys(Y) \le c^4 \cdot \frac{a}{b} 
\]
This concludes the proof, after choosing $C$ larger than the constants appearing above. \end{proof} 

\subsection{Curvature-Rotation Formula} \label{subsec:curvature_rotation} Identify $\R^4$ with the quaternions $\mathbb{H}^1$ via
\[\R^4\ni (x_1,y_1,x_2,y_2)\mapsto x_1 + y_1 I + x_2 J + y_2 K\in \mathbb{H}^1\]
This equips $\R^4$ with a triple of complex structures.
\[I:T\R^4 \to T\R^4 \qquad J:T\R^4 \to T\R^4 \qquad K:T\R^4 \to T\R^4\]
The Reeb vector-field $R$ of the contact form on a star-shaped hypersurface $Y$ is parallel to $I$ applied to the normal vector-field $\nu$ to $Y$. Precisely, we have
\begin{equation} \label{eqn:Reeb_vector_vs_Inu} R = \frac{I\nu}{\langle Z,\nu\rangle}\end{equation}
We can utilize these structures to formulate an explicit representative of the standard homotopy class of trivialization $\tau:\xi \simeq \R^2$ on the contact structure $\xi$ on the boundary $Y$ of the convex star-shaped domain $X$ (or more generally, on any star-shaped boundary).
\begin{definition} \label{def:quaternion_trivialization} The \emph{quaternion trivialization} $\tau:\xi \simeq Y \times \C$ is the symplectic trivialization given by
\[
\tau:\xi \xrightarrow{\pi} Q \xrightarrow{q^{-1}} Y \times \C
\]
Here $Q \subset TY$ is the symplectic sub-bundle $\text{span}(J\nu,K\nu)$, $\pi:\xi \to Q$ is the projection map from $\xi$ to $Q$ along the Reeb direction, and $q:Y \times \C \to Q$ is the bundle map given on $z = a + ib$ by
\begin{equation}
q_p(z) := z \cdot J\nu_p = (a + Ib) \cdot J\nu_p = aJ\nu_p + bK\nu_p\end{equation}
\end{definition}

The key property of the quaternion trivialization is the following relation of the rotation density (see Definition \ref{def:rotation_density}) to extrinsic curvature, originally due to Ragazzo-Salom\~{a}o (c.f. \cite{rs2006}).

\begin{prop}[Curvature-Rotation] \label{prop:rotation_curvature_formula} \cite[Prop 4.7]{ch2020} Let $\tau$ be the quaternion trivialization on the contact structure $\xi$ of $Y \subset \R^4$. Then
\begin{equation} \label{eqn:curvature_rotation}
\varrho_\tau(y,s) = \frac{1}{2\pi \cdot \langle Z_y,\nu_y\rangle}(S(I\nu_y,I\nu_y) + S(e^{2\pi i s} \cdot J\nu_y, e^{2\pi i s} \cdot J\nu_y))\end{equation}
\end{prop}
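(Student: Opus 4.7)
The plan is to unfold the definition of $\varrho_\tau$ as an angular velocity of the linearized Reeb flow and compute it directly using the relation $R = I\nu/\langle Z,\nu\rangle$ together with the shape-operator identity $\langle \nabla_u\nu, w\rangle = S(u,w)$. First I would observe that $Q_y = \operatorname{span}(J\nu_y, K\nu_y) \subset T_yY$ is the metric-orthogonal complement of $I\nu_y$, and hence of $R_y$, inside $T_yY$. Consequently the projection $\pi:\xi \to Q$ along $R$ is well-defined, and since $\iota_R d\lambda = 0$ forces the $R$-direction to be $\omega$-orthogonal to all of $TY$, this projection is a symplectic isomorphism between $(\xi, d\lambda|_\xi)$ and $(Q, \omega|_Q)$. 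Combining $\pi$ with the frame identification $q:Y\times\C \to Q$, the trivialized linearized Reeb flow $\Phi_\tau(t,y)$ is identified with the matrix of $\pi\circ d\phi_t\circ\pi^{-1}: Q_y \to Q_{\phi_t(y)}$ in the moving frame $(J\nu, K\nu)$, so that $\varrho_\tau(y,s)$ is the angular velocity at $t=0$ of $\pi(d\phi_t(\pi^{-1}(e^{2\pi is} J\nu_y)))$ measured in this frame.

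Given this reformulation, the standard infinitesimal-rotation formula expresses $\varrho_\tau(y,s)$ as $\tfrac{1}{2\pi}\omega(v_s, \dot A(0) v_s)$ with $v_s := e^{2\pi is} J\nu_y \in Q_y$, where $\dot A(0) v_s = \tfrac{d}{dt}\big|_{t=0} \pi(d\phi_t(\pi^{-1}(v_s)))$. Using $\tfrac{d}{dt}\big|_{t=0} d\phi_t(u) = \nabla_u R$ for $u \in T_yY$ and expanding $\nabla_{\pi^{-1}(v_s)}(I\nu/\langle Z,\nu\rangle)$ by the Leibniz rule produces two geometrically distinct contributions. The first comes from differentiating $I\nu$ in the direction of $v_s$: by $\langle \nabla_u\nu,w\rangle = S(u,w)$ and the quaternion identities $IJ=K$, $JK=I$, this simplifies to the angle-dependent summand $S(e^{2\pi is} J\nu_y, e^{2\pi is} J\nu_y)$. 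The second comes from the rotation of the frame $(J\nu, K\nu)$ along the Reeb orbit, governed by $\nabla_R\nu$, which contributes the angle-independent summand $S(I\nu_y, I\nu_y)$. The factor $1/\langle Z,\nu\rangle$ emerges uniformly from the definition of $R$.

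The main obstacle will be careful bookkeeping of the Reeb-projection corrections: since $\pi^{-1}(v_s) \in \xi_y$ differs from $v_s \in Q_y$ by a Reeb-direction term whose coefficient involves $\lambda(v_s)$, and since $d\phi_t(\pi^{-1}(v_s))$ lies in $\xi_{\phi_t(y)}$ and must be re-projected to $Q_{\phi_t(y)}$ before its angle in the frame $(J\nu, K\nu)$ can be read off, many cross-terms appear in the Leibniz expansion that must be shown to cancel or recombine into the clean expression (\ref{eqn:curvature_rotation}). The symmetry of $S$ and the antisymmetry of $\omega$ should make these cancellations automatic once the expansion is organized along the orthonormal quaternion basis $(I\nu, J\nu, K\nu)$ of $T_yY$.
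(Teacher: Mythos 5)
The paper does not supply its own proof of this proposition: it is quoted from \cite[Prop.~4.7]{ch2020}, so there is no in-paper argument to compare against. Evaluating your proposal on its own terms: the outline is correct and is the natural direct computation. Your observation that $Q$ is the metric-orthogonal complement of $R$ in $TY$ and that $\pi:\xi\to Q$ is a symplectic isomorphism (because $\iota_R d\lambda = 0$) is right, and the two summands do arise exactly as you describe: the term $S(e^{2\pi is}J\nu, e^{2\pi is}J\nu)$ from the $Q$-component of $\nabla_{v_s}(I\nu)$, and the term $S(I\nu,I\nu)$ from the frame rotation rate $-\langle\nabla_R(J\nu),K\nu\rangle = S(R,I\nu)$, with the overall factor $1/\langle Z,\nu\rangle$ coming from $R = I\nu/\langle Z,\nu\rangle$.

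One caution on the way you phrase the intermediate step. If $\dot A(0)v_s$ denotes the ambient $\R^4$-derivative $\frac{d}{dt}\big|_{t=0}\pi(d\phi_t(\pi^{-1}(v_s)))$, then $\tfrac{1}{2\pi}\omega(v_s,\dot A(0)v_s)$ equals only the first summand and does \emph{not} by itself include the frame-rotation term; the identity $\varrho_\tau = \tfrac{1}{2\pi}\omega(v_s,\dot A(0)v_s)$ holds only when $A(t)$ is the trivialized matrix $\Phi_\tau(t,y)\in\on{Sp}(2)$ acting on $\R^2$, where differentiation is done in the moving frame. You in fact recover the missing term when you separately account for $\nabla_R\nu$, so the end result is fine, but as written the formula and the two-term Leibniz expansion of $\nabla_{\pi^{-1}(v_s)}R$ are not the same bookkeeping, and conflating them could lead to double-counting or dropping a term if carried out mechanically.

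On the cross terms you flag as the main obstacle: they do cancel. Writing $\pi^{-1}(v_s) = v_s - \lambda(v_s)R$, the contribution $-\lambda(v_s)P_Q\nabla_R R$ to $\nabla_{\pi^{-1}(v_s)}R$ produces a term proportional to $\lambda(v_s)\,S(I\nu,v_s)$, and the $t$-dependence of the projection $\pi_{\phi_t(y)}$ applied to $d\phi_t(\pi^{-1}(v_s))$ produces the exactly opposite term $+\lambda(v_s)\,S(I\nu,v_s)$ (via the $\tfrac{\lambda(v_s)}{\langle Z,\nu\rangle}I\nabla_R\nu$ piece of $\dot W(0)$). Once you write $\dot W(0) = P_Q\big[\nabla_{\pi^{-1}(v_s)}R + \tfrac{\lambda(v_s)}{\langle Z,\nu\rangle}I\nabla_R\nu\big]$ in the orthonormal basis $(I\nu,J\nu,K\nu)$, the cancellation is automatic, as you predicted, and the clean formula \eqref{eqn:curvature_rotation} follows. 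So your plan would succeed; the only thing to fix is the sloppy identification between the ambient-derivative version of $\dot A(0)v_s$ and the moving-frame derivative.
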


\noindent Note that this result holds for any star-shaped boundary, not only convex ones.

\vspace{3pt}

As an easy consequence of (\ref{eqn:curvature_rotation}), we have the following bound on the Ruelle invariant of $Y$.

\begin{lemma} \label{lem:Ru_curvature_lowerbound} The Ruelle invariant $\Ru(Y)$ is bounded by the following curvature integrals.
\begin{equation} \label{eqn:Ru_curvature_bound_main}
\frac{1}{2\pi} \cdot \int_Y S(I\nu,I\nu) \dvol_\sigma \le \Ru(Y) \le \frac{3}{2\pi} \cdot \int_Y H \dvol_\sigma
\end{equation}
\end{lemma}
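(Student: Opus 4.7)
The plan is to combine Lemma \ref{lem:Ru_alternate_formula}, which expresses $\Ru(Y)$ as a time-average of $\int_Y \bar{\Phi}_t^*\varrho_\tau(-,s)\,\lambda\wedge d\lambda$, with the pointwise curvature-rotation formula (\ref{eqn:curvature_rotation}) of Proposition \ref{prop:rotation_curvature_formula}. The central observation is that convexity of $X$ forces $s$-uniform pointwise bounds on the rotation density, so the complicated $s$-dependence introduced by the projectivized linearized flow $\bar{\Phi}_t$ disappears after a single change of variables along the Reeb flow.

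First I would extract two pointwise bounds on $\varrho_\tau(y,s)$ from convexity, namely positive semi-definiteness of the second fundamental form $S$ on $TY$. For the lower bound, $S(e^{2\pi i s}J\nu_y, e^{2\pi i s}J\nu_y)\ge 0$ gives
\[\varrho_\tau(y,s) \ge \frac{S(I\nu_y, I\nu_y)}{2\pi\langle Z_y,\nu_y\rangle}.\]
For the upper bound, I would use that $\{I\nu,J\nu,K\nu\}$ is an orthonormal frame for $TY$ and that for any PSD quadratic form $A$ on $\R^2$ and any unit $v\in\R^2$, $A(v,v)\le \operatorname{tr}(A)$, since the eigenvalues of $A$ are non-negative. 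Applied to $S$ restricted to $\operatorname{span}(J\nu, K\nu)$, this yields $S(e^{2\pi i s}J\nu, e^{2\pi i s}J\nu)\le S(J\nu,J\nu)+S(K\nu,K\nu)$, and therefore
\[S(I\nu_y,I\nu_y) + S(e^{2\pi i s}J\nu_y, e^{2\pi i s}J\nu_y) \le \operatorname{tr}(S|_{T_yY}) = 3H(y),\]
whence $\varrho_\tau(y,s)\le 3H(y)/(2\pi\langle Z_y,\nu_y\rangle)$ uniformly in $s$.

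Writing $\bar{\Phi}_t(y,s) = (\phi_t(y), \theta_t(y,s))$ where $\phi_t$ is the Reeb flow on $Y$, evaluating the pointwise bounds at $\bar{\Phi}_t(y,s)$ yields $s$-independent bounds on $\bar{\Phi}_t^*\varrho_\tau(y,s)$ involving only $\phi_t(y)$. Integrating against $\lambda\wedge d\lambda$ and substituting $y'=\phi_t(y)$, which preserves $\lambda\wedge d\lambda$ because $R$ is a Reeb vector field, and then applying the identity (\ref{eqn:volume_form_identity}) to cancel the weights $\langle Z,\nu\rangle$, one obtains
\[\frac{1}{2\pi}\int_Y S(I\nu,I\nu)\,\dvol_\sigma \;\le\; \int_Y \bar{\Phi}_t^*\varrho_\tau(-,s)\,\lambda\wedge d\lambda \;\le\; \frac{3}{2\pi}\int_Y H\,\dvol_\sigma\]
for every $t\ge 0$. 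Taking the time-average and passing to the limit $T\to\infty$ via Lemma \ref{lem:Ru_alternate_formula} then yields the claim. I do not anticipate any real obstacle here: the essential ingredients---the curvature-rotation formula, positive semi-definiteness of $S$, and invariance of $\lambda\wedge d\lambda$ under the Reeb flow---are all in hand, and the only point requiring a moment of care is producing the $s$-uniform bound so that the unknown $\theta_t(y,s)$-dependence never enters the estimate.
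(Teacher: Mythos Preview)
Your proposal is correct and follows essentially the same approach as the paper: apply the curvature--rotation formula inside the time-averaged integral from Lemma~\ref{lem:Ru_alternate_formula}, use positive semi-definiteness of $S$ to get $s$-independent pointwise bounds, and then pass through the Reeb flow using invariance of $\lambda\wedge d\lambda$ together with (\ref{eqn:volume_form_identity}). The only cosmetic difference is that for the upper bound the paper observes directly that $\{I\nu, e^{2\pi i s}J\nu, e^{2\pi i s}K\nu\}$ is itself an orthonormal frame for $TY$, so the three $S$-values sum to $3H$ and one drops the nonnegative $S(e^{2\pi i s}K\nu, e^{2\pi i s}K\nu)$ term; your trace inequality $A(v,v)\le\operatorname{tr}(A)$ on $\operatorname{span}(J\nu,K\nu)$ accomplishes the same thing.
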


\begin{proof} By Lemma \ref{lem:Ru_alternate_formula}, we have the following integral formula for the Ruelle invariant.
\begin{equation} \label{eqn:Ru_curvature_lowerbound_0}
\Ru(Y) = \lim_{T \to \infty} \frac{1}{T} \int_0^T \left(\int_Y [\bar{\Phi}_t^*\varrho_\tau](-,s) \cdot \lambda \wedge d\lambda \right) dt
\end{equation}
By the curvature-rotation formula in Proposition \ref{prop:rotation_curvature_formula}, we can write the integrand as 
\begin{equation} \label{eqn:Ru_curvature_lowerbound_a}
[\bar{\Phi}_t^*\varrho_\tau](-,s) = \bar{\Phi}_t^*\Big(\frac{1}{2\pi\cdot \langle Z,\nu\rangle}(S(I\nu,I\nu) + S(e^{2\pi i s} \cdot J\nu, e^{2\pi i s}\cdot J\nu))\Big)
\end{equation}

To bound the righthand side of (\ref{eqn:Ru_curvature_lowerbound_a}), note that $I\nu,e^{2\pi i s}\cdot J\nu$ and $e^{2\pi i s} \cdot K\nu$ form an orthonormal basis of $TY$ with respect to the restricted metric $g|_Y$, so that 
\[S(I\nu,I\nu) + S(e^{2\pi i s}\cdot J\nu,e^{2\pi i s}\cdot J\nu) + S(e^{2\pi i s}\cdot K\nu,e^{2\pi i s}\cdot K\nu) = \on{trace}(S) = 3H \]
Furthermore, since $Y$ is convex, the second fundamental form $S$ is positive semi-definite. Therefore by (\ref{eqn:Ru_curvature_lowerbound_a}), we have the following lower and upper bound.
\begin{equation} \label{eqn:Ru_curvature_lowerbound_b}
\bar{\Phi}_t^*\Big(\frac{S(I\nu,I\nu)}{\langle Z,\nu\rangle}\Big) \le [\bar{\Phi}_t^*\varrho_\tau](-,s) \le 3 \cdot \bar{\Phi}_t^*\Big(\frac{H}{\langle Z,\nu\rangle}\Big)
\end{equation} 
It is key here that the lower and upper bounds in (\ref{eqn:Ru_curvature_lowerbound_b}) are independent of $s$. To simplify the two sides of (\ref{eqn:Ru_curvature_lowerbound_b}), let $F:Y \times S^1 \to \R$ be any map pulled back from a map $F:Y \to \R$. Since the flow $\bar{\Phi}_t$ on $Y \times S^1$ lifts the Reeb flow $\phi_t$ on $Y$, and $\phi_t$ preserves $\lambda$, we have
\[\bar{\Phi}_t^*\Big(\frac{F}{\langle Z,\nu\rangle}\Big) \cdot \lambda \wedge d\lambda = \phi_t^*\Big(\frac{F}{\langle Z,\nu\rangle}\Big) \cdot \lambda \wedge d\lambda = \phi_t^*\Big(F \cdot \frac{\lambda \wedge d\lambda}{\langle Z,\nu\rangle}\Big) = \phi^*_t\Big(F \cdot \on{dvol}_\sigma\Big)
\]
Since the integral of $\phi_t^*(F \cdot \dvol_\sigma)$ over $Y$ is independent of $t$, we have
\begin{equation} \label{eqn:Ru_curvature_lowerbound_c}
\frac{1}{T} \int_0^T \left(\int_Y \bar{\Phi}_t^*\Big(\frac{F}{\langle Z,\nu\rangle}\Big) \cdot \lambda \wedge d\lambda \right) dt = \frac{1}{T} \int_0^T \left(\int_Y F \cdot \dvol_\sigma \right) dt = \int_Y F \cdot \dvol_\sigma
\end{equation}

By plugging in the estimate (\ref{eqn:Ru_curvature_lowerbound_b}) to the integral formula (\ref{eqn:Ru_curvature_lowerbound_0}) and applying (\ref{eqn:Ru_curvature_lowerbound_c}) to the functions $S(I\nu,I\nu)$ and $H$ on $Y$, we acquire the desired bound (\ref{eqn:Ru_curvature_bound_main}).
\end{proof}

\subsection{Bounding Curvature Integrals} \label{subsec:lower_bounding_curvature_integral} We now further simplify the lower bound of the Ruelle invariant in Lemma \ref{lem:Ru_curvature_lowerbound} by estimating (from below) the integral
\[\int_Y S(I\nu,I\nu) \cdot \dvol_\sigma\]
using the geometric quantities (e.g. area and diameter) appearing in \S \ref{subsec:standard_ellipsoids}. This will help us to leverage the sandwich estimates in Lemma \ref{lem:quantity_sandwich} in the proof of the Ruelle invariant bound in \S \ref{subsec:proof_of_main_bound}.

\vspace{3pt}

Recall that $X \subset \R^4$ denotes a convex domain with smooth boundary $Y$. Let $\psi:\R \times Y \to Y$ be the flow by $I\nu$. Let $S_T$ and $H_T$ denote the time-averaged versions of $S(I\nu,I\nu)$ and $H$, respectively.
\begin{equation}
S_T := \frac{1}{T} \int_0^T S(I \nu,I \nu) \circ \psi_t  dt \qquad H_T := \frac{1}{T} \int_0^T H \circ \psi_t dt\end{equation}
We will also need to consider a time-averaged acceleration function $A_T$ on $Y$. Namely, let $\gamma:\R \to Y$ be a trajectory of $I\nu$ with $\gamma(0) = x$. Then we define
\begin{equation}A_T := \frac{1}{T} \int_0^T |\nabla_{I\nu}I\nu| \circ \psi_t dt \qquad\text{or equivalently}\qquad A_T(x) = \frac{1}{T} \int_0^T |\ddot{\gamma}| dt\end{equation}

The first ingredient to the bounds in this section is the following estimate relating these three time-averaged functions.

\begin{lemma} \label{lem:acceleration_bound} For any $T > 0$, the functions $A_T,H_T$ and $S_T$ satisfy $A_T^2 \le 3 \cdot H_T \cdot S_T$ pointwise. 
\end{lemma}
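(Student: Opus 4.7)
The plan is to first derive the pointwise inequality
\[|\nabla_{I\nu}I\nu|^2 \le 3H \cdot S(I\nu,I\nu),\]
and then pass to time averages via the Cauchy--Schwarz inequality.

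For the pointwise inequality, the key observation is that the complex structure $I$ on $\R^4$ is parallel with respect to the ambient flat connection $\nabla$ and acts as an isometry. Therefore
\[\nabla_{I\nu}(I\nu) = I(\nabla_{I\nu}\nu), \qquad |\nabla_{I\nu}I\nu| = |\nabla_{I\nu}\nu|.\]
Now $\nabla_{I\nu}\nu$ is tangent to $Y$ (since $\langle\nabla_u\nu,\nu\rangle = \frac{1}{2}u|\nu|^2 = 0$), and by the computation in the introduction to \S\ref{subsec:curvature_rotation}, the triple $(I\nu, J\nu, K\nu)$ is a pointwise orthonormal frame of $TY$ (each of $Iv$, $Jv$, $Kv$ is orthogonal to $v$ in the quaternion inner product, with unit length when $v$ is unit). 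Expanding $\nabla_{I\nu}\nu$ in this frame and using $S(u,w)=\langle\nabla_u\nu,w\rangle$, we obtain
\[|\nabla_{I\nu}I\nu|^2 = |\nabla_{I\nu}\nu|^2 = S(I\nu,I\nu)^2 + S(I\nu,J\nu)^2 + S(I\nu,K\nu)^2.\]
Because $X$ is convex, $S$ is positive semidefinite, so the Cauchy--Schwarz inequality for semi-inner products gives
\[S(I\nu,J\nu)^2 \le S(I\nu,I\nu)\,S(J\nu,J\nu), \qquad S(I\nu,K\nu)^2 \le S(I\nu,I\nu)\,S(K\nu,K\nu).\]
Combining these with the identity $S(I\nu,I\nu)+S(J\nu,J\nu)+S(K\nu,K\nu) = \on{tr}(S) = 3H$ yields the claimed pointwise bound
\[|\nabla_{I\nu}I\nu|^2 \le S(I\nu,I\nu)\bigl(S(I\nu,I\nu)+S(J\nu,J\nu)+S(K\nu,K\nu)\bigr) = 3H\cdot S(I\nu,I\nu).\]

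To conclude, fix $x \in Y$ and let $\gamma$ be the trajectory of $I\nu$ with $\gamma(0)=x$. Taking square roots in the pointwise bound and integrating along $\gamma$, we have
\[T\cdot A_T(x) = \int_0^T |\nabla_{I\nu}I\nu|\circ\psi_t \, dt \le \sqrt{3}\int_0^T \sqrt{(H\circ\psi_t)\cdot (S(I\nu,I\nu)\circ\psi_t)}\, dt.\]
Applying the Cauchy--Schwarz inequality to the integrand on the right gives
\[T\cdot A_T(x) \le \sqrt{3}\left(\int_0^T H\circ\psi_t\, dt\right)^{1/2}\left(\int_0^T S(I\nu,I\nu)\circ\psi_t\, dt\right)^{1/2} = \sqrt{3}\cdot T\sqrt{H_T(x)\,S_T(x)},\]
which upon squaring yields $A_T^2 \le 3\cdot H_T\cdot S_T$ pointwise.

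There is no real obstacle here: the argument is a pointwise calculation followed by a one-line application of Cauchy--Schwarz to the time integral. The only subtlety to double-check is that $\nabla$ in the definition of $A_T$ is the ambient connection on $\R^4$ (so that $\nabla I = 0$ applies), but this is consistent with the conventions set out at the start of \S\ref{sec:main_estimate}.
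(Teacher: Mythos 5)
Your proof is correct and follows essentially the same approach as the paper: establish the pointwise inequality $|\nabla_{I\nu}I\nu|^2 \le 3H\cdot S(I\nu,I\nu)$ via the Cauchy--Schwarz inequality for the positive semidefinite form $S$, then pass to time averages with Cauchy--Schwarz on the time integral. The only difference is cosmetic --- you exploit $\nabla I = 0$ to reduce to $|\nabla_{I\nu}\nu|$, a tangent vector, and decompose it in the frame $(I\nu, J\nu, K\nu)$, whereas the paper decomposes $\nabla_{I\nu}I\nu$ directly in the frame $(\nu, I\nu, J\nu, K\nu)$ using the quaternion relations $I^2=-1$, $IJ=K$, $IK=-J$; both routes yield the same expression $S(I\nu,I\nu)^2 + S(I\nu,J\nu)^2 + S(I\nu,K\nu)^2$.
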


\begin{proof} In fact, the non-time-averaged version of this estimate holds. We will now show that
\begin{equation} \label{eqn:SIv_lower_bound_step_1}
|\nabla_{I\nu} I\nu|^2 \le 3H \cdot S(I\nu,I\nu)
\end{equation}
To start, we need a formula for $\nabla_{I\nu} I\nu$ in terms of the second fundamental form, as follows.
\[
\nabla_{I\nu}I\nu = \langle \nu,\nabla_{I\nu}I\nu\rangle \nu + \langle I\nu,\nabla_{I\nu}I\nu\rangle I\nu +  \langle J\nu,\nabla_{I\nu}I\nu\rangle J\nu  +  \langle K\nu,\nabla_{I\nu}I\nu\rangle K\nu 
\]
\[
= -\langle I\nu,\nabla_{I\nu}\nu\rangle \nu - \langle I^2\nu,\nabla_{I\nu}\nu\rangle I\nu - \langle IJ\nu ,\nabla_{I\nu}\nu\rangle J\nu - \langle IK\nu,\nabla_{I\nu} \nu\rangle K\nu
\]
Applying the quaternionic relations $I^2 = -1$, $IJ = K$ and $IK = -J$, we can rewrite this as
\[
-\langle I\nu,\nabla_{I\nu}\nu\rangle \nu + \langle\nu,\nabla_{I\nu}\nu\rangle I\nu - \langle K\nu ,\nabla_{I\nu}\nu\rangle J\nu + \langle J\nu,\nabla_{I\nu} \nu\rangle K\nu\]
Finally, applying the definition of the second fundamental form we find that
\[\nabla_{I\nu} I\nu = -S(I\nu,I\nu)\nu - S(I\nu,K\nu)J\nu + S(I\nu,J\nu)K\nu
\]
To estimate the righthand side, we note that $S(u,v)^2 \le S(u,u)S(v,v)$ for any vectorfields $u$ and $v$ by Cauchy-Schwarz, since $S$ is positive semi-definite. Thus we have
\[
|\nabla_{I\nu} I\nu|^2 \le S(I\nu,I\nu)^2 + S(I\nu,I\nu)S(J\nu,J\nu) + S(I\nu,I\nu)S(K\nu,K\nu) = 3H \cdot S(I\nu,I\nu)
\] 
This proves (\ref{eqn:SIv_lower_bound_step_1}) and the desired estimate follows immediately by Cauchy-Schwarz.
\begin{equation}\label{eqn:SIv_lower_bound_step_2a} A_T^2 = \big(\frac{1}{T}\int_0^T |\nabla_{I\nu} I\nu| \circ \psi_t dt \big)^2 \le 3 \cdot \frac{1}{T} \int_Y H \circ \psi_t dt \cdot \frac{1}{T} \int_Y S(I\nu,I\nu) \circ \psi_t dt = 3 H_T \cdot S_T\end{equation}
This concludes the proof of the lemma. \end{proof}

As a consequence, we get the following estimate for the curvature integral of interest in terms of area, total mean curvature and the time-averaged acceleration $A_T$.

\begin{lemma} \label{lem:acceleration_bound} Let $\Sigma \subset Y$ be an open subset of $Y$ and let $T > 0$. Then 
\begin{equation} \label{eqn:acceleration_bound} 
\int_Y S(I\nu,I\nu) \cdot \dvol_\sigma \ge \frac{\area(\Sigma)^2}{3 \cdot \int_Y H \dvol_\sigma} \cdot \on{min}_\Sigma(A_T)^2 \end{equation}
\end{lemma}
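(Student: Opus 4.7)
The plan is to derive the bound from the pointwise estimate $A_T^2 \le 3 H_T \cdot S_T$ established in the previous lemma, combined with an application of Cauchy-Schwarz and the key observation that the flow $\psi$ preserves the surface measure $\dvol_\sigma$.

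First I would verify the invariance $\psi_t^*\dvol_\sigma = \dvol_\sigma$. Writing $I\nu = \langle Z,\nu\rangle \cdot R$ and using the identity $\dvol_\sigma = \langle Z,\nu\rangle^{-1} \lambda \wedge d\lambda$ from (\ref{eqn:volume_form_identity}), one computes
\[
\iota_{I\nu}\dvol_\sigma = \iota_R(\lambda \wedge d\lambda) = d\lambda,
\]
so $\mathcal{L}_{I\nu}\dvol_\sigma = d\iota_{I\nu}\dvol_\sigma = d(d\lambda) = 0$. In particular, for any nonnegative function $F$ on $Y$, the time-averaged integral is preserved:
\[
\int_Y F_T \cdot \dvol_\sigma = \frac{1}{T}\int_0^T \int_Y F \circ \psi_t \cdot \dvol_\sigma \, dt = \int_Y F \cdot \dvol_\sigma.
\]
Applying this to the nonnegative functions $H$ and $S(I\nu,I\nu)$ (nonnegativity follows from convexity of $X$), and using $\Sigma \subset Y$, we obtain
\[
\int_\Sigma H_T \cdot \dvol_\sigma \le \int_Y H \cdot \dvol_\sigma \qquad\text{and}\qquad \int_\Sigma S_T \cdot \dvol_\sigma \le \int_Y S(I\nu,I\nu) \cdot \dvol_\sigma.
\]

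Next, starting from the pointwise inequality $A_T \le \sqrt{3 H_T \cdot S_T}$, I would integrate over $\Sigma$ and apply Cauchy-Schwarz to the product $\sqrt{H_T}\cdot \sqrt{S_T}$:
\[
\int_\Sigma A_T \cdot \dvol_\sigma \le \sqrt{3}\int_\Sigma \sqrt{H_T \cdot S_T} \cdot \dvol_\sigma \le \sqrt{3}\left(\int_\Sigma H_T \cdot \dvol_\sigma\right)^{1/2}\left(\int_\Sigma S_T \cdot \dvol_\sigma\right)^{1/2}.
\]
Squaring this estimate and inserting the invariance bounds from the previous step yields
\[
\left(\int_\Sigma A_T \cdot \dvol_\sigma\right)^{2} \le 3 \cdot \int_Y H \cdot \dvol_\sigma \cdot \int_Y S(I\nu,I\nu) \cdot \dvol_\sigma.
\]

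Finally, the trivial bound $\int_\Sigma A_T \cdot \dvol_\sigma \ge \on{min}_\Sigma(A_T) \cdot \area(\Sigma)$ gives
\[
\on{min}_\Sigma(A_T)^2 \cdot \area(\Sigma)^2 \le 3 \cdot \int_Y H \cdot \dvol_\sigma \cdot \int_Y S(I\nu,I\nu) \cdot \dvol_\sigma,
\]
which rearranges to (\ref{eqn:acceleration_bound}). The only nontrivial ingredient is the divergence-free property of $I\nu$ with respect to $\dvol_\sigma$; the remainder is an application of Cauchy-Schwarz, so I do not expect any serious obstacle.
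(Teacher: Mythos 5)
Your proof is correct and follows essentially the same route as the paper: verify that $I\nu$ preserves $\dvol_\sigma$ via the Cartan/Lie-derivative computation, use the pointwise bound $A_T^2 \le 3 H_T S_T$ from the preceding lemma, integrate over $\Sigma$ and apply Cauchy--Schwarz, then pass from $\Sigma$ to $Y$ using nonnegativity (convexity) and the time-averaging identity. The only cosmetic difference is that you spell out the intermediate identity $\iota_{I\nu}\dvol_\sigma = d\lambda$ and explicitly invoke nonnegativity to justify $\int_\Sigma \le \int_Y$, both of which the paper leaves implicit.
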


\begin{proof} We first note that $I\nu$ preserves the volume form $\dvol_\sigma$, since
\[\mathcal{L}_{I\nu}(\dvol_\sigma) = d\iota_{I\nu} \dvol_\sigma = d\iota_R(\lambda \wedge d\lambda) = d^2\lambda = 0\]
Here $R$ is the Reeb vector-field on $Y$, and the above equalities follow from (\ref{eqn:Reeb_vector_vs_Inu}) and (\ref{eqn:volume_form_identity}). Thus, time-averaging leaves the integral over $Y$ unchanged.
\[
\int_Y H_T \dvol_\sigma = \int_Y H \dvol_\sigma \qquad\text{and}\qquad \int_Y S_T \dvol_\sigma = \int_Y S(I\nu,I\nu) \dvol_\sigma
\]
We can thus integrate the estimate $A_T^2 \le 3 H_T \cdot S_T$ to see that
\[
\on{min}(A_T)^2 \cdot \area(\Sigma)^2 \le \Big(\int_\Sigma A_T \cdot \dvol_\sigma\Big)^2 \le \Big(\sqrt{3} \cdot \int_\Sigma H_T^{1/2} \cdot S_T^{1/2} \cdot \dvol_\sigma\Big)^2\]
\[\le 3 \cdot \int_\Sigma H_T \cdot \dvol_\sigma \cdot \int_\Sigma S_T \cdot \dvol_\sigma \le 3 \cdot \int_Y H \cdot \dvol_\sigma \cdot \int_Y S(I\nu,I\nu) \cdot \dvol_\sigma
\]
After some rearrangement, this is the desired estimate. \end{proof}

Every quantity on the righthand side of (\ref{eqn:acceleration_bound}) can be controlled using the estimates in Lemma \ref{lem:quantity_sandwich}, with the exception of the term involving the time-averaged acceleration $A_T$. However, we can bound $A_T$ in terms of $\diam(X)^{-1}$, using the following general fact about curves of unit speed.

\begin{lemma} \label{lem:AT1} Let $\gamma:[0,\infty) \to Y$ be a curve with $|\dot{\gamma}| = 1$ and let $C$ satisfy $0 < C < 1$. Then 
\[\frac{1}{T}\int_0^T |\ddot{\gamma}| dt \ge \frac{C}{\on{diam}(X)} \qquad\text{ for all }T \gg 0\]
\end{lemma}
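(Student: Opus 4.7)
The plan is to exploit the fact that $\gamma$ is confined to the bounded region $X$ while moving at unit speed: if the acceleration were uniformly small, the curve would be nearly a straight line and would quickly exit any set of finite diameter, so a minimum average acceleration of order $1/\diam(X)$ is forced.

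The natural test function is $f(t) := |\gamma(t) - \gamma(0)|^2$. Since $\gamma(t) \in Y \subset X$ for all $t$, we have $f(t) \le \diam(X)^2$. Differentiating twice and using $|\dot\gamma| = 1$, one gets
\[
\dot f(t) = 2\langle \gamma(t)-\gamma(0),\dot\gamma(t)\rangle,\qquad \ddot f(t) = 2 + 2\langle \gamma(t)-\gamma(0),\ddot\gamma(t)\rangle.
\]
Cauchy--Schwarz bounds $|\dot f(T)| \le 2\diam(X)$, while $\dot f(0) = 0$. Integrating $\ddot f$ from $0$ to $T$ therefore yields
\[
2T + 2\int_0^T \langle \gamma(t)-\gamma(0),\ddot\gamma(t)\rangle\, dt \;=\; \dot f(T) - \dot f(0) \;\le\; 2\diam(X),
\]
so $\int_0^T \langle \gamma(t)-\gamma(0),\ddot\gamma(t)\rangle\, dt \le \diam(X) - T$.

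Once $T > \diam(X)$, the right-hand side is negative, forcing $\int_0^T |\langle \gamma(t)-\gamma(0),\ddot\gamma(t)\rangle|\, dt \ge T - \diam(X)$. A second application of Cauchy--Schwarz, bounding the integrand by $\diam(X)\cdot|\ddot\gamma(t)|$, gives
\[
\frac{1}{T}\int_0^T |\ddot\gamma(t)|\, dt \;\ge\; \frac{1}{\diam(X)} - \frac{1}{T},
\]
which exceeds $C/\diam(X)$ as soon as $T \ge \diam(X)/(1-C)$, proving the claim. The only point requiring minor care is that $\ddot\gamma$ refers to the second derivative of $\gamma$ in the ambient $\R^4$; this is consistent with the definition of $A_T$ via the Euclidean connection $\nabla_{I\nu}I\nu$. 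No significant obstacle arises; the argument is a clean one-variable computation built on the confinement of $\gamma$ to $X$.
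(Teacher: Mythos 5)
Your proof is correct and follows essentially the same strategy as the paper's: two applications of Cauchy--Schwarz combined with integration by parts to show that a unit-speed curve confined to a set of bounded diameter must have non-vanishing average acceleration. The only difference is cosmetic — you work with $|\gamma(t)-\gamma(0)|^2$ rather than $\langle\gamma,\ddot\gamma\rangle$ directly, which kills the boundary term at $t=0$ and yields the marginally sharper threshold $T\ge\diam(X)/(1-C)$ instead of the paper's $T\ge 2\diam(X)/(1-C)$, and which avoids the (harmless in context) reliance on $0\in X$ that the paper's bound $|\gamma|\le\diam(X)$ implicitly uses.
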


\begin{proof} Let $T$ satisfy $T > CT + 2 \cdot \diam(Y)$. Then by Cauchy-Schwarz, we have 
\begin{equation}
\label{eqn:SIv_lower_bound_step_2c}
\diam(X)\int_0^T |\ddot{\gamma}| dt \ge \int_0^T |\gamma| \cdot |\ddot{\gamma}| dt \ge \int_0^T |\langle \ddot{\gamma},\gamma\rangle| dt \ge \Big|\int_0^T \langle \ddot{\gamma},\gamma\rangle dt \Big|
\end{equation}
On the other hand, by integration by parts we acquire
\begin{equation}
\label{eqn:SIv_lower_bound_step_2d}
\Big|\int_0^T \langle \ddot{\gamma},\gamma\rangle dt \Big| \ge \Big|\int_0^T |\dot{\gamma}|^2 dt - \langle \gamma,\dot{\gamma}\rangle|_0^T \Big| \ge T - 2\diam(X) \ge CT
\end{equation}
Combining the estimates (\ref{eqn:SIv_lower_bound_step_2c}) and (\ref{eqn:SIv_lower_bound_step_2d}) yields the claimed bound. \end{proof}

In particular, Lemma \ref{lem:AT1} implies that $A_T \ge C \cdot \diam(X)^{-1}$ for all $C < 1$ and sufficiently large $T$. Combining this with Lemma \ref{lem:acceleration_bound} and taking $C \to 1$, we acquire the following corollary.

\begin{cor} \label{cor:SIv_lower_bound} Let $X \subset \R^4$ be a convex star-shaped domain with boundary $Y$. Then
\begin{equation}\label{eqn:SIv_lower_bound}
\int_Y S(I\nu,I\nu) \dvol_\sigma \ge \frac{\area(Y)^2}{3 \cdot\diam(X)^2 \cdot \int_Y H \dvol_\sigma}
\end{equation}
\end{cor}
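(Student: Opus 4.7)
The plan is to combine the three preparatory lemmas in sequence, with Lemma \ref{lem:AT1} serving as the bridge that converts the pointwise acceleration bound into a geometric bound involving $\diam(X)$.

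First I would observe that the vector field $I\nu$ on $Y$ has unit length, since $I$ is an isometry of $\R^4$ (it acts by left multiplication by a unit quaternion) and $\nu$ is the outward unit normal. Consequently, for every $x \in Y$ the integral curve $\gamma_x(t) = \psi_t(x)$ of $I\nu$ through $x$ satisfies $|\dot{\gamma}_x| = 1$, so Lemma \ref{lem:AT1} applies. Fix any constant $0 < C < 1$. The proof of Lemma \ref{lem:AT1} shows that the required time $T$ satisfies $T > 2\diam(Y)/(1-C)$, which depends only on $C$ and on $Y$, not on the specific curve $\gamma_x$. Hence for all $T$ sufficiently large (depending only on $C$),
\[
A_T(x) = \frac{1}{T}\int_0^T |\ddot{\gamma}_x|\, dt \ge \frac{C}{\diam(X)} \qquad \text{uniformly in } x \in Y,
\]
so in particular $\min_Y(A_T) \ge C/\diam(X)$.

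Next I would apply Lemma \ref{lem:acceleration_bound} with $\Sigma = Y$ to obtain
\[
\int_Y S(I\nu,I\nu)\,\dvol_\sigma \;\ge\; \frac{\area(Y)^2}{3 \cdot \int_Y H\,\dvol_\sigma}\cdot \min_Y(A_T)^2 \;\ge\; \frac{C^2\cdot\area(Y)^2}{3\cdot\diam(X)^2\cdot \int_Y H\,\dvol_\sigma}.
\]
The left-hand side is independent of $T$ and $C$, so after taking $T$ large enough that the bound above holds and then letting $C \nearrow 1$, the inequality (\ref{eqn:SIv_lower_bound}) follows.

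There is no real obstacle here: the only subtlety is the uniformity in $x$ when applying Lemma \ref{lem:AT1}, which I would verify by inspecting its proof (the threshold $T > 2\diam(Y)/(1-C)$ involves no quantity that depends on $x$). Once that uniformity is noted, the corollary is a direct concatenation of Lemmas \ref{lem:acceleration_bound} and \ref{lem:AT1}.
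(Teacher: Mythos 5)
Your proof is correct and follows essentially the same route the paper takes: note $|I\nu|=1$ so Lemma \ref{lem:AT1} applies to each $I\nu$-trajectory, observe that the threshold time in Lemma \ref{lem:AT1} depends only on $C$ and $\diam(X)$ (hence is uniform in $x$), plug $\min_Y A_T \ge C/\diam(X)$ into Lemma \ref{lem:acceleration_bound} with $\Sigma = Y$, and let $C \nearrow 1$. Your explicit verification of the uniformity in $x$ is a welcome detail that the paper passes over silently.
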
 
\noindent At this point, we can already apply Lemma \ref{lem:quantity_sandwich} to derive a uniform lower bound for $\ru(Y) \cdot \sys(Y)^{-1/2}$. However, this inequality does not have the desired exponent for $\on{sys}$. In order to fix this, we must derive a different estimate similar to Corollary \ref{cor:SIv_lower_bound} when $\on{sys}(Y)$ is near $0$. This is the objective of the rest of this part.

\vspace{3pt}

We will also need a less crude estimate on the time-averaged acceleration that uses the geometry of vector-field $I\nu$, but requires the hypothesis that $X$ has small systolic ratio.

\begin{lemma} \label{lem:AT_bound_for_small_sys} Suppose that $X$ satisfies $E(a,b) \subset X \subset 4 \cdot E(a,b)$ and let $\Sigma \subset Y$ be the open subset
\[\Sigma = Y \cap \C \times \on{int}(E(b/2))\]
Then there is an $\epsilon > 0$ and a $C > 0$ independent of $a,b$ and $X$ such that, if $a/b < \epsilon$ and $T = b^{1/2}$, then
\[A_T \ge C \cdot a^{-1/2} \quad \text{on}\quad \Sigma \qquad\text{and}\qquad \area(\Sigma) \ge C \cdot \area(Y)\]
\end{lemma}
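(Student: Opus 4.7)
The heuristic is that for $a/b$ small, $X$ is a long, thin tube: its $z_1$-cross-section has radius of order $a^{1/2}$ while it extends to distance of order $b^{1/2}$ in the $z_2$-direction. On the equator $\Sigma$ the normal $\nu$ to $Y$ is therefore nearly confined to the $z_1$-plane, so the vector field $I\nu$, which preserves the splitting $\R^4 = \C \times \C$, approximately rotates in small $z_1$-circles of radius $\sim a^{1/2}$. This rotation forces curvature of order $a^{-1/2}$. I would organize the proof into three steps.

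\textbf{Step 1 (normal control).} Decompose $\nu = \nu^{(1)} + \nu^{(2)}$ along $\C \times \C$. Using the supporting hyperplane inequality $\langle q - p, \nu\rangle \le 0$ at $p = (z_1, z_2) \in \Sigma$ against test points $q = (0, z_2'')$ with $|z_2''| \le (b/\pi)^{1/2}$ (which all lie in $E(a,b) \subset X$) and maximizing over $z_2''$, combined with the bounds $|z_1| \le 4(a/\pi)^{1/2}$ and $|z_2| \le (3b/(4\pi))^{1/2}$ from $X \subset 4E(a,b)$, I would show that $|\nu^{(2)}| \le C_1(a/b)^{1/2}$ on the slightly enlarged region $\Sigma' := Y \cap (\C \times \on{int}(E(3b/4)))$ containing $\Sigma$. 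In particular $|\nu^{(1)}| \ge 1/2$ on $\Sigma'$ when $\epsilon$ is small.

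\textbf{Step 2 (confinement and $A_T$ bound).} Since $I$ preserves $\C \oplus \C$, $|\dot z_2| = |\nu^{(2)}|$, so as long as the trajectory $\gamma$ remains in $\Sigma'$ the total drift in $|z_2|$ over time $T = b^{1/2}$ is $O(a^{1/2})$, strictly smaller than the gap $(3b/(4\pi))^{1/2} - (b/(2\pi))^{1/2}$ from $\partial \Sigma$ to $\partial \Sigma'$. A continuity bootstrap on the maximal confinement time then forces $\gamma([0,T]) \subset \Sigma'$. Throughout this interval $|\dot \gamma_1| = |\nu^{(1)}| \ge 1/2$ and $|\gamma_1| \le 4(a/\pi)^{1/2}$, where $\gamma_1 := \pi_1 \gamma$ denotes the projection to $\C_{z_1}$. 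Repeating the integration-by-parts argument of Lemma~\ref{lem:AT1} with $\gamma_1$ in place of $\gamma$, and using $|\ddot\gamma_1| \le |\ddot\gamma|$, then yields $A_T \ge c \cdot a^{-1/2}$, once $b/a$ is large enough that the endpoint contribution $O(a^{1/2})$ is dominated by the main $T/4$ term.

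\textbf{Step 3 (area comparison).} Set $S := \C \times \on{int}(E(b/2))$. The chain $E(a,b) \cap S \subset X \cap S \subset 4E(a,b) \cap S$ of convex bodies, together with monotonicity of boundary area under convex inclusion, yields
\[
\area(\Sigma) + \area(X \cap \partial S) \ge \area(\Sigma_E) + \area(E(a,b) \cap \partial S),
\]
where $\Sigma_E := \partial E(a,b) \cap S$. Direct slice integrals on the three-dimensional cylinder $\partial S = \C \times \partial E(b/2)$ bound the difference $\area(X \cap \partial S) - \area(E(a,b) \cap \partial S) = O(a b^{1/2})$, while an explicit computation analogous to Lemma~\ref{lem:ellipsoid_quantities} gives $\area(\Sigma_E) \ge c_0 \cdot a^{1/2} b$ for $a/b$ small. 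Combined with $\area(Y) \le C \cdot a^{1/2} b$ from Lemma~\ref{lem:quantity_sandwich}, the correction is $O(\epsilon^{1/2})$ times the main term, so $\area(\Sigma) \ge C' \cdot \area(Y)$ follows. The principal obstacle is bookkeeping to ensure uniformity of all constants in $a, b, X$: by scale-invariance of both $\area(\Sigma)/\area(Y)$ and $A_T \cdot a^{1/2}$, one may assume $a = 1$ and $b \ge 1/\epsilon$, and the three steps then close simultaneously for a single $\epsilon$ small enough.
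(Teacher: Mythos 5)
Your proposal is correct and follows essentially the same strategy as the paper's proof: a supporting-hyperplane estimate giving $|\pi_2\nu| = O((a/b)^{1/2})$ on $\C\times E(3b/4)$, a confinement argument over time $T=b^{1/2}$ feeding into the integration-by-parts mechanism of Lemma~\ref{lem:AT1}, and monotonicity of surface area of convex bodies for the $\area(\Sigma)$ bound. The only differences are cosmetic execution choices — you run the acceleration estimate on the projection $\gamma_1$ with $|\dot\gamma_1|\ge 1/2$, whereas the paper keeps unit speed and confines the full $\gamma$ to a translate of $B\cdot E(a,a)$; and for the area you compare with $\Sigma_E=\partial E(a,b)\cap S$ directly, whereas the paper inserts the ellipsoid $E(a/2,b/2)\subset X\cap S$ so that Lemma~\ref{lem:ellipsoid_quantities} applies verbatim.
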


\begin{proof} To bound $A_T$, the strategy is to show that the projection of $I\nu$ to the 2nd $\C$-factor is bounded along $\Sigma$ by $(a/b)^{1/2}$. Thus, a length $T = b^{1/2}$ trajectory $\gamma$ of $I\nu$ stays within a ball of diameter roughly $a^{1/2}$, and a variation of Lemma \ref{lem:AT1} implies the desired bound. 

\vspace{3pt}

To bound $\area(\Sigma)$, the strategy is (essentially) to use the monotonicity of area under the inclusion $E(a,b) \subset X$ to reduce to the case of an ellipsoid. We can then use the estimates in Lemmas \ref{lem:ellipsoid_quantities} and \ref{lem:quantity_sandwich} to deduce the result.   

\vspace{3pt}

{\bf Projection Bound.} Let $\pi_j:\R^4 \simeq \C^2 \to \C$ denote the projections to each $\C$-factor for $j = 1,2$. We begin by noting that there is an $A > 0$ independent of $X,a$ and $b$ such that
\begin{equation} \label{eqn:SIv_bound_for_small_sys_a}
|\pi_2 \circ I\nu(x)| = |\pi_2 \circ \nu(x)| < A \cdot (a/b)^{1/2} \qquad\text{if}\qquad x \in Y \quad \text{and} \quad \pi_2(x) \in E(3b/4)\end{equation}
To deduce (\ref{eqn:SIv_bound_for_small_sys_a}), assume that $x \in Y$ satisfies $\pi_2(x) \in E(3b/4)$ and that $\pi_2 \circ \nu(x) \neq 0$. Let $z \in 0 \times \partial E(b)$ be the unique vector such that $\pi_2(z-x)$ is a positive scaling of $\pi_2(\nu(x))$. Note that $z \in X$ since
\[0 \times E(b) \subset E(a,b) \subset X\]
Furthermore, since $X$ is convex, we know that $\langle \nu(x),w - x\rangle \le 0$ for any $w \in X$. Therefore 
\begin{equation}
0 \ge \langle \nu(x),z - x\rangle = |\pi_2 \circ \nu(x))| \cdot |\pi_2(z - x)|  + \langle \pi_1 \circ \nu(x), \pi_1(z - x)\rangle\end{equation}
Now note that since $\pi_2(x) \in E(3b/4)$ and $\pi_2(z) \in \partial E(b)$, we know that
\[|\pi_2(z - x)| \ge \frac{1 - (3/4)^{1/2}}{\pi^{1/2}} \cdot b^{1/2}\]
Likewise, $\pi_1(X) \subset 4 \cdot E(a)$ so that $|\pi_1(z - x)| \le 4a^{1/2}/\pi^{1/2}$. Finally, $|\pi_1 \circ \nu(x)| \le |\nu(x)| = 1$.  Thus, we can conclude that
\[|\pi_2 \circ \nu(x)| \le \frac{|\pi_1 \circ \nu(x)| \cdot |\pi_1(z - x)|}{|\pi_2(z - x)|} \le \frac{4}{1 - (3/4)^{1/2}} \cdot (a/b)^{1/2}\]

{\bf Acceleration Bound.} Now let $T = b^{1/2}$ and let $\gamma:[0,T] \to Y$ be a trajectory of $I\nu$ with $\gamma(0) \in \Sigma$. Since $\pi_2(\gamma(0)) \in E(b/2)$, we know that there is an interval $[0,S] \subset [0,T]$ where $\pi_2 \circ \gamma([0,S]) \subset E(3b/4)$. Thus, by (\ref{eqn:SIv_bound_for_small_sys_a}), we know that for $t \in [0,S]$ we have
\begin{equation} \label{eqn:SIv_bound_for_small_sys_b}
|\pi_2(\gamma(t) - \gamma(0))| \le \int_0^t |\pi_2 \circ I\nu \circ \gamma| dt \le A \cdot (a/b)^{1/2} \cdot t \le A \cdot a^{1/2}\end{equation}
By picking $\epsilon > 0$ small enough so that $a/b$ is small, we can ensure the following inequality.
\begin{equation} \label{eqn:SIv_bound_for_small_sys_d} A a^{1/2} \le (\frac{3b}{4\pi})^{1/2} - (\frac{b}{2\pi})^{1/2}\end{equation}
With this choice of $\epsilon$, (\ref{eqn:SIv_bound_for_small_sys_b}) and (\ref{eqn:SIv_bound_for_small_sys_d}) imply that $\pi_2(\gamma(t) - \gamma(0)) \in E(3b/4)$ if $0 \le t \le T$. In fact, (\ref{eqn:SIv_bound_for_small_sys_b}) implies that $\gamma$ is inside of a ball, i.e.
\[
\gamma(t) \in E(16a) \times E(\pi A^2 \cdot a) + p \subset B \cdot E(a,a) + p \quad\text{where}\quad p := 0 \times \pi_2(\gamma(0))
\]
Here $B := (16+\pi A^2)^{1/2}$. The diameter of the ball $B \cdot E(a,a)$ is $2B \cdot (a/\pi)^{1/2}$. Therefore, by applying (\ref{eqn:SIv_lower_bound_step_2c}) and (\ref{eqn:SIv_lower_bound_step_2d}) we see that
\[
\frac{2Ba^{1/2}}{\pi^{1/2}} \cdot A_T(x) = \frac{\diam(B \cdot E(a,a))}{T}\cdot \int_0^T |\ddot{\gamma}| dt \ge 1 - \frac{2 \diam(B \cdot E(a,a))}{T} = 1 - \frac{4B}{\pi^{1/2}} \cdot (a/b)^{1/2}
\]
We now choose $C > 0$ and $\epsilon > 0$ independent of $a,b$ and $X$, such that
\[A_T(x) \ge (\frac{\pi^{1/2}}{2B} - 2 \cdot (a/b)^{1/2}) \cdot a^{-1/2} \ge C a^{-1/2} \quad\text{if}\quad a/b \le \epsilon \]
This proves the desired bound on time-averaged acceleration.

\vspace{3pt}

{\bf Area Bound.} Let $U$ denote the convex domain given by the intersection $X \cap (\C \times E(b/2))$. Note that we have the following inclusion.
\[
E(a/2,b/2) \subset E(a,b) \cap (\C \times E(b/2)) \subset U
\]
Furthermore, the boundary of $U$ decomposes as follows.
\[
\partial U = \Sigma \cup \Sigma' \qquad\text{where}\qquad \Sigma' := X \cap (\C \times \partial E(b/2))
\]
Since $X \subset 4 \cdot E(a,b)$, we have $\Sigma' \subset R$ where $R$ is the hypersurface
\[
R := 4 \cdot E(a,b) \cap (\C \times \partial E(b/2)) = E(31a/2) \times \partial E(b/2)
\]
Combining the above facts and applying the monotonicity of surface area under inclusion of convex domains, we find that
\[
\area(\Sigma) = \area(\partial U) - \area(\Sigma') \ge \area(\partial E(a/2,b/2)) - \area(R)
\]
By Lemma \ref{lem:quantity_sandwich} and direct calculation, we compute the areas of $\partial E(a/2,b/2)$ and $R$ to be
\[
\area(\partial E(a/2,b/2)) \ge 2^{-3/2} \cdot ba^{1/2} \qquad \area(R) = \frac{31a}{2} \cdot (2\pi b)^{1/2} = 31 \cdot (\pi/2)^{1/2} \cdot (a/b)^{1/2} \cdot ba^{1/2}
\]
Now let $B < 2^{-5/2}$ and choose $\epsilon > 0$ small enough to that if $a/b < \epsilon$ then
\[2^{-3/2} - 31 \cdot (\pi/2)^{1/2} \cdot (a/b)^{1/2} > B\]
By applying this inequality and the upper bound for area in Lemma \ref{lem:quantity_sandwich}, we find that for some $C > 0$ independent of $X,a$ and $b$ and an $\epsilon > 0$ as above, we have
\[\area(\Sigma) \ge (2^{-3/2} - 31 \cdot (\pi/2)^{1/2} \cdot (a/b)^{1/2}) \cdot ba^{1/2} \ge C \cdot ba^{1/2} \ge \area(Y)\]
This yields the desired area bound and concludes the proof of the lemma. \end{proof}

By plugging the bounds for $A_T$ and $\area(\Sigma)$ from Lemma \ref{lem:AT_bound_for_small_sys} into Lemma \ref{lem:acceleration_bound}, we acquire the following variation of Corollary \ref{cor:SIv_lower_bound}.

\begin{cor} \label{cor:SIv_bound_for_small_sys} Let $X$ be a convex domain with smooth boundary $Y$, such that $E(a,b) \subset X \subset 4 \cdot E(a,b)$. Then there exists a $C > 0$ and $\epsilon > 0$ independent of $X,a$ and $b$ such that
\[\int_Y S(I\nu,I\nu) \cdot \dvol_\sigma \ge C \cdot \frac{\area(Y)^2}{a \cdot \int_Y H \dvol_\sigma}  \quad \text{if} \quad a/b < \epsilon\]
\end{cor}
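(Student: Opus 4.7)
The plan is to combine Lemma \ref{lem:acceleration_bound} with the two conclusions of Lemma \ref{lem:AT_bound_for_small_sys} by direct substitution. Compared with Corollary \ref{cor:SIv_lower_bound}, whose proof used only the crude acceleration bound $A_T \ge c \cdot \diam(X)^{-1}$ of order $b^{-1/2}$ coming from Lemma \ref{lem:AT1}, the new inequality must carry a factor $a^{-1}$ rather than $b^{-1}$, which is much larger when $a/b \to 0$. The conceptual source of this improvement is already contained in Lemma \ref{lem:AT_bound_for_small_sys}: trajectories of $I\nu$ starting in $\Sigma := Y \cap (\C \times \on{int}(E(b/2)))$ are confined to a thin tube of width of order $a^{1/2}$ for time $T = b^{1/2}$, and this upgrades the diameter term by a factor of $(b/a)^{1/2}$.

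Granting that preparation, the argument is short. Fix $\epsilon > 0$ small enough that Lemma \ref{lem:AT_bound_for_small_sys} applies, and set $T = b^{1/2}$ together with $\Sigma$ as above. That lemma produces constants $c_1, c_2 > 0$ independent of $X, a, b$ with $\on{min}_\Sigma A_T \ge c_1 \cdot a^{-1/2}$ and $\area(\Sigma) \ge c_2 \cdot \area(Y)$. Squaring these and inserting them into the conclusion
\[
\int_Y S(I\nu,I\nu) \cdot \dvol_\sigma \ge \frac{\area(\Sigma)^2}{3 \cdot \int_Y H \dvol_\sigma} \cdot \on{min}_\Sigma(A_T)^2
\]
of Lemma \ref{lem:acceleration_bound} immediately gives
\[
\int_Y S(I\nu,I\nu) \cdot \dvol_\sigma \ge \frac{c_1^2 c_2^2}{3} \cdot \frac{\area(Y)^2}{a \cdot \int_Y H \dvol_\sigma},
\]
so one may take $C = c_1^2 c_2^2 / 3$.

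Since Lemma \ref{lem:acceleration_bound} accepts any open $\Sigma \subset Y$ and any $T > 0$, the specialization demanded by Lemma \ref{lem:AT_bound_for_small_sys} raises no compatibility issue. I do not foresee any genuine obstacle at this step; essentially all of the analytic content has been extracted in the preceding lemma, and the role of this corollary is to package those two bounds into the single form that will be combined, together with Lemma \ref{lem:quantity_sandwich}, into the global Ruelle estimate of \S\ref{subsec:proof_of_main_bound}.
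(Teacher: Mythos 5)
Your argument is correct and is exactly the paper's proof: the paper gives this corollary in a single sentence by substituting the two conclusions of Lemma \ref{lem:AT_bound_for_small_sys} (the pointwise bound $A_T \ge c_1 a^{-1/2}$ on $\Sigma$ and the area bound $\area(\Sigma) \ge c_2 \area(Y)$, both valid for $a/b < \epsilon$ with $T = b^{1/2}$) into the estimate of Lemma \ref{lem:acceleration_bound}. Nothing is missing.
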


\subsection{Proof Of Main Bound} \label{subsec:proof_of_main_bound} We now combine the results of \S \ref{subsec:standard_ellipsoids}-\ref{subsec:lower_bounding_curvature_integral} to prove Proposition \ref{prop:Ru_bound}.

\begin{proof} (Proposition \ref{prop:Ru_bound}) By Lemma \ref{cor:johns_ellipsoid}, we may assume that $X$ is sandwiched between standard ellipsoid $E(a,b)$ with $0 < a \le b$ and a scaling.
\[E(a,b) \subset X \subset 4 \cdot E(a,b)\]
We begin by proving the lower bound, under this assumption. By Lemma \ref{lem:Ru_curvature_lowerbound}, we have 
\begin{equation}
\label{eqn:SIv_bound_large_sys}
\Ru(Y) \ge \frac{1}{2\pi} \cdot \int\limits_{Y} S(I\nu,I\nu) \dvol_\sigma 
\end{equation}
By applying the lower bound in Corollary \ref{cor:SIv_lower_bound} and using the estimates for diameter, area, total curvature, volume and systolic ratio in Lemma \ref{lem:quantity_sandwich}, we see that for constants $B,C > 0$, we have
\begin{equation} \label{eqn:SIv_bound_big_sys}
\int\limits_{Y} S(I\nu,I\nu) \cdot \dvol_\sigma \ge \frac{\area(Y)^2}{3 \cdot\diam(Y)^2 \cdot \int_Y H \dvol_\sigma} \ge B \cdot a \ge C \cdot \vol{X}^{1/2} \cdot \sys(Y)^{1/2}
\end{equation}
On the other hand, suppose that $\frac{a}{b}\ll 1$. Due to Lemma \ref{lem:quantity_sandwich}, this is equivalent to $\sys(Y)\ll 1$. By Corollary \ref{cor:SIv_bound_for_small_sys} and the estimates in Lemma \ref{lem:quantity_sandwich}, there are constants $A,B,C > 0$ with
\begin{equation}
\label{eqn:SIv_bound_small_sys}
\int\limits_{Y} S(I\nu,I\nu) \dvol_\sigma
\ge A\cdot\frac{\area(Y)^2}{a \cdot \int_Y H \dvol_\sigma}
\ge B \cdot b
\ge C\cdot \vol{X}^{1/2}\cdot \sys(Y)^{-1/2}
\end{equation}
By assembling the estimate (\ref{eqn:SIv_bound_large_sys}) with the two estimates \eqref{eqn:SIv_bound_big_sys} and \eqref{eqn:SIv_bound_small_sys}, we deduce the following lower bound for some $C > 0$.
\begin{equation}
\Ru(Y)\ge C\cdot \vol{X}^{1/2}\cdot \sys(Y)^{-1/2}
\end{equation}
After some rearrangement, this is the desired lower bound. 

\vspace{3pt}

The second inequality is easier to show. By using the upper bound in Lemma \ref{lem:Ru_curvature_lowerbound} and the estimate for the mean curvature in Lemma \ref{lem:quantity_sandwich}, we see that for some $A,C > 0$ we have
\begin{equation}
\Ru(Y) \le \int\limits_{Y} H\dvol_\sigma \le A\cdot b\le C\cdot\vol{X}^{1/2}\cdot \sys(Y)^{-1/2}
\end{equation}
This implies the desired upper bound, and concludes the proof. \end{proof}

%\begin{lemma} Let $\gamma:\R \to B^n(R)$ be a curve in the $n$-ball $B^n(R) \subset \R^n$ with $|\dot{\gamma}| = 1$. Then
%\begin{equation}
%\frac{1}{T}\int_0^T|\ddot{\gamma}| dt \ge \frac{C}{R} \qquad\text{for any}\quad C < 1 \text{ and }T \gg 0
%\end{equation}
%\end{lemma}

%\begin{proof} Let $C$ be any constant with $0 < C < 1$. Then we compute
%\[R \cdot \int_0^T|\ddot{\gamma}| dt \ge \sup|\gamma| \cdot \int_0^T|\ddot{\gamma}| dt \ge |\int_0^T \langle \gamma, \ddot{\gamma}\rangle dt| = \Big|\int_0^T |\dot{\gamma}|^2 dt - \langle \dot{\gamma},\gamma\rangle|_0^T \Big|\]
%Since $|\dot{\gamma}|^2 = 1$, we know that $\int_0^T |\dot{\gamma}| dt = T$ and $|\langle \dot{\gamma},\gamma\rangle| \le R$. Thus we acquire
%\[
%R \cdot \int_0^T|\ddot{\gamma}| dt \ge |T - 2R| > CT
%\]
%as long as we choose $T$ sufficiently large so that $T - 2R > CT$. 
%\end{proof}

\section{Non-Convex, Dynamically Convex Contact Forms} \label{sec:counter_example} In this section, we use the methods of \cite{abhs2} to construct dynamically convex contact forms with systolic ratio and volume close to $1$, and arbitrarily small and arbitrarily large Ruelle invariant. 

\begin{prop} \label{prop:dyn_cvx_counterexample} For every $\epsilon > 0$, there exists a dynamically convex contact form $\alpha$ on $S^3$ satisfying
\begin{equation}
\label{eqn:dyn_cvx_counterexample_small_ruelle}
\vol{S^3,\alpha} = 1 \qquad \sys(S^3,\alpha) \ge 1 - \epsilon \qquad \Ru(S^3,\alpha) \le \epsilon
\end{equation}
and there exists a dynamically convex contact form $\beta$ on $S^3$ satisfying
\begin{equation}
\label{eqn:dyn_cvx_counterexample_large_ruelle}
\vol{S^3,\beta} = 1 \qquad \sys(S^3,\beta) \ge 1 - \epsilon \qquad \Ru(S^3,\beta) \ge \epsilon^{-1}
\end{equation}
\end{prop}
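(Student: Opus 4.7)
The plan is to realize both contact forms via the open book correspondence of Proposition \ref{prop:open_book}, which associates to a Hamiltonian disk map $\phi:\D\to\D$ a contact form $\alpha_\phi$ on $S^3$ whose relevant invariants, namely volume, minimal Reeb action, rotation numbers of closed orbits, and Ruelle invariant, can be read off from corresponding invariants of $\phi$. The strategy is first to set up this dictionary in enough detail, then to construct two specific families of disk maps realizing the two regimes \eqref{eqn:dyn_cvx_counterexample_small_ruelle} and \eqref{eqn:dyn_cvx_counterexample_large_ruelle}, and finally to estimate the invariants and check dynamical convexity.

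The first step is to develop the disk-map dictionary. I would record four translations: $\vol{S^3,\alpha_\phi}$ is a Calabi-type integral of a generating Hamiltonian of $\phi$; the minimal action of a closed Reeb orbit equals the minimum of the boundary rotation of $\phi$ and the minimal actions of interior fixed points of iterates $\phi^k$; the Conley--Zehnder index of a closed orbit is computed from the rotation number of the corresponding fixed point via Lemma \ref{lem:CZ_index_3_vs_rho}, so the pointwise bound $\rho>1$ on fixed points of all iterates implies dynamical convexity; and finally the Ruelle invariant of $\alpha_\phi$ decomposes as a boundary-rotation term plus an integrated Ruelle density of $\phi$ in the sense of \cite{r1985}. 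This last identity is the genuinely new input of \S \ref{sec:counter_example} and is what allows the open book construction to translate quantitative Ruelle data between $\phi$ and $\alpha_\phi$.

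Next I would construct $\phi=\phi^G\circ\phi^H$ for the small-Ruelle case as depicted in Figure \ref{fig:special_hamiltonian}. The factor $\phi^H$ is the time-$1$ flow of a radial Hamiltonian on $\D$ producing rotation by $2\pi(1+1/n)$ for large integer $n$, so that $\phi^H$ has rotation number very close to $1+1/n$ at every point. The factor $\phi^G$ is supported in a disjoint union $U=\bigsqcup_i D_i$ of small round subdisks and, outside a thin collar inside each $D_i$, agrees with a clockwise twist of angle slightly less than $4\pi$ about the center of $D_i$, built from a radial Hamiltonian on each component as in \S \ref{subsec:radial_Hamiltonians}. Applying Proposition \ref{prop:open_book} together with the Ruelle decomposition of the previous paragraph yields, as outlined in the introduction,
\begin{equation*}
\vol{S^3,\alpha_\phi}\;=\;\pi^2 - 2\sum_i \area(D_i)^2 + o(1),\qquad \Ru(S^3,\alpha_\phi)\;=\;2\pi - 2\sum_i \area(D_i) + o(1),
\end{equation*}
where $o(1)$ collects the errors from the boundary collars and the $1/n$ correction. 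Arranging $\sum_i\area(D_i)\to\pi$ while $\max_i\area(D_i)\to 0$ forces $\Ru$ arbitrarily small and keeps the normalized volume close to $\pi^2$, so after the homothety normalizing the volume to $1$ we obtain $\alpha$ as required. The contact form $\beta$ for the large-Ruelle regime is built symmetrically by replacing the clockwise twist on each $D_i$ by an appropriately tuned counterclockwise twist, so that the $\phi^G$-contribution to $\Ru(\phi)$ adds to rather than cancels the boundary rotation; the same formulas with reversed signs push $\Ru$ to $+\infty$ while preserving the volume and systolic estimates.

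The principal obstacle is verifying dynamical convexity of the resulting contact forms, since individually $\phi^H$ and $\phi^G$ have easily computed rotation numbers but a fixed point of a high iterate $(\phi^G\circ\phi^H)^k$ may pass many times through both $U$ and $\D\setminus U$, and the rotation number is only quasi-additive by Lemma \ref{lem:rho_s_is_quasimorphism}. The plan is to use the cocycle identity \eqref{eqn:linearized_flow_cocycle} together with the explicit defect bound of Lemma \ref{lem:rho_s_is_quasimorphism} to decompose the rotation of an iterate as a sum of contributions over passes through each region, up to an $O(1)$ defect per transition; taking $n$ large (relative to the combinatorial complexity allowed by the small total action) makes the $\phi^H$-contribution dominate and forces $\rho(\gamma)>1$ for every closed orbit, so Lemma \ref{lem:CZ_index_3_vs_rho} gives $\CZ(\gamma)\ge 3$. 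The same rotation bookkeeping simultaneously bounds the minimal action from below by $\pi-o(1)$, yielding $\sys\ge 1-\epsilon$ after volume normalization. Combining the three estimates in both regimes completes the proof.
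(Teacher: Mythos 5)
Your overall strategy matches the paper's: translate the problem through the open book dictionary of Proposition~\ref{prop:open_book}, build $\phi$ as a global rotation $\phi^H$ by angle $2\pi(1+1/n)$ composed with compactly supported clockwise (resp.\ counterclockwise) twists $\phi^G$ on a family of small disks, use Calabi and Ruelle formulas to control $\vol{}$ and $\Ru$, and push the total disk area toward $\pi$ with each disk small. The quantitative targets you derive ($\vol{}\approx \pi^2-2\sum\area(D)^2$, $\Ru\approx 2\pi-2\sum\area(D)$) agree with the paper, and your identification of dynamical convexity as the principal obstacle is correct.

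However, your proposed resolution of that obstacle has a genuine gap. You plan to decompose $\rho$ of the lifted linearized return map of a period-$k$ point into contributions from $\phi^G$ and $\phi^H$ passes, accepting ``an $O(1)$ defect per transition'' from Lemma~\ref{lem:rho_s_is_quasimorphism}, and then argue that ``taking $n$ large\ldots makes the $\phi^H$-contribution dominate.'' This does not work. There are $O(k)$ transitions over a period-$k$ orbit, so the accumulated defect from the quasimorphism inequality is $O(k)$ -- the same order as the $\phi^H$ contribution $k(1+1/n)$, and far larger than the margin $k/n$ that actually has to carry the estimate. Your appeal to ``combinatorial complexity allowed by the small total action'' is also not available: the action of periodic points grows linearly in period (it is bounded below by $\pi/10$ per iterate), so $k$ is unbounded and all high-period orbits must still satisfy $\CZ(\gamma)\ge 3$. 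The paper avoids this entirely by not invoking the quasimorphism defect at all. It uses the \emph{exact} additivity identity~\eqref{eqn:rho_s_additivity_property} for $\rho_s$ while tracking the vector along the orbit, combined with two structural facts you do not use: (1)~$\phi^G$ is radial about each disk center, so its differential sends vectors tangent to circles around the center to vectors of the same type with an explicitly computable rotation, and $\phi^H$ is a rigid rotation, so each factor contributes an \emph{exact} amount with zero defect; and (2)~since $U$ is invariant under rotation by $2\pi/n$ and the disks live inside the sectors $\mathbb{S}(n,k)$, every periodic point other than the center has period $\mathcal L(p)\ge n$. Together with the pointwise bound $r_\phi\ge 1+\tfrac1n+\min{0,R}$ and $R>-2$, these give $\rho(\gamma)=\rho(p)+\mathcal L(p)=\mathcal L(p)\bigl(r_\phi(p)+1\bigr)>\mathcal L(p)/n\ge 1$ (Lemma~\ref{lem:special_hamiltonian_periodic_points}). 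The $+\mathcal L(p)$ framing shift from Proposition~\ref{prop:open_book}(e) is essential here in the small-Ruelle case, since $\rho(p)$ alone is negative; your write-up does not make this term do any visible work. Incorporating the radial structure, the exact additivity, the $\mathcal L(p)\ge n$ period bound, and the framing correction would close the gap.
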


\subsection{Hamiltonian Disk Maps} \label{subsec:hamiltonian_disk_maps} We begin with some notation and preliminaries on Hamiltonian maps of the disk that we will need for the rest of the section. 

\vspace{3pt}

Let $\D \subset \R^2$ denote the unit disk in the plane with ordinary coordinates $(x,y)$ and polar coordinates $(r,\theta)$. We use $\lambda$ and $\omega$ to denote the standard Liouville form and symplectic form.
\[\lambda := \frac{1}{2}r^2 d\theta = \frac{1}{2}(xdy - ydx) \qquad\text{and}\qquad \omega := r dr \wedge d\theta = dx \wedge dy\]
Let $\phi:[0,1] \times \D \to \D$ be a the Hamiltonian flow (for $t \in [0,1]$) generated by a time-dependent Hamiltonian on $\D$ vanishing on the boundary, i.e.
\[H:\R/\Z \times \D \to \R \qquad\text{with}\qquad H|_{\R/\Z\times\partial \D} = 0\]
We let $X_H$ denote the Hamiltonian vector field and adopt the convention that $\iota_{X_H}\omega = dH$. Since $H$ is constant on the boundary $\partial\D$, the Hamiltonian vector field $X_H$ is tangent to $\partial\D$. Thus $\phi$ is a well-defined flow on $\D$. The differential of $\phi$ defines a map $\Phi:\R \times \D \to \Sp$ with $\Phi|_{0 \times \D} = \on{Id}$, which lifts uniquely to a map
\begin{equation}\label{eqn:lifted_differential_Ham_flow} \widetilde{\Phi}:\R \times \D \to \twSp \qquad\text{satisfying}\qquad \widetilde{\Phi}(S+T,z) = \widetilde{\Phi}(T,\phi_S(z))\widetilde{\Phi}(S,z)\end{equation}

There are two key functions on $\D$ associated to the family of Hamiltonian diffeomorphisms $\phi$. First, there is the action and the associated Calabi invariant.

\begin{definition} The \emph{action} $\sigma_\phi:\D \to \R$ and \emph{Calabi invariant} $\Cal(\D,\phi) \in \R$ of $\phi$ are defined by
\begin{equation}
\sigma_\phi = \int_0^1 \phi^*_t(\iota_{X_H}\lambda + H) \cdot dt \qquad\text{and}\qquad \Cal(\D,\phi) = \int_{\D} \sigma \cdot \omega
\end{equation}
\end{definition}
\noindent The action measures the failure of $\phi$ to preserve $\lambda$, as captured by the following formula.
\begin{equation} \label{eqn:action_calabi_identities}
\phi_1^*\lambda - \lambda = d\sigma_\phi\end{equation}

Next, there is the rotation map and the associated Ruelle invariant. To discuss these quantities, we use the following lemma.

\begin{lemma} \label{lem:rotation_map_on_disk_exists} Let $\phi:[0,1] \times \D \to \D$ be the flow of a Hamiltonian $H:\R/\Z \times \D \to \D$ with $\sigma_\phi > 0$. Then the sequences $r_n:\D \to \R$ and $s_n:\D \to \R$ given by
\[r_n(z) := \frac{1}{n}\rho \circ \widetilde{\Phi}(n,z) \qquad\text{and}\qquad s_n(z) := \frac{1}{n}\sum_{k=0}^{n-1} \sigma_\phi \circ \phi^k(z)\]
converge in $L^1(\D)$ to $r_\phi$ and $s_\phi$, respectively. The map $s_k^{-1}$ also converges to $s_\phi^{-1}$ in $L^1(\D)$.
\end{lemma}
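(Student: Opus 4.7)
The plan is to treat each of the three convergence claims separately, reducing them to either the discrete Kingman subadditivity theorem (the version described in Remark \ref{rmk:discrete_kingman}) or the classical Birkhoff ergodic theorem, after observing that $\phi_1:\D\to\D$ preserves the area form $\omega$, since $\phi_t$ is a Hamiltonian flow of a compactly tangent Hamiltonian. Throughout, $\D$ is compact with $\omega$-volume $\pi$.

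For the sequence $r_n$, set $g_n(z):=\rho\circ\widetilde{\Phi}(n,z)=nr_n(z)$. The cocycle property \eqref{eqn:lifted_differential_Ham_flow} together with the quasimorphism constant $C>0$ of $\rho$ (Theorem \ref{thm:quasi_homo_twSp}, via Lemma \ref{lem:rho_s_is_quasimorphism}) gives
\[|g_{n+m}(z)-g_n(z)-g_m(\phi^n z)|\le C\qquad\text{for all }n,m\ge 0,\ z\in \D,\]
so in particular $g_{n+m}\le g_n+\phi^n_*g_m+C$, which is the subadditivity hypothesis. Iterating this inequality yields $|g_n(z)|\le n(\|g_1\|_\infty+C)$, and $\|g_1\|_\infty$ is finite because $z\mapsto \widetilde{\Phi}(1,z)$ is continuous on the compact disk and $\rho$ is continuous on $\twSp$. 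Hence $\int_\D g_n\cdot\omega\ge -Dn$ for $D:=\pi(\|g_1\|_\infty+C)$, verifying the integrability hypothesis. The discrete Kingman theorem (Remark \ref{rmk:discrete_kingman}) then produces a limit $r_\phi\in L^1(\D)$ for $r_n=g_n/n$, both in $L^1$ and almost everywhere.

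For the sequence $s_n$, the function $\sigma_\phi$ is continuous on $\D$ (being the integral along $t\in[0,1]$ of the pullback of a smooth $1$-form contracted and combined with $H$), hence lies in $L^\infty(\D)\subset L^1(\D,\omega)$. Because $\phi_1$ preserves $\omega$, Birkhoff's ergodic theorem applies to the $\phi_1$-invariant measure space $(\D,\omega)$ and the observable $\sigma_\phi\in L^1(\D,\omega)$, yielding $L^1$ and almost-everywhere convergence of the Birkhoff averages $s_n$ to a limit $s_\phi\in L^1(\D)$. Equivalently, one can apply the discrete Kingman theorem to the exactly additive sequence $h_n:=\sum_{k=0}^{n-1}\sigma_\phi\circ\phi^k$, which trivially satisfies the hypotheses with constant $C=0$ and is bounded in $L^\infty$.

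For the final claim, the hypothesis $\sigma_\phi>0$ combined with the continuity of $\sigma_\phi$ on the compact disk $\D$ upgrades to a uniform bound $\sigma_\phi\ge c>0$. Averaging preserves this lower bound, so $s_n\ge c$ for every $n$, and passing to the a.e.\ limit gives $s_\phi\ge c$ almost everywhere. Consequently $s_n^{-1}$ and $s_\phi^{-1}$ are uniformly dominated by the constant function $c^{-1}$, and the pointwise a.e.\ convergence $s_n\to s_\phi$ of Step~2 implies $s_n^{-1}\to s_\phi^{-1}$ almost everywhere; the dominated convergence theorem then upgrades this to convergence in $L^1(\D)$. The principal obstacle is the verification of the integrability hypothesis in the first step, which I expect will be handled by the iteration argument above that converts the quasimorphism property of $\rho$ and the compactness of $\D$ into a linear two-sided bound on $g_n$; once this is in place, Steps~2 and~3 are essentially immediate.
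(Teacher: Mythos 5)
Your proof is correct and follows essentially the same route as the paper: apply the discrete Kingman theorem (Remark \ref{rmk:discrete_kingman}) to $g_n = \rho\circ\widetilde{\Phi}(n,-)$ using the cocycle property and the quasimorphism bound, apply Birkhoff's theorem to $s_n$, and upgrade the a.e.\ convergence of $s_n^{-1}$ to $L^1$ via dominated convergence using the uniform lower bound $\sigma_\phi\ge c>0$. The only cosmetic difference is that the paper shifts $g_n$ by a constant to obtain exact subadditivity, whereas you invoke the weakened subadditivity (up to an additive constant) that the paper's version of Kingman explicitly permits; this is immaterial.
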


\begin{proof} We apply Kingman's sub-additive ergodic theorem (see \cite{k1973} and Remark \ref{rmk:discrete_kingman}) to the map $g_n = r_n + C$ for sufficiently large $C > 0$. Applying (\ref{eqn:lifted_differential_Ham_flow}) and the quasimorphism property of $\rho$, we find that
\[g_{m + n} \le g_m + g_n \circ \phi^m\]
By Kingman's ergodic theorem, this implies that $\frac{g_n}{n}$ has a limit $r_\infty$ in $L^1(\D)$. Since $\|g_n - r_n\|_{L^1}$ is bounded, we acquire the same result for $r_n$.

\vspace{3pt}

By Birkhoff's ergodic theorem, $s_n$ converges to a limit $s_\infty \in L^1(\D)$. Note that for some $c > 0$, we have
\[c^{-1} \le \sigma_\phi \le c \quad\text{and therefore}\quad c^{-1} \le s_n \le c\]
Thus $s_\infty > 0$ pointwise almost everywhere and $s_\infty^{-1}$ is well-defined almost everywhere. Since $|s_n|^{-1} < c$, we can apply the dominated convergence theorem to conclude that $s_\infty^{-1}$ is integrable and $s_n^{-1} \to s_\infty^{-1}$ in $L^1$. A similar argument applies to $r_n/s_n$, which converges to $r_\infty/s_\infty$.
\end{proof}

\begin{definition} \label{def:calabi_invariant} The \emph{rotation} $r_\phi:\D \to \R$ and \emph{Ruelle invariant} $\Ru(\D,\phi) \in \R$ of $\phi$ are defined by
\begin{equation}
r_\phi := \lim_{n \to \infty} r_n \qquad\text{and}\qquad \Ru(\D,\phi) = \int_\D r_\phi \cdot \omega 
\end{equation}
\end{definition}

\begin{remark} Our Ruelle invariant $\Ru(\D,\phi)$ of a symplectomorphism of the disk agrees with the two-dimensional version of the invariant introduced by Ruelle in \cite{r1985}.
\end{remark}

\noindent The action, rotation, Calabi invariant and Ruelle invariant  depend only on the homotopy class of $\phi$ relative to the endpoints, or equivalently the element in the universal cover of $\on{Ham}(\D,\omega)$.

\vspace{3pt}

We conclude this review with a discussion of periodic points and their invariants.

\begin{definition} A \emph{periodic point} $p$ of $\phi:\D \to \D$ is a point such that $\phi^k(p) = p$ for some $k \ge 1$. The period $\mathcal{L}(p)$, action $\mathcal{A}(p)$ and rotation number $\rho(p)$ of $p$ are given, respectively, by
\begin{equation}
\mathcal{L}(p) := \min{j > 0| \phi^j(p) = p} \qquad \mathcal{A}(p) = \sum_{i=0}^{\mathcal{L}(p)-1} \sigma_\phi \circ \phi^i(p) \qquad \rho(p) := \rho \circ \widetilde{\Phi}(\mathcal{L}(p),p)
\end{equation}
Note that the rotation number can also be written as $\rho(p) = \mathcal{L}(p) \cdot r_\phi(p)$.
\end{definition}

\subsection{Open Books Of Disk Maps} \label{subsec:open_books} We next review the construction of contact forms on $S^3$ from symplectomorphisms of the disk, using open books.  

\begin{construction} \label{con:open_book} Let $H:\R/\Z \times \D \to \R$ be a Hamiltonian with flow $\phi:[0,1] \times \D \to \R$ such that
\begin{itemize}
\item[(i)] Near $\partial \D$, $H$ is of the form $H(t,r,\theta) = B \cdot \pi(1 - r^2)$ for some $B > 0$.
\item[(ii)] The action function $\sigma_\phi$ of the Hamiltonian is positive everywhere.
\end{itemize}
We now construct the \emph{open book} contact form $\alpha$ on $S^3$ associated to $(\D,\phi)$. We proceed by producing two contact manifolds $(U,\alpha)$ and $(V,\beta)$, then gluing them by a strict contactomorphism. 

\vspace{3pt}

To construct $U$, we consider the contact form $dt + \lambda$ on $\R \times \D$. Due to the identity $d\sigma_\phi = \phi^*_1\lambda - \lambda$ in (\ref{eqn:action_calabi_identities}), the map $f$ defined by
\[
f:\R \times \D \to \R \times \D \qquad f(t,z) = (t - \sigma_\phi(z),\phi_1(z))
\]
is a strict contactomorphism. We form the following quotient space.
\[
U = \R \times \D/\sim \qquad\text{defined by } (t,z) \sim f(t,z)
\]
Since $\sigma_\phi$ is strictly positive by assumption (ii) in Construction \ref{con:open_book}, this quotient is a smooth manifold. The contact form $dt + \lambda$ descends to a contact form $\alpha$ on $U$ because $f$ is a strict contactomorphism. Note that a fundamental domain of this quotient is given by
\[\Omega = \{(t,z)|0 \le t \le \sigma_\phi(z)\}\]
We observe that the Reeb vector field is simply given by $R=\partial_t$. Thus the disk $0\times\D\subset U$ is a surface of section for the Reeb flow on $U$ with first return map $\phi_1$.

\vspace{3pt}

To construct $V$, we choose a small $\epsilon > 0$ and let
\[V := \R/\pi\Z \times \D(\epsilon) \qquad \beta := (1 - r^2)dt + \frac{B}{2}r^2d\theta\]
Here $\D(\epsilon) \subset \C$ is the disk of radius $\epsilon$, $t$ is the $\R/\pi \Z$ coordinate and $(r,\theta)$ are radial coordinates on $\D(\epsilon)$. There is a strict contactomorphism $\Psi$ identifying subsets of $U$ and $V$, given by
\[\Psi:V \setminus (\R/\pi\Z \times 0) \to U \qquad\text{with}\qquad \Psi(t,r,\theta) := (\frac{B}{2} \cdot \theta ,\sqrt{1 - r^2}, 2t - B\theta)\]
We now define $Y = \on{int}(U) \cup_\Psi V$ as the gluing of the interior of $U$ and $V$ via $\Phi$, and $\alpha$ as the inherited contact form. Since $\phi$ is Hamiltonian isotopic to the identity, the resulting contact manifold $(Y,\ker\alpha)$ is contactomorphic to standard contact $S^3$.
\end{construction}

\begin{prop}[Open Book] \label{prop:open_book} 
Let $H$ and $\phi$ be as in Construction \ref{con:open_book}. Then there exists a contact form $\alpha$ on $S^3$ with the following properties.
\begin{itemize}
	\item[(a)] (Surface Of Section) There is an embedding $\iota:\D \to S^3$ such that $\iota(\D)$ is a surface of section with return map $\phi_1$ and first return time $\sigma$, and such that $\omega = \iota^*d\alpha$.
	\vspace{3pt}
	\item[(b)] (Volume) The volume of $(S^3,\alpha)$ is given by the Calabi invariant of $(\D,\phi)$, i.e.
	\[\vol{S^3,\alpha} = \Cal(\D,\phi)\]
	\item[(c)] (Ruelle) The Ruelle invariant of $(S^3,\alpha)$ is given by a shift of the Ruelle invariant of $(\D,\phi)$.
	\[\Ru(S^3,\alpha) = \Ru(\D,\phi) + \pi\]
	\item[(d)] (Binding) The binding $b = \iota(\partial \D)$ is a Reeb orbit of action $\pi$ and rotation number $1 + 1/B$. 
	\vspace{3pt}
	\item[(e)] (Orbits) Every simple orbit $\gamma \subset S^3 \setminus b$ corresponds to a periodic point $p$ of $\phi$ that satisfies
	\[\on{lk}(\gamma,b) = \mathcal{L}(p) \qquad \mathcal{A}(\gamma) = \mathcal{A}(p) \qquad \rho(\gamma) = \rho(p) + \mathcal{L}(p)\]
\end{itemize}
\end{prop}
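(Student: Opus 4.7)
The plan is to establish (a), (b), (d), (e), (c) in that order; the Ruelle formula in (c) depends on the rotation-shift identity developed for (e). Throughout I will work with the mapping-torus piece $U$ carrying $\alpha = dt + \lambda$, and the tubular piece $V$ glued in via the strict contactomorphism $\Psi$.

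For (a) and (b), I read off the structure directly from $U$. On $U$ the Reeb field of $dt + \lambda$ is $\partial_t$, transverse to $\iota(\D) := \{0\} \times \D$; Reeb trajectories reach $t = \sigma_\phi(z)$ after Reeb time $\sigma_\phi(z)$ and are identified with $(0, \phi_1(z))$, so the first return map is $\phi_1$, the first return time is $\sigma_\phi$, and $\iota^* d\alpha = d\lambda = \omega$. Since $\lambda \wedge d\lambda = 0$ on $\D$, we have $\alpha \wedge d\alpha = dt \wedge \omega$ on $U$, so Fubini over the fundamental domain $\Omega = \{0 \le t \le \sigma_\phi(z)\}$ gives $\vol{\on{int}(U), \alpha} = \int_\D \sigma_\phi \, \omega = \Cal(\D, \phi)$; the binding having measure zero, this equals $\vol{S^3, \alpha}$.

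For (d), I will solve $\beta(R) = 1$ and $\iota_R d\beta = 0$ explicitly on $V$. Writing $R = a \partial_t + b \partial_r + c \partial_\theta$ and using $d\beta = -2r\, dr \wedge dt + B r\, dr \wedge d\theta$ gives $b = 0$, $a = 1$, $c = 2/B$, so $R = \partial_t + (2/B) \partial_\theta$; the binding $\{r = 0\}$ is a Reeb circle of period $\pi$ and action $\int \beta|_b = \pi$. For the rotation number I will pin down the canonical homotopy class of trivialization of $\xi$ on $S^3$ (unique since $H^1(S^3;\Z) = 0$) and show, by unpacking the transition via $\Psi(t,r,\theta) = (B\theta/2, \sqrt{1-r^2}, 2t - B\theta)$, that the global framing on $S^3$ extended to $V$ differs from the naive meridional framing near the binding by one full twist per loop. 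The linearization of $R$ in Cartesian coordinates on the binding rotates at rate $2/B$, so over period $\pi$ the angle advances by $2\pi/B$; adding the $+1$ from the framing twist yields $1 + 1/B$.

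For (e), any Reeb orbit $\gamma$ disjoint from $b$ meets $\iota(\D)$ transversally, and by (a) the intersection points form a $\phi_1$-orbit of period $\mathcal{L}(p)$; the linking number equals this number of intersections, and summing first-return times gives $\mathcal{A}(\gamma) = \sum_{k=0}^{\mathcal{L}(p)-1} \sigma_\phi \circ \phi_1^k(p) = \mathcal{A}(p)$. In the page framing on $\iota(\D)$, the linearized return after $\mathcal{L}(p)$ hits equals $\widetilde{\Phi}(\mathcal{L}(p), p)$ with rotation $\rho(p)$; the same meridional twist that appeared in (d) contributes $+\mathcal{L}(p)$ when passing to the global trivialization on $S^3$, giving $\rho(\gamma) = \rho(p) + \mathcal{L}(p)$. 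Finally, for (c), I evaluate $\rot_\tau$ pointwise: flowing from $(0,z)$ for Reeb time $T_n := \sum_{k=0}^{n-1} \sigma_\phi \circ \phi_1^k(z) = n \cdot s_n(z)$ completes exactly $n$ disk returns, so by the framing shift
\[
\rho(\widetilde{\Phi}_\tau(T_n, (0,z))) = \rho(\widetilde{\Phi}(n, z)) + n = n \cdot (r_n(z) + 1),
\]
whence $\rot_\tau(z) = (r_\phi(z) + 1)/s_\phi(z)$. Fubini with $\alpha \wedge d\alpha = dt \wedge \omega$ reduces the Ruelle integral to $\int_\D (r_\phi + 1) \sigma_\phi / s_\phi \cdot \omega$; since $r_\phi + 1$ and $s_\phi$ are both $\phi_1$-invariant, so is $f := (r_\phi + 1)/s_\phi$, and the $\omega$-preservation of $\phi_1$ together with Birkhoff's theorem yield $\int f \sigma_\phi \, \omega = \int f s_\phi \, \omega$ for any $\phi_1$-invariant $f$. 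Applying this to $f = (r_\phi+1)/s_\phi$ simplifies the integral to $\int_\D (r_\phi + 1) \omega = \Ru(\D, \phi) + \pi$, as desired.

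The principal obstacle is the ``$+1$ per return'' framing shift underpinning both (d) and (e). This requires identifying the canonical trivialization of $\xi$ on $S^3$, extending the page framing on $\iota(\D)$ across the binding, and carefully tracking how the twisting encoded in $\Psi$ relates the page framing to the global framing. Once this shift is established, the remaining parts follow by straightforward Fubini and Birkhoff arguments on the mapping torus.
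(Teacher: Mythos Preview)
Your proposal is correct and follows essentially the same architecture as the paper: read off (a) and (b) from the fundamental domain $\Omega$, compute the Reeb field on $V$ for (d), derive the orbit correspondence in (e) from the surface-of-section data plus a ``$+1$ per return'' framing shift, and deduce (c) by combining that shift with the Birkhoff/invariance trick $\int f\,\sigma_\phi\,\omega = \int f\,s_\phi\,\omega$ for $\phi_1$-invariant $f$.

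The one place the paper is more concrete than your outline is in nailing down the framing shift. Rather than extracting the twist from the gluing map $\Psi$ as you propose, the paper \emph{builds} an explicit trivialization on $U$,
\[
\tau(t,z) \;=\; \exp\!\big(2\pi i\, t/\sigma_\phi(z)\big)\circ \Phi\big(t/\sigma_\phi(z),z\big)\circ \Pi_{\D},
\]
which by design inserts one full rotation per page crossing, and then checks via a self-linking computation (Lemma~\ref{lem:open_book_trivializations}) that $\tau$ is homotopic to the global trivialization of $\xi$ on $S^3$. This makes the identity $\rho\circ\widetilde{\Phi}_\tau(T_k,\iota(p)) = \rho\circ\widetilde{\Phi}(k,p)+k$ a one-line calculation (Lemma~\ref{lem:open_book_trivializations_rotation_number}), from which both (e) and the pointwise formula $\iota^*\rot_\tau = (r_\phi+1)/s_\phi$ for (c) follow immediately. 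Your route via $\Psi$ would land in the same place, but the paper's explicit $\tau$ lets one avoid tracking the page framing through the gluing region altogether; for (d) the paper instead compares the natural $V$-trivialization to the global one via relative $c_1$ and the self-linking number of the binding.
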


In order to relate various invariants associated to $(S^3,\alpha)$ and its Reeb orbits to corresponding structures for $(\D,\phi)$, we need to introduce a certain trivialization of $\xi$ over $U$.

\begin{construction} \label{con:open_book_trivialization} Let $(U,\xi|_U)$ be as in Construction \ref{con:open_book}. We let $\tau$ denote the continuous trivialization of $\xi|_U$ defined as follows. On the fundamental domain $\Omega$, we let
\begin{equation}\label{eqn:open_book_trivialization}
\tau:\Omega \to \on{Hom}(\xi|_U,\R^2) \qquad\text{given by}\qquad
\tau(t,z) := \exp(2\pi i t/\sigma_\phi(z)) \circ \Phi(t/\sigma_\phi(z),z) \circ \Pi_\D
\end{equation}
Here $\Phi:[0,1] \times \D \to \D$ is the differential $d\phi$ of the flow $\phi:[0,1] \times \D \to \D$ and $\Pi_\D:\xi \to T\D$ denotes projection to the (canonically trivial) tangent bundle $T\D$ of $\D$. Note also that $\circ$ denotes composition of bundle maps.

\vspace{3pt}

To check that $\tau$ descends to a well-defined trivialization on $U$, we must check that it is compatible with the quotient map $f:\R \times \D \to \R \times \D$. Indeed, we have
\[\tau(\sigma_\phi(z),z) = \Phi(1,z) \circ \Pi_\D = \tau(0,\phi_1(z)) \circ df_{(\sigma_\phi(z),z)}\]
This precisely states that projection commutes with the isomorphism identifying tangent spaces in the quotient, so $\tau$ descends from $\Omega$ to $U$.
\end{construction}

\begin{lemma} \label{lem:open_book_trivializations} Let $\tau:\xi|_U \to \R^2$ be the trivialization in Construction \ref{con:open_book_trivialization}. Then
\begin{itemize}
	\item[(a)] The restriction $\tau|_K$ of $\tau$ to any compact subset $K \subset \on{int}(U)$ of the interior of $U$ is the restriction of a global trivialization of $\xi$ on $S^3$.
	\vspace{3pt}
	\item[(b)] The local rotation number $\rot_\tau:U \to \R$ of $(U,\alpha|_U)$ with respect to $\tau$ agrees with the restriction of the local rotation number $\rot:S^3 \to \R$ of $(S^3,\alpha)$ with respect to the global trivialization.
\end{itemize}
\end{lemma}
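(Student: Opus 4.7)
The plan is as follows. For part (a), I would take any global trivialization $\tilde\tau_0$ of $\xi$ on $S^3$ (which exists because $c_1(\xi)=0$ makes $\xi$ symplectically trivial) and modify it so that the modification agrees with $\tau$ on $K$. Writing $g := \tau\cdot\tilde\tau_0^{-1} : U\to\Sp$, it suffices to extend $g|_K$ to a continuous map $\widetilde g : S^3\to\Sp$, since then $\tilde\tau := \tilde\tau_0\cdot\widetilde g$ is a global trivialization with $\tilde\tau|_K=\tau|_K$. Since $\phi_1$ is isotopic to the identity, the mapping torus $U$ is diffeomorphic to $S^1\times\D$, hence homotopy equivalent to $S^1$; combined with $\Sp\simeq S^1$, the homotopy class of $g$ reduces to a single integer, namely the winding of $g$ along a loop generating $\pi_1(U)$. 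I would compute this integer using the formula (\ref{eqn:open_book_trivialization}) together with the near-boundary identities $\sigma_\phi\equiv B\pi$ and $\Phi(s,z)=\exp(2\pi iBs)$ coming from $H=B\pi(1-r^2)$: the prefactor $\exp(2\pi it/\sigma_\phi(z))$ in the definition of $\tau$ was built in precisely so that the winding it contributes cancels against the shift from the mapping torus identification, yielding zero. With $g$ null-homotopic on $U$, a cut-off in a collar of $\partial U$ disjoint from $K$ replaces $g$ there by a constant and extends by the same constant across $V$, producing the desired $\widetilde g$.

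For part (b), with $\tilde\tau$ as in (a) satisfying $\tilde\tau|_K=\tau|_K$, set $h := \tau\cdot\tilde\tau^{-1}:U\to\Sp$. Then $h$ equals the identity on $K$ and, by the same argument as in (a), is null-homotopic on $U$; therefore it lifts to a continuous map $\widetilde h : U\to\twSp$, which is bounded because $U$ is compact. Any Reeb trajectory through $z\in K$ remains inside $U$ for all time, since the Reeb vector field on $U$ is $\partial_t$ and each disk fiber is preserved by $\phi_1$. Applying the Trivializations step in the proof of Proposition \ref{prop:general_ruelle_invariant}(b) pointwise at $z$ yields
\[
\left|\frac{\rho\circ\widetilde\Phi_\tau(T,z)}{T}-\frac{\rho\circ\widetilde\Phi_{\tilde\tau}(T,z)}{T}\right| \le \frac{2C+2\sup_U|\rho\circ\widetilde h|}{T},
\]
which tends to $0$ as $T\to\infty$. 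Passing to the limit in the definition (\ref{eqn:general_ruelle_invariant}) of $\rot_\tau$ then gives $\rot_\tau(z)=\rot(z)$ for every $z\in K$.

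The main obstacle is the winding computation in (a): verifying that the single integer in $\pi_1(\Sp)=\Z$ is zero. This requires writing down $\tau$ explicitly along a generator of $\pi_1(U)$ --- for instance a near-boundary meridian, which closes because $\sigma_\phi\equiv B\pi$ and $\phi_1$ acts as a rigid rotation there --- and tracking how the rotations from $\exp(2\pi it/\sigma_\phi(z))$ and $\Phi(t/\sigma_\phi(z),z)$ combine under the identification $f$ that defines $U$. Once this homotopy triviality is secured, the remainder of both (a) and (b) is standard: a cut-off extension argument for (a), and the pointwise quasimorphism estimate already used in the proof of Proposition \ref{prop:general_ruelle_invariant}(b) for (b).
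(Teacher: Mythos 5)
Your strategy for both parts is essentially the paper's: reduce (a) to a single winding number over a generator of $\pi_1(\on{int}(U))$ --- the near-boundary meridian $\gamma$ --- comparing $\tau$ with a global trivialization, then get (b) from a global trivialization agreeing with $\tau$ on a compact Reeb-invariant subset $U(\delta)\supset K$. The paper phrases the obstruction in (a) via self-linking numbers, showing $\on{sl}(\gamma,\tau) = -1 = \on{sl}(\gamma)$, which is equivalent to showing your $g = \tau\cdot\tilde\tau_0^{-1}$ has winding $0$ along $\gamma$.

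There is, however, a genuine gap in your plan for (a). You propose to compute the winding of $g$ from the formula for $\tau$ alone, expecting that the prefactor $\exp(2\pi i t/\sigma_\phi(z))$ was ``built in precisely so that the winding it contributes cancels against the shift from the mapping torus identification, yielding zero.'' The cancellation that actually occurs is between $\Phi(t/\sigma_\phi(z),z)$ and the angular rotation of $\Pi_\D$ forced by the identification $f$; the prefactor then contributes a \emph{residual} $+1$. Concretely, pushing $\gamma$ into the Seifert disk $\Sigma = 0\times\D(\epsilon)$ gives a section $\eta$ of $\xi$ with $\tau\circ\eta(\theta) = e^{i\theta}$, so $\on{sl}(\gamma,\tau) = -1$, not $0$. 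For $g$ to have winding zero you must also determine what $\tilde\tau_0$ does along $\gamma$, namely that $\on{sl}(\gamma,\tilde\tau_0) = \on{sl}(\gamma) = -1$. That computation --- the characteristic foliation $\xi\cap\Sigma$ on the Seifert disk has a single positive elliptic singularity --- is an independent input that does not come out of the formula for $\tau$, and it is missing from your sketch. Without it, the computation you describe produces $1$ rather than $0$, and the extension step stalls. The paper's self-linking framework handles this symmetrically: $\on{sl}(\gamma,-)$ classifies trivializations over $U(\delta)$, and both values are computed.

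Part (b) is essentially right, with a small slip: $h = \tau\cdot\tilde\tau^{-1}$ is defined only on $\on{int}(U)$ (not on the compact $U$, since $\tilde\tau$ lives on $S^3$), and $\on{int}(U)$ is not compact; the boundedness you need is of $\widetilde{h}$ on a compact Reeb-invariant $U(\delta)\supset K$, which is where the trajectory through $z\in K$ actually lives. The paper's proof of (b) avoids the quasimorphism estimate entirely by taking $\sigma$ with $\sigma|_{U(\delta)}=\tau|_{U(\delta)}$, whence $\rot_\sigma = \rot_\tau$ on $U(\delta)$ is immediate from Reeb-invariance; your estimate is a valid but more roundabout version of the same argument.
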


\begin{proof} Let $V = \R/\pi\Z \times \D(\epsilon)$ and $\Psi$ be as in Construction \ref{con:open_book}. For any $\delta < \epsilon$, we let $V(\delta) \subset V$ and $U(\delta) \subset U$ denote
\[V(\delta) := \R/\pi\Z \times \D(\delta) \subset V \qquad\text{and}\qquad U(\delta) := \on{int}(U) \setminus \on{int}(\Psi(V(\delta)))\]
The sets $U(\delta)$ are an exhaustion of $\on{int}(U)$ by compact, Reeb-invariant contact submanifolds.

\vspace{3pt}

To show (a), we assume that $K = U(\delta)$. The homotopy classes of trivializations $\mathcal{T}$ of $\xi$ over $U(\delta)$ are in bijection with $H^1(U(\delta);\Z) \simeq \Z$. A map to $\Z$ classifying elements of $\mathcal{T}$ is given by
\begin{equation}
\label{eqn:classifying_map_for_trivializations}
\mathcal{T} \to \Z \qquad\text{given by}\qquad \sigma \mapsto \on{sl}(\gamma,\sigma)
\end{equation}
Here $\on{sl}(\gamma,\sigma)$ is the self-linking number (in the trivialization $\sigma$) of the following transverse knot.
\[\gamma:\R/2\pi\Z \to U(\delta) \qquad \gamma(\theta) = \Psi(0,\epsilon,\theta) = (\frac{B\theta}{2}, \sqrt{1 - \epsilon^2}, - B\theta)\]
The knot $\gamma$ bounds a Seifert disk $\Sigma = 0 \times \D(\epsilon)$ in $V \subset S^3$. The line field $\xi \cap \Sigma$ has a single positive elliptic singularity, so the self-linking number of the boundary $\gamma$ with respect to the global trivialization is $\on{sl}(\gamma) = -1$.

\vspace{3pt}

To compute $\on{sl}(\gamma,\tau)$, we push $\gamma$ into $\Sigma$ along a collar neighborhood to acquire a nowhere zero section $\eta:\R/2\pi\Z \to \xi$ and then compose with $\tau$ to acquire a map $\tau \circ \eta:\R/2\pi\Z \to \R^2 \setminus 0$. Up to isotopy through nowhere zero sections, we can compute that
\[\tau \circ \eta(\theta) = e^{i\theta} \in \C = \R^2 \]
On the other hand, the self-linking number can be computed as the negative of the winding number of this map.
\[\on{sl}(\gamma,\tau) = -\on{wind}(\tau \circ \eta) = -1\]
Since the map \eqref{eqn:classifying_map_for_trivializations} classifies elements of $\mathcal{T}$, this proves that on $U(\delta)$ the trivialization $\tau$ agrees with the restriction of a global trivialization.

\vspace{3pt}

To show (b), note that since $U(\delta)$ is compact, we can choose a global trivialization of $\xi$ on $S^3$
\[\sigma:\xi \simeq \R^2 \qquad \text{such that}\qquad \sigma|_{U(\delta)} = \tau|_{U(\delta)}\]
By Proposition \ref{prop:general_ruelle_invariant}(c), $\rot_\sigma = \rot$ on $S^3$ and so  the local rotation numbers satisfy
\[\rot|_{U(\delta)} = \rot_\sigma|_{U(\delta)} = \rot_\tau|_{U(\delta)}\]
Since this holds for any $\delta$, this shows (b) on all of $\on{int}(U)$. 
\end{proof}

\begin{remark} It is possible to define the trivialization $\tau$ of $\xi|_U$ in such a way that it is not only homotopic but actually equal to the restriction of a global trivialization of the contact structure on $S^3$. We did not do this in order to avoid complicated formulas in the definition of $\tau$. \end{remark}

The following lemma relates the rotation number of the lifted linearized Reeb flow $\widetilde{\Phi}_\tau:\R\times U\rightarrow \twSp$ on $(U,\alpha)$ to the rotation number of the lifted linearized flow $\widetilde{\Phi}:\R\times\D\rightarrow\twSp$ of the Hamiltonian flow $\phi$ on $\D$.

\begin{lemma}
\label{lem:open_book_trivializations_rotation_number}
Let $\iota:\D\rightarrow U$ denote the inclusion of the disk $0\times\D\subset U$. Let $p\in\D$ and consider the Reeb trajectory $\gamma:\R\rightarrow U$ satisfying $\gamma(0)=\iota(p)$. Let $0=T_0 < T_1 < T_2 < \dots$ denote the non-negative times at which the trajectory $\gamma$ intersects the disk $\iota(\D)$. Then
\begin{equation*}
\rho\circ \widetilde{\Phi}_\tau(T_k,\iota(p)) = \rho\circ \widetilde{\Phi}(k,p) + k
\end{equation*}
for all non-negative integers $k$.
\end{lemma}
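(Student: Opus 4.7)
The plan is to lift the Reeb trajectory through $\iota(p)$ to the universal cover $\R\times\D$ (of which $U$ is the quotient by $(t,z)\sim f(t,z)=(t-\sigma_\phi(z),\phi_1(z))$), where the Reeb flow is just translation in $t$. Over each fundamental interval $[T_{j-1},T_j]$, the trajectory point $(t,p)$ in the lift corresponds to $(t-T_{j-1},\phi^{j-1}(p))$ in the $j$-th fundamental domain via $f^{j-1}$, and the tangent identification $df^{j-1}$ restricts, under $\Pi_\D$, to $d\phi_1^{j-1}=\Phi(j-1,p)$ on $T\D$. This last projection identity is essentially the content of the action identity $\phi_1^*\lambda-\lambda=d\sigma_\phi$ from \eqref{eqn:action_calabi_identities}, which ensures that $df$ preserves $\xi$ and projects to $d\phi_1$ on $T\D$.

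Combining the trivialization formula \eqref{eqn:open_book_trivialization}, the triviality of the Reeb flow on $\xi$ in the lift, and the cocycle identity $\Phi(s,\phi^{j-1}(p))\cdot\Phi(j-1,p)=\Phi(s+j-1,p)$, I expect to obtain, for $t\in[T_{j-1},T_j]$ and $s_j=(t-T_{j-1})/\sigma_\phi(\phi^{j-1}(p))$, the explicit formula
\[
\Phi_\tau(t,\iota(p))=e^{2\pi i s_j}\cdot\Phi(s_j+j-1,p).
\]
After verifying continuity at each seam $t=T_j$, the substitution $u=s_j+j-1\in[j-1,j]$ shows that the path $t\mapsto\Phi_\tau(t,\iota(p))$ on $[0,T_k]$ is an orientation-preserving reparametrization of the path $C:[0,k]\to\Sp$ given by $C(u)=e^{2\pi i u}\cdot\Phi(u,p)$. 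Hence $\widetilde{\Phi}_\tau(T_k,\iota(p))$ is the homotopy class of $C$ in $\twSp$.

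Pointwise one factors $C(u)=D(u)\cdot B(u)$ with $D(u)=e^{2\pi i u}$ and $B(u)=\Phi(u,p)$, so in $\twSp$ we get $\widetilde{\Phi}_\tau(T_k,\iota(p))=\widetilde{D}\cdot\widetilde{\Phi}(k,p)$. For integer $k$, $D$ is a loop at $\on{Id}\in\Sp$ representing the $k$-th power of the generator of $\pi_1(\Sp)$, so $\widetilde{D}$ lies in the center of $\twSp$ and $\rho(\widetilde{D})=k$ by the normalization \eqref{eqn:quasi_homo_twSp}. Centrality combined with the quasimorphism property and homogeneity of $\rho$ then yields the exact additivity $\rho(\widetilde{D}\cdot\widetilde{\Phi}(k,p))=\rho(\widetilde{D})+\rho\circ\widetilde{\Phi}(k,p)$: apply the quasimorphism bound to $(\widetilde{D}\widetilde{\Phi}(k,p))^n=\widetilde{D}^n\widetilde{\Phi}(k,p)^n$ and divide by $n\to\infty$. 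This yields the claimed identity.

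The main delicate step is the second one: tracking the quotient identifications $df^{j-1}$ on $\xi$, verifying continuity of the pointwise formula for $\Phi_\tau$ across the seams between fundamental domains, and correctly collapsing the cocycle into the clean form $e^{2\pi i u}\Phi(u,p)$. Once that pointwise formula is in hand, the group-theoretic conclusion is essentially immediate from the normalization of $\rho$ on $\twU$ and centrality of loops in $\twSp$.
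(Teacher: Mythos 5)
Your proposal is correct and follows essentially the same route as the paper's proof. The paper computes segment by segment, showing $\widetilde{\Phi}_\tau(L_i,y_i) = \widetilde{\Xi}\cdot\widetilde{\Phi}(1,p_i)$ with $\widetilde{\Xi}$ the central unit-rotation loop, then collapses the product using centrality; you instead derive the global pointwise formula $\Phi_\tau(t,\iota(p)) = e^{2\pi i s_j}\Phi(s_j+j-1,p)$, reparametrize to $C(u)=e^{2\pi i u}\Phi(u,p)$, and factor once — the same arithmetic repackaged, with the added benefit that you actually justify the exact additivity $\rho(\widetilde{D}\widetilde{\Psi})=\rho(\widetilde{D})+\rho(\widetilde{\Psi})$ for central $\widetilde{D}$ (via $(\widetilde{D}\widetilde{\Psi})^n = \widetilde{D}^n\widetilde{\Psi}^n$, homogeneity, and the quasimorphism bound), a step the paper states without proof.
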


\begin{proof}
We abbreviate
\[p_i=\phi^i(p) \qquad y_i = \iota(p_i) = \gamma(T_i) \qquad L_i = T_{i+1} - T_i = \sigma_\phi(p_i)\]
Note that the lifted linearized Reeb flow with respect to $\tau$ at time $T_k$ can be written as
\begin{equation} \label{eqn:open_book_orbit_rotation_a}
\widetilde{\Phi}_\tau(T_k,y_0) = \widetilde{\Phi}_\tau(L_{k-1},y_{k-1}) \widetilde{\Phi}_\tau(L_{k-2},y_{k-2}) \dots \widetilde{\Phi}_\tau(L_0,y_0)
\end{equation} 
The linearized Reeb flow $\widetilde{\Phi}_\tau(L_i,y_i)$ takes place along a trajectory connecting $(0,p_i)$ to $(\sigma_\phi(p_i),p_i)$ in the fundamental domain $\Omega$. We may be directly compute from (\ref{eqn:open_book_trivialization}) that
\begin{equation} \label{eqn:open_book_orbit_rotation_b}
\Phi_\tau(t,y_i) = \exp(2\pi i t/\sigma_\phi(p_i)) \circ \Phi(t/\sigma_\phi(z),p_i) \quad\text{and so}\quad \widetilde{\Phi}_\tau(L_i,y_i) = \widetilde{\Xi} \cdot \widetilde{\Phi}(1,p_i)
\end{equation}
Here $\widetilde{\Xi}$ is the unique lift of $\on{Id} \in \Sp$ with $\rho(\widetilde{\Xi}) = 1$. This is a central element of $\twSp$, so combining (\ref{eqn:open_book_orbit_rotation_a}) and (\ref{eqn:open_book_orbit_rotation_b}) we have
\[\widetilde{\Phi}_\tau(T_k,y_0) = \widetilde{\Xi}^k \cdot \widetilde{\Phi}(1,\phi^{k-1}(p)) \cdot \widetilde{\Phi}(1,\phi^{k-2}(p)) \cdots \widetilde{\Phi}(1,p) = \widetilde{\Xi}^k \cdot \widetilde{\Phi}(k,p)\]
Since $\rho(\widetilde{\Xi} \cdot \widetilde{\Psi}) = 1 + \rho(\widetilde{\Psi})$ for any $\widetilde{\Psi} \in \twSp$, we can conclude that
\[\rho\circ \widetilde{\Phi}_\tau(T_k,\iota(p)) = \rho\circ \widetilde{\Phi}(k,p) + k \qedhere\]
\end{proof}

\begin{proof}[Proof of Proposition \ref{prop:open_book}]
We prove each of the properties (a)-(e) separately.

\vspace{3pt}

{\bf Surface Of Section.} Define the inclusion $\iota:\D \to S^3$ as the following composition.
\[\iota:\D = 0 \times \D \to \R \times \D \xrightarrow{\pi} Y \simeq S^3\]
The surface $0 \times \D$ is transverse to the Reeb vector field $\partial_t$ of $\R \times \D$ and intersects every flowline $\R \times z$. Also, $(\R \times z) \cap \Omega$ has action $\sigma_\phi(z)$ and ends on $(\sigma_\phi(z),z) \sim (0,\phi_1(z))$. Thus $\iota(\mathbb{D}) = \pi(0 \times \D)$ is a surface of section with return time $\sigma_\phi$ and monodromy $\phi_1$. Finally, note that
\[\iota^*(d\alpha) = d(dt + \lambda)|_{0 \times \D} = \omega\]
This verifies all of the properties of $\iota:\D \to Y\simeq S^3$ listed in (a). 

\vspace{3pt}

{\bf Calabi Invariant.} This property follows from a simple calculation of the volume using the fundamental domain $\Omega$.
\[\vol{Y,\alpha} = \int_Y \alpha \wedge d\alpha = \int_\Omega dt \wedge d\lambda = \int_\D \sigma_\phi \cdot \omega = \Cal(\D,\phi)\]

\vspace{3pt}

{\bf Ruelle Invariant.} Let $\rot:S^3 \to \R$ be the local rotation number of $(S^3,\alpha)$. By Lemma \ref{lem:open_book_trivializations}, the restriction of $\rot$ to the (open) fundamental domain $\Omega \subset S^3$ coincides with $\rot_\tau$. Since $S^3 \setminus \Omega$ is measure $0$ in $S^3$, we thus have 
\begin{equation} \label{eqn:open_book_ruelle_a}
\Ru(S^3,\alpha) = \int_{S^3} \rot \cdot \alpha \wedge d\alpha = \int_\Omega \rot_\tau \cdot dt \wedge \omega = \int_\D \iota^*\rot_\tau \cdot \sigma_\phi \omega\end{equation}
Here $\iota^*\rot_\tau$ denotes the pullback of $\rot_\tau$ via the map $\iota:\D \to S^3$ from (a). We have used the fact that $\rot_\tau$ is constant along Reeb trajectories. This follows directly from the definition of $\rot_\tau$ as a time average.

\vspace{3pt}

To apply this alternative formula for $\Ru(S^3,\alpha)$, let $T_k$ denote the $k$th positive time that the Reeb trajectory $\gamma:[0,\infty) \to S^3$ intersects the surface of section $\iota(\D)$. Then 
\[
\iota^*\rot_\tau = \lim_{k \to \infty} \frac{\rho \circ \widetilde{\Phi}_\tau(T_k,-)}{T_k} = \lim_{k \to \infty} \frac{\rho \circ \widetilde{\Phi}(k,-) + k}{\sum_{i=0}^{k-1} \sigma_\phi \circ \phi^i} = \frac{r_\phi + 1}{s_\phi}
\]
Here the second equality is a consequence of Lemma \ref{lem:open_book_trivializations_rotation_number}. The maps $r_\phi$ and $s_\phi$ are the averaged rotation and action maps constructed in Lemma \ref{lem:rotation_map_on_disk_exists}. By construction, these maps are invariant under pullback by $\phi$. Thus 
\[
\int_\D \frac{r_\phi + 1}{s_\phi} \cdot \sigma_\phi\omega = \frac{1}{n}\sum_{k=0}^{n-1} \int_\D [\phi^k]^*(\frac{r_\phi + 1}{s_\phi} \cdot \sigma_\phi \omega) = \int_\D \frac{r_\phi + 1}{s_\phi} \cdot s_n \omega \quad\text{where}\quad s_n = \frac{1}{n} \sum_{k=0}^{n-1} \sigma_\phi \circ \phi^k
\]
By Lemma \ref{lem:rotation_map_on_disk_exists}, we know that $s_n \to s_\phi$ in $L^1(\D)$. Thus, by combining the above formula in the $n \to \infty$ limit with (\ref{eqn:open_book_ruelle_a}), we acquire the desired property.
\[
\Ru(S^3,\alpha) = \int_\D \frac{r_\phi + 1}{s_\phi} \cdot \sigma_\phi \cdot \omega = \int_\D \frac{r_\phi + 1}{s_\phi} \cdot s_\phi \cdot \omega = \int_\D (r_\phi + 1)\cdot \omega = \Ru(\D,\phi) + \pi
\]
\vspace{3pt}

{\bf Binding.} Let $b = \iota(\partial \D)$ be the binding which coincides with $\R/\pi\Z \times 0$ in $V$. First note that the Reeb vector field is given on $(V,\beta)$ by the following formula.
\begin{equation} \label{eqn:Reeb_vectorfield_on_V} 
R_\beta = \partial_t + \frac{2}{B} \partial_\theta\end{equation}
Thus $b$ is a Reeb orbit. Since $b$ bounds a symplectic disk $\iota(\D) \subset S^3$ of area $\pi$, the action is $\pi$. To compute $\rho(b)$, note that there is a natural trivialization of $\xi|_V = \ker(\beta)$ given by
\[
\nu:\xi|_V \subset TV \xrightarrow{\pi} T\D(\epsilon) = \R^2
\]
The Reeb flow $\phi:\R \times V \to V$ and the linearized Reeb flow $\Phi_\nu:\R \times V \to \Sp$ with respect to $\nu$ can be calculated from (\ref{eqn:Reeb_vectorfield_on_V}), as follows. 
\[\phi_t(s,z) = (s + t, e^{2it/B} \cdot z) \qquad \Phi_\nu(t,s,z) = e^{2it/B}\]
Thus the rotation number $\rho(b,\nu)$ of $b$ in the trivialization $\nu$ is $1/B$. Finally, to compute the rotation number $\rho(b) = \rho(b,\tau)$ with respect to the global trivialization $\tau$ on $\xi$, we note that
\[\rho(b,\tau) - \rho(b,\nu) = \mu(\tau \circ \nu^{-1}|_b) = c_1(\xi|_{\iota(\D)},\tau) - c_1(\xi|_{\iota(\D)},\nu) = - c_1(\xi|_{\iota(\D)},\nu)\]
Here $\mu:\pi_1(\Sp) \to \Z$ is the Maslov index and $c_1(\xi|_{\iota(\D)},-)$ is the relative Chern class of $\xi|_{\iota(\D)}$ with respect to a given trivialization over $\iota(\partial \D)$, which vanishes for $\tau$. 

\vspace{3pt}

On the other hand, the trivialization $\nu$ is specified by the section of $\xi|_{\iota(\D)}$ given by pushing $\iota(\partial \D)$ into $\iota(\D)$ along a collar neighborhood. Thus, $-c_1(\xi|_{\iota(\D)},\nu)$ is precisely the self-linking number $\on{sl}(b)$ of $b$. This number can be calculated as a signed count of singularities of the line field $\xi \cap \iota(\D)$, which has $1$ elliptic singularity. Thus $\on{sl}(b) = -1$ and $\rho(b) = 1 + 1/B$.

\vspace{3pt}

{\bf Orbits.} An embedded closed orbit $\gamma:\R/L\Z\rightarrow Y$ of $\alpha$ that is disjoint from the binding $b$ is equivalent to a closed orbit of $(U,\alpha|_U)$. The orbit $\gamma$ intersects the surface of section $\iota(\D)$ transversely at $n \ge 1$ times $T_0 = 0,T_1,\dots,T_n = L$. Let
\[p_k \in \D \qquad\text{be such that}\qquad \iota(p_k) = \gamma(T_k) \cap \iota(\D)\]
Since $\iota(\D)$ is a surface of section, we have $p_{i+1} = \phi(p_i)$ and since $\gamma$ is closed, $p_n = p_0$. Thus $p = p_0$ is a periodic point of period
\[\mathcal{L}(p) = n = \iota_*[\D] \cdot [\gamma] = \on{lk}(\gamma,b)\]
Next, note that on the interval $[T_i,T_{i+1}]$, $\gamma$ restricts to a map $[T_i,T_{i+1}] \to \Omega$ given by $\gamma(t) = (t,\iota(p_i))$, from which it follows that
\[
\mathcal{A}(\gamma) = \sum_{k=0}^{n-1} \int_{T_k}^{T_{k+1}} \gamma^*(dt + \alpha) = \sum_{k=0}^{n-1} \int_0^{\sigma(p_k)} dt = \sum_{k=0}^{n-1} \sigma \circ \phi^k(p) = \mathcal{A}(p)
\]
Finally, due to Lemma \ref{lem:open_book_trivializations} we may use the trivialization $\tau$ to compute the rotation number. We have
\[\rho(\gamma) = \rho \circ \widetilde{\Phi}_\tau(L,\gamma(0)) = \rho \circ \widetilde{\Phi}(n,p) + n = \rho(p) + \mathcal{L}(p)\]
Here the second equality uses Lemma \ref{lem:open_book_trivializations_rotation_number}. This completes the proof of (e), and the entire proposition.
\end{proof}

\subsection{Radial Hamiltonians} \label{subsec:radial_Hamiltonians} A Hamiltonian $H:\R/\Z \times \D \to \R$ that is rotationally invariant will be called \emph{radial}. In other words, $H$ is radial if it can be written as
\[H(t,r,\theta) = h(t,r) \qquad\text{for a map}\qquad h:\R/\Z \times [0,1] \to \R\]
We will require a few lemmas regarding radial Hamiltonians.
\begin{lemma} \label{lem:radial_action_function_1} Let $H:\D \to \R$ be an autonomous, radial Hamiltonian with $H = h \circ r$. Then
\begin{equation}
\sigma_\phi(r,\theta) = h(r) - \frac{1}{2}rh'(r) \qquad\text{and}\qquad r_\phi(r,\theta) = -\frac{h'(r)}{2\pi r}
\end{equation}
\end{lemma}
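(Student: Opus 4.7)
The plan is to use the rotational symmetry of $H$ to get an explicit description of $\phi_t$ and read off both formulas directly. First I would compute the Hamiltonian vector field from $\iota_{X_H}\omega=dH$ with $\omega=r\,dr\wedge d\theta$ and $dH=h'(r)\,dr$, which yields $X_H=-\tfrac{h'(r)}{r}\partial_\theta$. Thus $\phi_t$ is rigid rotation of each circle $\{r=\text{const}\}$ at angular speed $\omega(r):=-h'(r)/r$, i.e.\ $\phi_t(r,\theta)=(r,\theta+t\omega(r))$.

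The action formula is then essentially immediate: since $\lambda=\tfrac{1}{2}r^2\,d\theta$, we have $\iota_{X_H}\lambda=-\tfrac{1}{2}rh'(r)$, and both this quantity and $H=h(r)$ are radial, hence $\phi_t$-invariant. Integrating in the definition of $\sigma_\phi$ immediately yields $\sigma_\phi(r,\theta)=h(r)-\tfrac{1}{2}rh'(r)$.

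For the rotation, by rotational symmetry it suffices to work at $z=(r,0)$, identified with $r\in\C$. Writing the flow in complex coordinates as $\phi_t(w)=e^{it\omega(|w|)}w$ and differentiating at $w=r$ in the direction $1\in\C$ gives
\[
d\phi_t(r)\cdot 1 \;=\; e^{it\omega(r)}\bigl(1+itr\omega'(r)\bigr).
\]
Taking $s=1\in S^1$ as reference vector, the continuous lift of the argument of this image vector is
\[
\rho_s\circ\widetilde{\Phi}(T,z) \;=\; \frac{T\omega(r)+\arctan\bigl(Tr\omega'(r)\bigr)}{2\pi}.
\]
Since $|\arctan|\le\pi/2$ and Lemma~\ref{lem:rhos_all_equivalent} together with Lemma~\ref{lem:rotation_lift_formula} give $|\rho-\rho_s|\le 1$, we obtain $\rho\circ\widetilde{\Phi}(T,z)=T\omega(r)/(2\pi)+O(1)$. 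Dividing by $T=n$ and sending $n\to\infty$ yields $r_\phi(r,\theta)=\omega(r)/(2\pi)=-h'(r)/(2\pi r)$, and the existence of the $L^1$ limit is already guaranteed by Lemma~\ref{lem:rotation_map_on_disk_exists}.

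The main step requiring care is the rotation number: $\rho$ is defined abstractly as the unique homogeneous quasimorphism on $\twSp$ with a prescribed restriction to $\twU$, so rather than working with $\rho$ directly I would use the explicit $\rho_s$ of Definition~\ref{def:rotation_number_rel_s}, which is literally the lifted angle of a fixed reference vector under the linearized flow. The passage to $\rho$ is then a one-line invocation of Lemma~\ref{lem:rhos_all_equivalent}, and the bounded error washes out after dividing by $n\to\infty$.
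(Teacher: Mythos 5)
Your proof is correct and follows essentially the same strategy as the paper's: compute $X_H=-\tfrac{h'}{r}\partial_\theta$ from $\iota_{X_H}\omega=dH$, observe that both $\iota_{X_H}\lambda$ and $H$ are radial and hence flow-invariant to get $\sigma_\phi$, and for $r_\phi$ compute the linearized flow, extract $\rho_s$ for an explicit reference vector $s$, and pass to $\rho$ via Lemma~\ref{lem:rhos_all_equivalent} (together with Lemma~\ref{lem:rotation_lift_formula}). The one cosmetic difference is the choice of $s$: the paper takes $s=iz/|z|$, the tangential direction, for which $dr(s)=0$ kills the correction term in $d\phi_t$ entirely and makes $\rho_s\circ\widetilde{\Phi}(T,z)=T\cdot(-h'/2\pi r)$ hold exactly; you take $s=1$, the radial direction, which leaves a residual $\arctan(Tr\omega'(r))$ that is bounded and so disappears after dividing by $T$ and letting $T\to\infty$. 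Both choices are valid; the paper's is marginally cleaner, yours requires the (correct) extra observation that $\arctan$ is bounded.
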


\begin{proof} We calculate the Hamiltonian vector field $X_H$ and the action function $\sigma_\phi$ as follows.
\[X_H = -\frac{h'}{r} \cdot \partial_\theta \qquad\text{and}\qquad\text{and}\qquad \sigma_\phi(r,\theta) = \int_0^1 \phi^*_t(-\frac{rh'(r)}{2} + h(r)) \cdot dt = h(r) - \frac{1}{2}rh'(r)\]
Here we use the fact that the Hamiltonian flow $\phi$ preserves any function of $r$. Next, we note that the differential $\Phi:\R \times \D \to \D$ of the flow $\phi$ is given by
\[\Phi(t,z)v = \exp(\frac{-h'}{r} \cdot it)v + \frac{it(rh'' - h')}{r^2} \cdot \exp(\frac{-h'}{r} \cdot it)z \cdot dr(v)\]
Note that if we use $s = iz/|z|$, then $dr(v) = 0$. Thus, if $\widetilde{\Phi}:\R \times \D \to \twSp$ denotes the lift of $\Phi$, and $\rho_s$ denotes the rotation number relative to $s$ (see Definition \ref{def:rotation_number_rel_s}) then
\begin{equation}\label{eqn:radial_action_function_1a}
\Phi(t,z)s = \exp(\frac{-h'(r)}{r} \cdot it)s \qquad\text{and thus}\qquad \rho_s \circ \widetilde{\Phi}(T,z) = T \cdot \frac{-h'(r)}{2\pi r}\end{equation}
Since $\rho_s:\twSp \to \R$ and $\rho:\twSp \to \R$ are equivalent quasimorphisms (Lemma \ref{lem:rhos_all_equivalent}), we have

\[r_\phi = \lim_{T \to \infty} \frac{\rho \circ \widetilde{\Phi}(T,-)}{T} = \lim_{T \to \infty} \frac{\rho_s \circ \widetilde{\Phi}(T,-)}{T} = \frac{-h' \circ r}{2\pi r} \qquad\text{in }L^1(\D)\]
This concludes the proof of the lemma. \end{proof}

More generally, a Hamiltonian $H:\R/\Z \times \D \to \R$ is called \emph{radial around} $p \in \D$ if $H$ is invariant under rotation around $p$, i.e. if $H$ can be written as
\[H(t,x,y) = h(t,r_p) \qquad\text{for a map}\qquad h:\R/\Z \times [0,1] \to \R\]
Here $r_p:\D \to \R$ be the distance from $p$, i.e. $r_p(z) = |z - p|$. 

\begin{lemma} \label{lem:radial_action_function_2} Let $H:\D \to \R$ be an autonomous Hamiltonian that is radial around $p = (a,b) \in \D$, with $H = h \circ r_p$, in a neighborhood $U$ of $p$. Then on $U$, we have
\begin{equation}\label{eqn:radal_action_function_2}\sigma_\phi = h(r_p) - \frac{1}{2}r_ph'(r_p) + u_p - \phi_1^*u_p \qquad \text{and}\qquad r_\phi = -\frac{h'(r_p)}{2\pi r_p}\end{equation}
Here the map $u_p:\D \to \R$ is given by $u_p(x,y) = (bx - ay)/2$.
\end{lemma}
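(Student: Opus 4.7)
The plan is to reduce to the rotationally symmetric case already settled by Lemma~\ref{lem:radial_action_function_1}, by translating coordinates so that $p$ becomes the origin, and then to bookkeep the discrepancy in Liouville primitives introduced by this translation.

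Concretely, let $\tilde\lambda := \tfrac{1}{2}((x-a)dy - (y-b)dx)$ denote the pullback of the standard Liouville form under the translation $z\mapsto z-p$. A direct expansion shows
\[
\tilde\lambda = \lambda + du_p \qquad\text{where}\qquad u_p(x,y) = (bx-ay)/2.
\]
In the translated coordinates $q = z-p$, the Hamiltonian $H$ becomes a function of $|q| = r_p$ alone on $U$, so Lemma~\ref{lem:radial_action_function_1} applies pointwise on $U$ (for $q$ close enough to $p$ the orbit $\{\phi_t(q):t\in[0,1]\}$ stays inside $U$ because the local flow is a rotation around $p$). The formula for $r_\phi$ on $U$ then follows immediately: the averaged rotation number is intrinsic and independent of the choice of Liouville primitive, so no correction term is needed.

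For the action, the same reduction yields the ``translated action''
\[
\tilde\sigma_\phi := \int_0^1 \phi_t^*(\iota_{X_H}\tilde\lambda + H)\,dt = h(r_p) - \tfrac{1}{2} r_p h'(r_p)
\]
on $U$. To convert $\tilde\sigma_\phi$ into the original action $\sigma_\phi$ (which uses $\lambda$), I would expand
\[
\tilde\sigma_\phi - \sigma_\phi = \int_0^1 \phi_t^*(\iota_{X_H} du_p)\,dt = \int_0^1 \tfrac{d}{dt}(u_p\circ\phi_t)\,dt = \phi_1^* u_p - u_p,
\]
using that $\iota_{X_H}(du_p) = X_H(u_p) = \tfrac{d}{dt}(u_p\circ\phi_t)|_{t=0}$ together with the flow identity $\tfrac{d}{dt}(u_p\circ\phi_t) = (X_H u_p)\circ\phi_t$. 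Rearranging gives $\sigma_\phi = \tilde\sigma_\phi + u_p - \phi_1^* u_p$, which is exactly the claimed identity.

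The main obstacle is really only bookkeeping: one has to keep the signs consistent in the identification $\tilde\lambda - \lambda = du_p$ (otherwise the correction $u_p - \phi_1^* u_p$ appears with the wrong sign), and one must confirm that the local hypothesis of radial symmetry on $U$ is actually being applied correctly, namely that the orbits of $\phi_t$ through the points where the formula is being verified do not leave $U$. Both are routine since the local flow is a pure rotation around $p$.
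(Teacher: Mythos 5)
Your proof is correct, and it uses the same essential idea as the paper: introduce the translated Liouville primitive $\lambda_p = \tilde\lambda = \lambda + du_p$ centered at $p$, apply Lemma~\ref{lem:radial_action_function_1} after translation to compute the action with respect to $\lambda_p$, and then account for the change of primitive. You also correctly observe that $r_\phi$ depends only on the linearized flow, not on the Liouville primitive, so it is inherited unchanged.

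Where you diverge from the paper is in the final bookkeeping. The paper compares \emph{differentials}, using $d\sigma_\phi = \phi_1^*\lambda - \lambda$ together with $d(h(r_p) - \tfrac12 r_p h'(r_p)) = \phi_1^*\lambda_p - \lambda_p$ to establish $d\tau = d\sigma_\phi$, and then pins down the additive constant by evaluating both sides at the fixed point $p$. You instead compute the difference of actions directly as
\[
\tilde\sigma_\phi - \sigma_\phi = \int_0^1 \phi_t^*(\iota_{X_H}\,du_p)\,dt = \int_0^1 \frac{d}{dt}\bigl(u_p\circ\phi_t\bigr)\,dt = \phi_1^*u_p - u_p,
\]
which eliminates the need for a separate base-point check. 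This is marginally cleaner; in fact it neatly sidesteps a small sign/typo in the paper's displayed line, where the middle term is written $(\phi_1^*du_p - u_p)$ but should read $(du_p - \phi_1^*du_p)$ for the computation $d\tau = \phi_1^*\lambda - \lambda$ to go through. Your version makes the correct sign of the correction $u_p - \phi_1^*u_p$ manifest. Both proofs share the same implicit restriction (and resolve it the same way) that the formula is really being verified on those points of $U$ whose $\phi$-orbit remains in $U$, which suffices for the downstream application in Lemma~\ref{lem:action_of_phi}.
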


\begin{proof} Let $\lambda_p$ be the radial Liouville form on $(\D,\omega)$ centered at $p$. That is, $\lambda_p$ is given by
\[\lambda_p = \frac{1}{2}((x - a)dy - (y - b)dx) = \lambda + du_p\]
Let $\tau:\D \to \R$ be the function decribed in (\ref{eqn:radal_action_function_2}). Then by Lemma \ref{lem:radial_action_function_1}, we know that on $U$ we have
\[
d\tau = (\phi_1^*\lambda_p - \lambda_p) + (\phi_1^*du_p - u_p) = \phi^*_1\lambda - \lambda = d\sigma_\phi\]
Thus it suffices to check that $\sigma_\phi(p) = \tau(p)$. Since $rh'(p) = 0$ and $u_p(p) = u_p(\phi_1(p)) = 0$, we see that $\tau(p) = h(0) = H(p)$. On the other hand, $X_H(p) = 0$, we see that
\[\sigma_\phi(p) = \int_0^1 \phi^*_t(\lambda(X_H) + H)dt = \int_0^1 h(0)dt = \tau(p)\]
Thus $\sigma_\phi(p) = \tau(p)$. The formula for $r_\phi$ follows from identical arguments to Lemma \ref{lem:radial_action_function_1}.\end{proof}

\subsection{Special Hamiltonian Maps} \label{subsec:special_hamiltonian_map} We next construct a special Hamiltonian flow $\phi$ on the disk, depending on a set of parameters, and establish its basic properties with respect to action, rotation and periodic orbits. The desired contact forms in Proposition \ref{prop:dyn_cvx_counterexample} with small and large Ruelle invariant correspond (via Proposition \ref{prop:open_book}) to $\phi$ for suitable choices of parameters (see \S \ref{subsec:main_construction}).

\vspace{3pt}

The special Hamiltonian flow $\phi$ is constructed as the product of a pair of simpler flows.
\[
\phi = \phi^H \bullet \phi^G
\]
Here $\phi^G:[0,1] \times \D \to \D$ and $\phi^H:[0,1] \times \D \to \D$ are autonomous flows generated by $G$ and $H$, and the product $\bullet$ occurs in the universal cover of the group $\on{Ham}(\D,\omega)$ of Hamiltonian diffeomorphisms of $(\D,\omega)$. We denote the Hamiltonian generating $\phi$ by
\[H \# G:\R/\Z \times \D  \to \R\]

\begin{setup} \label{set:special_hamiltonian} We will require the following setup in the construction of $\phi$. The setup and notation established here will be used for the remainder of of \S \ref{subsec:special_hamiltonian_map}.
\begin{itemize}
\item[(a)] Fix an integer $n \ge 10$ and let $\mathbb{S}(n,k) \subset \D$ for $0 \le k \le n-1$ be the sector of points 
\[\mathbb{S}(n,k) := \{re^{i\theta} \in \D \; : \; 2\pi k/n < \theta < 2\pi(k+1)/n\}\]
\item[(b)] Fix a finite union $U \subset \D$ of disjoint disks in $\D$ such that each of the component disks $D\subset U$ is contained in one of the sectors $\mathbb{S}(n,k)$ and such that for every $D\subset U$ the disk $e^{2\pi i/n} \cdot D$ is a component disk of $U$ as well. We let
\[d(U) := \max{\on{diam}(D) \; : \; D \subset U \text{ is a component disk}}\]
That is, $d(U)$ is the maximal diameter of a disk in $U$.
\item[(c)] Fix a constant $\delta > 0$ that is much smaller than the radius of each disk $D$, than the distance between any two of the disks $D$ and $D'$, and than the distance between $D$ and the boundary of any of the sectors $\mathbb{S}(n,k)$. For any subset $S \subset \D$, we use the notation
\[N(S) := \{z \in \D | |z - p| \le \delta \text{ for some }p \in S\}\]
The neighborhoods $N(\partial D)$, $N(D)$, $N(U)$ and $N(\partial U)$ will be of particular importance.
\item[(d)] Fix a real number $R\in\R$. For every $s>\delta$ sufficiently large compared to $\delta$, there exists a smooth, monotonic function $g_s:[0,s+\delta]\to\R$ with support contained in $[0,s+\delta)$ satisfying the following conditions.
\begin{equation}
\label{eqn:g_s_properties_a}
g_s(r) = \pi\cdot R\cdot (s^2 - r^2) \qquad \text{if} \enspace r\leq s-\delta
\end{equation}
\begin{equation}
\label{eqn:g_s_properties_b}
|g_s'(r)| \leq 2\pi\cdot |R| \cdot (s-\delta)\qquad \text{if} \enspace s-\delta \leq r\leq s+\delta 
\end{equation}
Choose such a function $g_s$ for every $s$ that arises as the radius of a component disk $D\subset U$.
\end{itemize}
\end{setup}

We now introduce the two Hamiltonians $H$ and $G$ in some detail. The construction of $H$ only depends on the integer $n$. The construction of $G$ depends on $U$, $\delta$, $R$ and the choice of $g_s$.

\begin{construction} We let $H:\D \to \R$ denote the radial Hamiltonian given by the formula
\begin{equation} \label{eqn:def_of_H}
H(r,\theta) := \frac{\pi(n+1)}{n} \cdot (1 - r^2)
\end{equation}
The Hamiltonian vector field is $X_H = \frac{2\pi(n+1)}{n} \cdot \partial_\theta$ and so the Hamiltonian flow is given by
\begin{equation}\phi^H:\R \times \D \to \D \qquad\text{with}\qquad \phi^H(t,z) = \exp(\frac{2\pi(n+1)}{n} \cdot it) \cdot z\end{equation}
In particular, the time $1$ flow is rotation by $\frac{2\pi}{n}$ and preserves the collection $U$. \end{construction} 

\begin{construction} We let $G:\D \to \R$ denote a Hamiltonian that is invariant under rotation by angle $2\pi/n$ and that vanishes away from $N(U)$. That is
\begin{equation} \label{eqn:Hamiltonian_G_a}
G(z) = G(e^{2\pi i/n} \cdot z) \qquad\text{and}\qquad G|_{\D \setminus N(U)} = 0\end{equation}
Furthermore, let $D$ be a component disk of $U$ that is centered at $p \in \D$ and with radius $s$. Then we assume that $G$ is given by
\begin{equation}
G|_{N(D)} = g_s\circ r_p
\end{equation}
in the neighbourhood $N(D)$ of the disk $D$. This fully specifies $G$ on all of $\D$.
\end{construction}

A crucial fact that we will use later without comment is that $\phi^G$ and $\phi^H$ commute as elements of the universal cover of $\on{Ham}(\D,\omega)$, i.e. $\phi^G \bullet \phi^H = \phi^H \bullet \phi^G$. The remainder of this section is devoted to calculating properties of the action, rotation and periodic points of the map $\phi$.

\begin{lemma}[Action of $\phi$] \label{lem:action_of_phi} The action map $\sigma_\phi:\D \to \R$ and Calabi invariant $\Cal(\D,\phi)$ satisfy
\begin{equation} \label{eqn:special_hamiltonian_action_a}
\sigma_\phi = \pi(1 + \frac{1}{n}) + R\sum_{D \subset U} \area(D) \cdot \chi_D + O(d(U)) \qquad \text{on} \quad \D \setminus N(\partial U)\end{equation}
\begin{equation} \label{eqn:special_hamiltonian_action_b}
\pi/2 + \min{0,R}\cdot 2\pi/n \le \sigma_\phi \le 2\pi + \max{0,R}\cdot 2\pi/n \qquad \text{on}\quad \D
\end{equation}
\begin{equation} \label{eqn:special_hamiltonian_action_c}
\Cal(\D,\phi) = \pi^2(1 + \frac{1}{n}) +R\sum_{D \subset U } \area(D)^2 + O(d(U)) + O(|R|\cdot \area(N(\partial U)))\end{equation}

\end{lemma}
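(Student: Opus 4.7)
The plan is to reduce to the radial Lemmas \ref{lem:radial_action_function_1} and \ref{lem:radial_action_function_2} via the concatenation formula for action functions. A direct calculation from the definition of $\sigma$ applied to the concatenated Hamiltonian path generating $\phi^H\bullet\phi^G$ yields the cocycle identity
\begin{equation*}
\sigma_{\phi^H\bullet\phi^G} \;=\; \sigma_{\phi^G} + (\phi^G_1)^*\sigma_{\phi^H}.
\end{equation*}
Applying Lemma \ref{lem:radial_action_function_1} with $h(r)=\tfrac{\pi(n+1)}{n}(1-r^2)$, the $r^2$ terms cancel and $\sigma_{\phi^H}\equiv \pi(1+1/n)$ is constant, so the pullback is trivial and $\sigma_\phi = \pi(1+1/n)+\sigma_{\phi^G}$. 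Since $G$ is supported in $N(U)$, so is $\sigma_{\phi^G}$; on each $N(D)$ with $D$ centred at $p$ of radius $s$, Lemma \ref{lem:radial_action_function_2} gives
\begin{equation*}
\sigma_{\phi^G}|_{N(D)} \;=\; g_s(r_p) - \tfrac{1}{2}r_p g_s'(r_p) + u_p - (\phi^G_1)^*u_p.
\end{equation*}

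\textbf{Pointwise estimates.} For (\ref{eqn:special_hamiltonian_action_a}) on $\D\setminus N(\partial U)$ I would split into the case $z\notin N(U)$, where $\sigma_{\phi^G}(z)=0$, and $z\in D_0:=\{r_p\leq s-\delta\}\subset D$, where (\ref{eqn:g_s_properties_a}) gives $g_s - \tfrac{1}{2}r_p g_s' = \pi R s^2 = R\cdot\area(D)$. Because $G$ is radial around $p$ on $N(D)$ and vanishes at the outer boundary, the flow $\phi^G$ preserves every concentric circle around $p$ in $N(D)$, so $\phi^G_1$ maps $N(D)$ to itself and $|z-\phi^G_1(z)|\leq 2(s+\delta) = O(d(U))$; together with the fact that $u_p$ is linear with $|\nabla u_p|=|p|/2\leq 1/2$, this yields $|u_p - (\phi^G_1)^*u_p| = O(d(U))$ and establishes (\ref{eqn:special_hamiltonian_action_a}). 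For the global bound (\ref{eqn:special_hamiltonian_action_b}), I would combine this $O(d(U))$ estimate on the translation piece with (\ref{eqn:g_s_properties_b}) and the sector constraint $\area(D)\leq\pi/n$ (since $D\subset \mathbb{S}(n,k)$), which together give $|g_s(r_p) - \tfrac{1}{2}r_p g_s'(r_p)|\leq O(|R|/n)$; for $n\geq 10$ and $d(U)$ small, these numerical constants fit inside the advertised window around $\sigma_{\phi^H}=\pi(1+1/n)$.

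\textbf{Calabi invariant.} I would integrate by splitting
\begin{equation*}
\int_\D \sigma_{\phi^G}\,\omega \;=\; \sum_D\Bigl(\int_{D_0} + \int_{N(\partial D)}\Bigr)\sigma_{\phi^G}\,\omega.
\end{equation*}
The crucial cancellation is
\begin{equation*}
\int_{D_0}\bigl(u_p - (\phi^G_1)^*u_p\bigr)\,\omega \;=\; 0,
\end{equation*}
which holds because on $D_0$ the map $\phi^G_1$ is a rotation around $p$ (so it preserves $D_0$) and preserves $\omega$. Hence $\int_{D_0}\sigma_{\phi^G}\,\omega = R\area(D)\area(D_0) = R\area(D)^2 - R\area(D)\area(N(\partial D))$, while the leftover boundary integral is controlled by the $L^\infty$ bound from the previous step times $\area(N(\partial D))$. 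Summing over $D$ and using $\sum_D\area(N(\partial D))\leq \area(N(\partial U))$ produces the Calabi formula (\ref{eqn:special_hamiltonian_action_c}).

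\textbf{Main obstacle.} The difficulty is the careful bookkeeping in the boundary layer $N(\partial U)$, where the explicit form (\ref{eqn:g_s_properties_a}) of $g_s$ degrades to the merely Lipschitz bound (\ref{eqn:g_s_properties_b}) and several small parameters ($\delta$, $d(U)$, $|R|/n$, $\area(N(\partial U))$) must be cleanly absorbed into the two advertised error terms. The essential trick that makes the leading-order Calabi term come out to the clean sum $R\sum_D \area(D)^2$, rather than a quantity polluted by the twist angle $2\pi R$, is the exact vanishing of the $u_p$-translation integral over the rotation-invariant region $D_0$.
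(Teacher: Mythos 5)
Your proposal is structurally the same as the paper's: split $\sigma_\phi = \sigma_G + \sigma_H$ using the cocycle identity and the commutativity of $\phi^G,\phi^H$, compute $\sigma_H \equiv \pi(1+1/n)$ by Lemma \ref{lem:radial_action_function_1}, compute $\sigma_G$ on each $N(D)$ by Lemma \ref{lem:radial_action_function_2}, and decompose $\D$ into $\D\setminus N(U)$, the cores $D_0$, and the boundary layers $N(\partial D)$. Part (a) is handled correctly.

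However, your argument for (\ref{eqn:special_hamiltonian_action_b}) has a genuine gap. You derive only the two-sided bound $|g_s(r_p) - \tfrac12 r_p g_s'(r_p)| \le O(|R|/n)$ and then assert that for $n\ge 10$ and small $d(U)$ this ``fits inside the advertised window.'' But the window is asymmetric in $R$: when $R>0$, the lower bound in (\ref{eqn:special_hamiltonian_action_b}) is just $\pi/2$ (the term $\min\{0,R\}\cdot 2\pi/n$ vanishes), while your estimate gives $\sigma_\phi \ge \pi(1+1/n) - 2\pi R/n - O(d(U))$, which drops below $\pi/2$ as soon as $R$ is comparable to $n$. Symmetrically, when $R<0$ the upper bound $2\pi$ is not reachable from $|\sigma_G|\le O(|R|/n)$ alone. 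The missing ingredient is the \emph{one-sided sign} of the radial piece: since $g_s$ is monotone from $\pi R s^2$ to $0$, when $R\ge 0$ one has $g_s\ge 0$ and $g_s'\le 0$, hence $g_s - \tfrac12 r_p g_s'\ge 0$; and the reverse inequality when $R\le 0$. Coupled with $|u_p - (\phi^G_1)^*u_p|=O(d(U))$, this is what makes the $R$-independent side of (\ref{eqn:special_hamiltonian_action_b}) work for all $R$. (The paper records this as ``$\sigma_G\ge 0$ if $R\ge 0$ and $\sigma_G\le 0$ if $R\le 0$.'') This is not a cosmetic point: the application in \S\ref{subsec:main_construction} takes $R=\kappa\to\infty$ with $n>\kappa$, so $R/n$ can be arbitrarily close to $1$.

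For (\ref{eqn:special_hamiltonian_action_c}) your observation that $\int_{D_0}(u_p - (\phi^G_1)^*u_p)\,\omega = 0$ (because $\phi^G_1$ restricts to an $\omega$-preserving rotation of $D_0$) is correct, but your claim that this is the ``essential trick'' is not. The paper simply integrates the pointwise bound $|u_p - (\phi^G_1)^*u_p|\le d(U)$, which contributes at most $\pi\cdot d(U) = O(d(U))$ to the Calabi invariant — already absorbed into the stated error term — so there is no ``pollution by the twist angle'' to begin with. Your cancellation is a nice refinement but is not needed to obtain (\ref{eqn:special_hamiltonian_action_c}) with the error terms as advertised.
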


\begin{proof} Since $\phi^G$ and $\phi^H$ commute, we have $\sigma_G \circ \phi^H_1 = \sigma_G$ and therefore
\[\sigma_\phi = \sigma_G \circ \phi^H_1 + \sigma_H = \sigma_G + \sigma_H\]
Thus we must compute the action map of $G$ and $H$. First, we note that $H$ is radial by (\ref{eqn:def_of_H}). Thus we apply  Lemma \ref{lem:radial_action_function_1} to see
\begin{equation} \label{eqn:special_hamiltonian_action_proof_a} \sigma_H = \pi(1 + \frac{1}{n}) \quad \text{on all of } \D\end{equation}
Next we compute the action map of $G$. Let $D$ be a component disk of $U$ centered at $p$ and of radius $s$. We can apply Lemma \ref{lem:radial_action_function_2} to see that 
\[\sigma_G = g_s(r_p)- \frac{1}{2} r_p g_s'(r_p) + (u_p - (\phi^G_1)^*u_p) = R \area(D) + O(d(U)) \quad \text{on } D \setminus N(\partial D)\]
Here we used expression \eqref{eqn:g_s_properties_a} for $g_s$. It follows from the definition of $u_p$ in Lemma \ref{lem:radial_action_function_2} and the fact that $d(U)$ is an upper bound on the diameter of $D$ that $|u_p - (\phi^G_1)^*u_p|$ is bounded above by $d(U)$. Since $\sigma_G = 0$ outside of $N(D)$, we thus acquire the formula
\begin{equation} \label{eqn:special_hamiltonian_action_proof_b} 
\sigma_G = R\sum_{D \subset U} \area(D) \cdot \chi_D +O(d(U)) \quad \text{ on }\D \setminus N(\partial U) \end{equation}
Adding (\ref{eqn:special_hamiltonian_action_proof_a}) and (\ref{eqn:special_hamiltonian_action_proof_b}) yields the desired formula (\ref{eqn:special_hamiltonian_action_a}) and implies (\ref{eqn:special_hamiltonian_action_b}) away from $N(\partial U)$. We can estimate on the neighbourhood $N(\partial D)$
\[|\sigma_G| \le |g_s(r_p) - \frac{1}{2}r_p g_s'(r_p)| + |u_p - (\phi^G_1)^*u_p| \le 2|R|\pi s^2 + |u_p - (\phi^G_1)^*u_p|\]
We observe that $\pi s^2 < \pi/n$ and $|u_p - (\phi^G_1)^*u_p|< 1$. Moreover, $\sigma_G\ge 0$ if $R\ge 0$ and $\sigma_G\le 0$ if $R\le 0$. Adding $\sigma_G$ to the formula (\ref{eqn:special_hamiltonian_action_proof_a}) for $\sigma_H$ thus yields (\ref{eqn:special_hamiltonian_action_b}) on $N(\partial D)$. Finally, since $|\sigma_G|=O(|R|)$, the integral of $\sigma_G$ over $\D$ agrees with the integral over $\D\setminus N(\partial U)$ up to an $O(|R|\cdot\area(N(\partial U)))$ term. This proves (\ref{eqn:special_hamiltonian_action_c}).
\end{proof}

\begin{lemma}[Rotation of $\phi$] \label{lem:rotation_of_phi} The rotation map $r_\phi:\D \to \R$ and the Ruelle invariant $\Ru(\D,\phi)$ satisfy
\begin{equation} \label{eqn:special_hamiltonian_rotation_a}
r_\phi = (1 + \frac{1}{n}) + R\sum_{D \subset U} \chi_D \qquad \text{on} \quad \D \setminus N(\partial U)\end{equation}
\begin{equation} \label{eqn:special_hamiltonian_rotation_b}
1+\frac{1}{n} + \min{0,R} \leq r_\phi  \leq 1+\frac{1}{n} + \max{0,R} \qquad \text{on}\quad \D
\end{equation}
\begin{equation} \label{eqn:special_hamiltonian_rotation_c}
\Ru(\D,\phi) = \pi(1 + \frac{1}{n}) + R \sum_{D \subset U} \area(D) + O(R\cdot \area(N(\partial U)))\end{equation}
\end{lemma}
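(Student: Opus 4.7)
The plan is to reduce everything to a computation about $\phi^G$ alone. Since $\phi^H$ is a rigid rotation by the constant angle $2\pi(n+1)/n$, its differential at every point of $\D$ equals multiplication by $\zeta := e^{2\pi i(n+1)/n}$, so the lifted linearized flow satisfies $\widetilde{\Phi}^H(k,z) = \tilde A^k$ for every $z$ and every $k \in \N$, where $\tilde A \in \twU \subset \twSp$ is the lift with $\rho(\tilde A) = 1 + 1/n$. A chain-rule computation, exploiting the commutativity $\phi^H \bullet \phi^G = \phi^G \bullet \phi^H$ in the universal cover of $\on{Ham}(\D,\omega)$, the $\phi^H$-invariance of $G$, and the centrality of $\tilde A$ in $\twSp$, yields the identity $\widetilde{\Phi}(k,z) = \tilde A^k \cdot \widetilde{\Phi}^G(k,z)$. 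Since $\rho|_{\twU}$ is a homomorphism by Theorem \ref{thm:quasi_homo_twSp} and $\rho$ is a quasimorphism on $\twSp$, one obtains $\rho(\tilde A^k \cdot X) = k(1 + 1/n) + \rho(X) + O(1)$ for any $X \in \twSp$; dividing by $k$ and sending $k\to\infty$ yields the pointwise almost everywhere identity
\[
r_\phi = (1 + 1/n) + r_{\phi^G}.
\]

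Next I would compute $r_{\phi^G}$ locally. On the neighbourhood $N(D)$ of a component disk $D \subset U$ of radius $s$ centered at $p$, the Hamiltonian $G$ equals $g_s \circ r_p$, so Lemma \ref{lem:radial_action_function_2} yields $r_{\phi^G}(z) = -g_s'(r_p(z))/(2\pi\, r_p(z))$ on $N(D)$, while $r_{\phi^G} = 0$ on $\D \setminus N(U)$ since $\phi^G$ is the identity there. For $z \in D \setminus N(\partial D)$ we have $r_p(z) \leq s - \delta$, so \eqref{eqn:g_s_properties_a} gives $g_s'(r_p) = -2\pi R r_p$ and hence $r_{\phi^G}(z) = R$. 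Combining the two cases proves (\ref{eqn:special_hamiltonian_rotation_a}) on $\D \setminus N(\partial U)$.

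For the a priori bound (\ref{eqn:special_hamiltonian_rotation_b}), it remains to control $r_{\phi^G}$ on the transition annuli $N(\partial D) = \{r_p \in [s-\delta, s+\delta]\}$. The monotonicity of $g_s$ from Setup \ref{set:special_hamiltonian}(d) forces $r_{\phi^G}$ to share the sign of $R$, while the bound $|g_s'(r)| \leq 2\pi|R|(s-\delta)$ from \eqref{eqn:g_s_properties_b}, combined with $r_p \geq s - \delta$, gives $|r_{\phi^G}| \leq |R|$, so $r_{\phi^G} \in [\min{0,R}, \max{0,R}]$ everywhere on $\D$. For the Ruelle invariant, a direct computation in polar coordinates around each $p$ yields
\[
\int_{N(D)} r_{\phi^G}\, \omega = \int_0^{s+\delta} -g_s'(r)\, dr = g_s(0) - g_s(s + \delta) = \pi R s^2 = R \cdot \area(D),
\]
using the support condition $g_s(s + \delta) = 0$. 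Summing over $D \subset U$, integrating the splitting identity, and adding $\pi(1 + 1/n)$ then proves (\ref{eqn:special_hamiltonian_rotation_c}), in fact without the $O$-error term.

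The main obstacle is the splitting of the rotation number in the first step: it depends on $\phi^H$ being a \emph{uniform} rotation so that its lifted linearization is a central element of $\twSp$ commuting past $\widetilde{\Phi}^G$, and on $\rho$ restricting to an honest homomorphism on the central circle $\twU$. Absent these, one would need to compare orbits of $\phi$ with those of $\phi^G$ directly, which would both obscure the pointwise identity and introduce additional error terms in the integral formula.
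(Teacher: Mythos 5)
Your proof is correct and follows a genuinely different route from the paper's, which is worth recording. The paper expands $\widetilde{\Phi}(k,z)$ as an alternating product of the lifted differentials $\widetilde{\Phi}^H(1,\cdot)$ and $\widetilde{\Phi}^G(1,\cdot)$ (their \eqref{eqn:special_hamiltonian_lifted_differential}) and then, on the transition annuli $N(\partial D)$, runs a term-by-term $\rho_u$-additivity argument with $u$ tangent to circles about the center of $D$ --- effectively re-deriving the radial rotation-number formula inside the proof. You instead exploit the centrality of the lifted rotation $\tilde{A}$ in $\twSp$ to split $\widetilde{\Phi}(k,z) = \tilde{A}^k \cdot \widetilde{\Phi}^G(k,z)$, deduce the pointwise identity $r_\phi = (1 + \tfrac{1}{n}) + r_{\phi^G}$ from the quasimorphism property of $\rho$, and then invoke Lemma \ref{lem:radial_action_function_2} directly on each $N(D)$ to get the exact expression $r_{\phi^G} = -g_s'(r_p)/(2\pi r_p)$. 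This buys you two things: a shorter argument that reuses the radial-Hamiltonian lemma rather than repeating its content inside $N(\partial D)$, and the Ruelle invariant computed exactly, $\Ru(\D,\phi) = \pi(1 + \tfrac{1}{n}) + R\sum_{D\subset U}\area(D)$, with no $O(R\cdot\area(N(\partial U)))$ error term, a mild sharpening of the stated lemma. One minor caveat: the identity $\widetilde{\Phi}(k,z) = \tilde{A}^k \widetilde{\Phi}^G(k,z)$ is not merely a chain rule for the endpoint differentials --- since both sides live in $\twSp$ one must push the homotopy witnessing $\phi^H \bullet \phi^G = \phi^G \bullet \phi^H$ in the universal cover of $\on{Ham}(\D,\omega)$ forward, via $(s,t) \mapsto d\Psi(s,t)|_z \in \Sp$ (using the canonical trivialization of $T\D$), to a homotopy rel endpoints of the two paths of differentials at $z$. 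This is exactly the commutativity fact the paper invokes ``without comment,'' so you are on equal footing, but it deserves one sentence rather than the phrase ``chain-rule computation.''
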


\begin{proof} In the universal cover of $\on{Ham}(\D,\phi)$, the time $k$ flow $\phi^k$ of $G \# H$ can be factored in terms of the time $1$ flow $\phi^G:[0,1] \times \D \to \D$ of $G$ and the time $1$ flow $\phi^H:[0,1] \times \D \to \D$ of $H$, as follows.
\[\phi^k = (\phi^H \bullet \phi^G)^k = \phi^H \bullet \phi^G \bullet \phi^H \bullet \dots \bullet \phi^H \bullet \phi^G\]
This factorization is inherited by the lifted differential $\tilde{\Phi}:\R \times \D \to \twSp$ of $\phi:\R \times \D \to \D$ due to the cocycle property of $\widetilde{\Phi}$.
\begin{equation}\label{eqn:special_hamiltonian_lifted_differential}
\widetilde{\Phi}(k,z) = \widetilde{\Phi}^H(1,\phi^G \circ \phi^{k-1}(z)) \bullet \widetilde{\Phi}^G(1,\phi^{k-1}(z)) \bullet \widetilde{\Phi}^H(1,\phi^G \circ \phi^{k-2}(z)) \bullet \dots \bullet \widetilde{\Phi}^G(1,z)\end{equation}

To apply this, we note that the differential $\Phi^H:[0,1] \times \D \to \Sp$ of the flow of $H$ is given by
\begin{equation}\label{eqn:special_hamiltonian_lifted_differential_a}
\Phi^H(t,z) = \exp(2\pi(1+1/n) \cdot it) \quad\text{for any }z \in \D \end{equation}
Likewise, the differential $\Phi^G:[0,1] \times \D \to \Sp$ of the flow of $G$ is given by the formula
\begin{equation}\label{eqn:special_hamiltonian_lifted_differential_b}
\Phi^G(t,z) = \exp(2\pi \cdot R \cdot it)\text{ if }z \in U\setminus N(\partial U) \qquad\text{and}\qquad \Phi^G(t,z) = \on{Id} \text{ if }z \in \D \setminus N(D)\end{equation}
By combining (\ref{eqn:special_hamiltonian_lifted_differential_a}) and (\ref{eqn:special_hamiltonian_lifted_differential_b}) with the decomposition (\ref{eqn:special_hamiltonian_lifted_differential}), we acquire the following formula.
\begin{equation} \label{eqn:special_hamiltonian_lifted_differential_c}
\rho \circ \widetilde{\Phi}(k,z) = k \cdot \big(1 + \frac{1}{n} +R\sum_{D \subset U} \chi_D(z)  \big) \qquad \text{if} \quad z \in \D \setminus N(\partial U)
\end{equation}
By dividing (\ref{eqn:special_hamiltonian_lifted_differential_c}) by $k$ and taking the limit as $k \to \infty$, we acquire the first formula (\ref{eqn:special_hamiltonian_rotation_a}).

\vspace{3pt}

Next, we examine the rotation number in the region $N(\partial D)$. Fix a component disk $D \subset U$ centered at $p$ and a point $z \in N(\partial D)$. Let $S \subset N(\partial D)$ be a circle centered at $p$ with $z \in S$, and let $u \in T_zS$ be a unit tangent vector to $S$ at $z$. Finally, let
\[
S_i = \phi^i(S) \qquad z_i = \phi^i(z) \qquad w_i = \phi^G \circ \phi^i(z) \qquad u_i = \Phi(i,z)u \qquad  v_i = \Phi^G(1,\phi^i(z))\Phi(i,z)u
\]
Note that these points and vectors satisfy $z_i \in S_i$, $w_i \in S_i$, $u_i \in T_{z_i}S_i$ and $v_i \in T_{w_i}S_i$ for each $i$. By applying the decomposition (\ref{eqn:special_hamiltonian_lifted_differential}) and the additivity property (\ref{eqn:rho_s_additivity_property}) of $\rho_s$, we see that 
\begin{equation} \label{eqn:special_hamiltonian_rotation_nbhd_a}
\rho_u(\widetilde{\Phi}(k,z)) = \sum_{i=0}^{k-1} \rho_{u_i}(\widetilde{\Phi}^G(1,z_i))) + \sum_{i=0}^{k-1} \rho_{v_i}(\widetilde{\Phi}^H(1,w_i))
\end{equation}
Since $\phi^H$ is just an orthogonal rotation, we can use (\ref{eqn:special_hamiltonian_lifted_differential_a}) to immediately conclude that
\begin{equation} \label{eqn:special_hamiltonian_rotation_nbhd_b}
\rho_{v_i}(\widetilde{\Phi}^H(1,z_i))) = 1 + \frac{1}{n}\end{equation}
On the other hand, since $u_i$ is tangent to the circle $S_i$, we may use the formula (\ref{eqn:radial_action_function_1a}) to see that
\begin{equation}\label{eqn:special_hamiltonian_rotation_nbhd_c} \rho_{u_i}(\widetilde{\Phi}^G(1,z_i))) = -\frac{g_s'(r_p(z))}{2\pi r_p(z)} \end{equation}
Here $g_s$ is the function such that $G|_{N(D)} = g_s \circ r_p$.
By our hypotheses, we know that
\[
\frac{|g_s'(r_p(z))|}{2\pi r_p(z)} \le |R| \qquad \text{and} \qquad \operatorname{sgn} (-\frac{g_s'(r_p(z))}{2\pi r_p(z)}) = \operatorname{sgn}(R)
\]
By plugging in the formulas (\ref{eqn:special_hamiltonian_rotation_nbhd_a}) and (\ref{eqn:special_hamiltonian_rotation_nbhd_b}), we can estimate $\rho_u \circ \widetilde{\Phi}(k,z)$ as follows.
\[ k \cdot (1+\frac{1}{n} + \min{0,R}) \le \rho_u \circ \widetilde{\Phi}(k,z) \le k \cdot (1+\frac{1}{n} + \max{0,R}) \]
We can therefore estimate $r_\phi$. Since $\rho_u$ and $\rho$ are equivalent (Lemma \ref{lem:rhos_all_equivalent}) we find that
\[
r_\phi(z) = \lim_{k \to \infty} \frac{\rho_u \circ \widetilde{\Phi}(k,z)}{k} \qquad\text{and thus}\qquad 1+\frac{1}{n} + \min{0,R} \leq r_\phi(z) \leq 1+\frac{1}{n} + \max{0,R}
\]
Finally, the Ruelle invariant agrees with the integral of (\ref{eqn:special_hamiltonian_rotation_a}) over $\D \setminus N(\partial U)$ up to an $O(|R|\cdot \area(N(\partial U)))$ term. This proves (\ref{eqn:special_hamiltonian_rotation_c}).
\end{proof}

\begin{lemma}[Periodic Points of $\phi$] \label{lem:special_hamiltonian_periodic_points} Suppose that $R>-2$. Then the periodic points of $\phi:\D \to \D$ satisfy
\begin{equation}
\mathcal{A}(p) \ge \pi \qquad \text{and}\qquad \rho(p) + \mathcal{L}(p) > 1
\end{equation}
\end{lemma}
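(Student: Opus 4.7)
The approach is to classify the periodic points of $\phi$ using the common $2\pi/n$-rotational symmetry of $\phi^H$ and of the support of $\phi^G$, and then apply the pointwise bounds on $\sigma_\phi$ and $r_\phi$ from Lemmas \ref{lem:action_of_phi} and \ref{lem:rotation_of_phi}. The crucial dichotomy to establish first is that every periodic point is either the origin (with $\mathcal{L}(p) = 1$) or satisfies $\mathcal{L}(p) \ge n$.

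To obtain this, I would observe that $\phi^G$ is the identity outside $N(U)$ while $N(U)$, and hence its complement, is invariant under the $2\pi/n$-rotation $\phi^H$, so $\phi$ preserves both $\D \setminus N(U)$ and $N(U)$. On $\D \setminus N(U)$ the map $\phi$ is simply the rotation by $2\pi/n$, whose only fixed point is the origin and whose other orbits have period exactly $n$. Inside $N(U)$, let $D_j := e^{2\pi i j/n} D$ for a component disk $D \subset U$; by the smallness of $\delta$ the neighborhoods $N(D_0), \ldots, N(D_{n-1})$ are pairwise disjoint, $\phi^G$ preserves each $N(D_j)$ (being radial about the center of $D_j$ and supported in it), and $\phi^H$ sends $N(D_j)$ bijectively onto $N(D_{j+1 \bmod n})$. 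Consequently $\phi^k(p) \in N(D_{j+k \bmod n})$ for $p \in N(D_j)$, forcing $\mathcal{L}(p)$ to be a multiple of $n$.

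Granting this, the origin is handled directly: on a neighborhood of $0$, $\phi^G$ is trivial and $\phi = \phi^H$ is the rotation by $2\pi(1 + 1/n)$, which gives $\sigma_\phi(0) = \pi(1 + 1/n)$ and $\rho(0) = 1 + 1/n$, so $\mathcal{A}(0) > \pi$ and $\rho(0) + \mathcal{L}(0) = 2 + 1/n > 1$. For $\mathcal{L}(p) \ge n \ge 10$, I would sum the pointwise bound $\sigma_\phi \ge \pi/2 + \min\{0,R\} \cdot 2\pi/n$ from Lemma \ref{lem:action_of_phi} along the orbit to get $\mathcal{A}(p) \ge n\pi/2 + 2\pi \min\{0,R\} > n\pi/2 - 4\pi \ge \pi$, using $R > -2$ and $n \ge 10$. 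For the rotation, homogeneity of $\rho$ yields $\rho(p) = \mathcal{L}(p)\, r_\phi(p)$ for periodic $p$, and the pointwise bound $r_\phi(p) \ge 1 + 1/n + \min\{0,R\}$ from Lemma \ref{lem:rotation_of_phi} combined with $R > -2$ gives $r_\phi(p) + 1 > 1/n$, so $\rho(p) + \mathcal{L}(p) = \mathcal{L}(p)(r_\phi(p) + 1) > \mathcal{L}(p)/n \ge 1$.

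The main subtle point is that $r_\phi$ is defined only as an $L^1$-limit, so some care is required to apply its lower bound pointwise at the particular periodic point $p$. This is not actually an obstruction: for periodic $p$, homogeneity of $\rho$ makes $r_\phi(p) = \rho(p)/\mathcal{L}(p)$ unambiguously defined, and the estimate $\rho_u \circ \widetilde{\Phi}(k,z) \ge k(1 + 1/n + \min\{0,R\})$ established pointwise in the proof of Lemma \ref{lem:rotation_of_phi} passes to the limit as $k \to \infty$ via Lemma \ref{lem:rhos_all_equivalent}.
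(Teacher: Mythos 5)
Your proof is correct and follows essentially the same route as the paper: classify periodic points into the fixed point at the origin versus everything else with period at least $n$, then sum the pointwise bounds on $\sigma_\phi$ and $r_\phi$ from Lemmas \ref{lem:action_of_phi} and \ref{lem:rotation_of_phi} along the orbit. The only differences are cosmetic refinements on your side: the paper justifies $\mathcal{L}(p) \ge n$ in one line by observing that $\phi$ advances the sectors $\mathbb{S}(n,k) \mapsto \mathbb{S}(n,k+1)$, whereas you carry out the equivalent bookkeeping through the $\phi$-invariant decomposition $\D = (\D \setminus N(U)) \cup N(U)$ and the cycling of the $N(D_j)$; and you make explicit the observation that the pointwise lower bound on $\rho_u\circ\widetilde\Phi$ from the proof of Lemma \ref{lem:rotation_of_phi}, together with homogeneity of $\rho$ at a periodic point, legitimizes evaluating $r_\phi(p)$ at that specific $p$ even though $r_\phi$ is a priori only an $L^1$-limit. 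Both of these are sound and amount to writing out steps the paper leaves implicit.
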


\begin{proof} First, consider the center $c = 0 \in \D$, where $\phi = \phi^H$. This periodic point has period $\mathcal{L}(c) = 1$. Thus, due to Lemmas \ref{lem:action_of_phi} and \ref{lem:rotation_of_phi}, the action and rotation number are given by
\[\mathcal{A}(c) = \sigma_\phi(c) = \pi(1 + \frac{1}{n}) \qquad \rho(c) =  r_\phi(c) = 1 + \frac{1}{n}\]
Any other periodic point $p$ of $H$ has period $\mathcal{L}(p) \ge n$, since $\phi$ rotates the sector $\mathbb{S}(n,k)$ to the section $\mathbb{S}(n,k+1)$. Since $n \ge 10$ and $R>-2$, we have $\sigma_\phi \ge \pi/10$ (by Lemma \ref{lem:action_of_phi}). The action of $p$ is bounded below as follows.
\[\mathcal{A}(p) = \sum_{i=0}^{\mathcal{L}(p)-1} \sigma_\phi(\phi^i(p)) \ge \frac{\pi}{10} \cdot \mathcal{L}(p) \ge \pi\]
Likewise, we apply Lemma \ref{lem:rotation_of_phi} to see that the rotation number of $p$ is bounded as follows.
\[\rho(p) = \mathcal{L}(p) \cdot r_\phi(p) \geq \mathcal{L}(p) \cdot (1 + \frac{1}{n} +\min{0,R}) > \mathcal{L}(p)\cdot (-1+\frac{1}{n}) \ge -\mathcal{L}(p) + 1 \]
In particular, the rotation number satisfies $\rho(p) + \mathcal{L}(p) > 1$.
\end{proof}

\subsection{Main Construction} \label{subsec:main_construction} We conclude this section by proving Proposition \ref{prop:dyn_cvx_counterexample}. 

\begin{proof} (Proposition \ref{prop:dyn_cvx_counterexample}) We construct the small Ruelle invariant and large Ruelle invariant contact forms separately. The basic strategy in both cases is to construct the special Hamiltonian flow of \S \ref{subsec:special_hamiltonian_map} with a specific choice of parameters, apply the open book construction of Proposition \ref{prop:open_book} to acquire a contact form and verify the desired properties by computing the relevant invariants (e.g. period, index, Calabi invariant) of $\phi$. 

\vspace{3pt}

{\bf Small Ruelle Case.} We begin by choosing the parameters $n$, $U$, $\delta$ and $R$ from Setup \ref{set:special_hamiltonian}. Fix a large positive real number $\kappa$. Choose an integer $n > \kappa$ and a union $U$ of disks $D$ that each satisfy
\begin{equation*}
\pi-\frac{1}{\kappa} < \sum\limits_{D\subset U}\area(D) < \pi\qquad \area(D)<\frac{1}{\kappa} \qquad \diam(D)<\frac{1}{\kappa}
\end{equation*}
Choose $\delta > 0$ so that $\area (N(\partial U)) < \frac{1}{\kappa}$ and choose $R := -2+\frac{1}{\kappa}$. These parameters define a special Hamiltonian flow $\phi = \phi^G \circ \phi^H$. By direct calculation and Lemma \ref{lem:action_of_phi}, we know that
\[G \# H = \pi(1 + \frac{1}{n}) \cdot (1 - r^2) \text{ near }\partial \D \qquad\text{and}\qquad \sigma_\phi > 0\]
Therefore, we can apply Construction \ref{con:open_book} to $\phi$ to acquire a contact form $\alpha$ on $S^3$.

\vspace{3pt}

We now show that (a scaling of) $\alpha$ has the desired properties. First, by Proposition \ref{prop:open_book}(b) and Lemma \ref{lem:action_of_phi}, the volume of $(S^3,\alpha)$ is given by the formula
\[\Cal(\D,\phi) = \pi^2(1 + \frac{1}{n}) +R\sum_{D \subset U } \area(D)^2 + O(d(U)) + O(R\cdot \area(N(\partial U))) = \pi^2 + O(\kappa^{-1}) \]
Next, by Proposition \ref{prop:open_book}(c) and Lemma \ref{lem:rotation_of_phi}, the Ruelle invariant of $(S^3,\alpha)$ is given by the formula
\[\Ru(\D,\phi) + \pi = \pi(2 + \frac{1}{n}) + R \sum_{D \subset U} \area(D) + O(R\cdot \area(N(\partial U)) = O(\kappa^{-1})\]
Last, by Proposition \ref{prop:open_book}(d) the binding $b = \iota(\partial \D)$ in $S^3$ has action and rotation number given by
\[\mathcal{A}(b) = \pi \qquad \rho(b) = 1 + \frac{1}{1 + 1/n} > 1\]
Due to Proposition \ref{prop:open_book}(e) and Lemma \ref{lem:special_hamiltonian_periodic_points}, every periodic orbit of $(S^3,\alpha)$ other than $b$ satisfies
\[\mathcal{A}(\gamma) \ge \pi \qquad \rho(\gamma) > 1\]
In particular, $\alpha$ is a dynamically convex contact form. Finally, rescale $\alpha$ by $\on{vol}(S^3,\alpha)^{-1/2}$. Then for any $\epsilon > 0$, we may choose a $\kappa$ sufficiently large so that
\[
\on{vol}(S^3,\alpha) = 1 \qquad \on{Ru}(S^3,\alpha) = O(\kappa^{-1}) < \epsilon \qquad \on{sys}(S^3,\alpha) > \frac{\pi^2}{\pi^2 + O(\kappa^{-1})} > 1 - \epsilon \qquad 
\]
This is precisely the list of properties \eqref{eqn:dyn_cvx_counterexample_small_ruelle}, and so we have constructed the desired small Ruelle invariant contact form.

\vspace{3pt}

{\bf Large Ruelle Case.} Again, we choose parameters $n$, $U$, $\delta$ and $R$ from Setup \ref{set:special_hamiltonian}. Fix a large positive real number $\kappa$. Choose an integer $n > \kappa$ and a union $U$ of disks $D$ that each satisfy
\begin{equation*}
\pi-\frac{1}{\kappa} < \sum\limits_{D\subset U}\area(D) < \pi\qquad \area(D)<\frac{1}{\kappa^2} \qquad \diam(D)<\frac{1}{\kappa}
\end{equation*}
Choose $\delta>0$ such that $\area(N(\partial U))<\frac{1}{\kappa^2}$ and set $R=\kappa$. These parameters define a special Hamiltonian flow $\psi = \psi^G \circ \psi^H$. By direct calculation and Lemma \ref{lem:action_of_phi}, we know that
\[G \# H = \pi(1 + \frac{1}{n}) \cdot (1 - r^2) \text{ near }\partial \D \qquad\text{and}\qquad \sigma_\phi > 0\]
Therefore, we can apply Construction \ref{con:open_book} to $\psi$ to acquire a contact form $\beta$ on $S^3$. 

\vspace{3pt}

Now we show that (a scaling of) $\beta$ has all of the desired properties. First, by Proposition \ref{prop:open_book}(b) and Lemma \ref{lem:action_of_phi}, the volume of $(S^3,\beta)$ is given by the formula
\[\Cal(\D,\psi) = \pi^2(1 + \frac{1}{n}) +R\sum_{D \subset U } \area(D)^2 + O(d(U)) + O(R\cdot \area(N(\partial U))) = \pi^2 + O(\kappa^{-1})\]
Next, by Proposition \ref{prop:open_book}(c) and Lemma \ref{lem:rotation_of_phi}, the Ruelle invariant of $(S^3,\beta)$ is given by the formula
\[\Ru(\D,\phi) + \pi = \pi(2 + \frac{1}{n}) + R \sum_{D \subset U} \area(D) + O(R\cdot \area(N(\partial U)) = \pi \cdot \kappa + O(1)\]
Last, by Proposition \ref{prop:open_book}(d) the binding $b = \iota(\partial \D)$ in $S^3$ has action and rotation number given by
\[\mathcal{A}(b) = \pi \qquad \rho(b) = 1 + \frac{1}{1 + 1/n} > 1\]
Due to Proposition \ref{prop:open_book}(e) and Lemma \ref{lem:special_hamiltonian_periodic_points}, every periodic orbit of $(S^3,\beta)$ other than $b$ satisfies
\[\mathcal{A}(\gamma) \ge \pi \qquad \rho(\gamma) > 1\]
In particular, $\beta$ is a dynamically convex contact form. Finally, rescale $\beta$ by $\on{vol}(S^3,\beta)^{-1/2}$. Then for any $\epsilon > 0$, we may choose a $\kappa$ sufficiently large so that
\[
\on{vol}(S^3,\alpha) = 1 \qquad \on{Ru}(S^3,\alpha) = O(\kappa) > \epsilon^{-1} \qquad \on{sys}(S^3,\alpha) > \frac{\pi^2}{\pi^2 + O(\kappa^{-1})} > 1 - \epsilon \qquad 
\]
This is precisely the list of properties \eqref{eqn:dyn_cvx_counterexample_large_ruelle}, and so we have constructed the desired large Ruelle invariant contact form.\end{proof}

\bibliographystyle{hplain}
\bibliography{dyn_cvx_bib}

\begin{thebibliography}{10}

\bibitem{abhs2}
A.~Abbondandolo, B.~Bramham, U.~Hryniewicz, and P.~Salomão.
\newblock Systolic ratio, index of closed orbits and convexity for tight
  contact forms on the three-sphere.
\newblock {\em Compositio Mathematica}, 154(12):2643–2680, 2018.

\bibitem{abhs1}
Alberto Abbondandolo, Barney Bramham, Umberto~L. Hryniewicz, and Pedro A.~S.
  Salomão.
\newblock Sharp systolic inequalities for {R}eeb flows on the three-sphere.
\newblock {\em Inventiones mathematicae}, 211(2):687–778, 2018.

\bibitem{am2014}
M.~Abreu and L.~Macarini.
\newblock Dynamical convexity and elliptic periodic orbits for {R}eeb flows.
\newblock {\em Mathematische Annalen}, 369, 11 2014.

\bibitem{am2015}
M.~Abreu and L.~Macarini.
\newblock Multiplicity of periodic orbits for dynamically convex contact forms.
\newblock {\em Journal of Fixed Point Theory and Applications}, 19:175--204,
  2015.

\bibitem{bz1980}
J.~D. Burago and V.~A. Zalgaller.
\newblock {\em Geometric inequalities}.
\newblock Springer, 1988.

\bibitem{ch2020}
J.~Chaidez and M.~Hutchings.
\newblock Computing {R}eeb dynamics on 4d convex polytopes, 2020 (Submitted),
  arXiv:2008.10111.

\bibitem{chlrs2005}
Kai Cieliebak, Helmut~H. Hofer, Janko Latschev, and F.~Schlenk.
\newblock Quantitative symplectic geometry.
\newblock In {\em Recent Progress in Dynamics}, volume~54. MSRI Publications,
  2005.

\bibitem{cgip2003}
Gonzalo Contreras, Jean-Marc Gambaudo, Renato Iturriaga, and Gabriel Paternain.
\newblock The asymptotic {M}aslov index and its applications.
\newblock {\em Ergodic Theory and Dynamical Systems}, 23:1415 -- 1443, 10 2003.

\bibitem{fvk2018}
U.~Frauenfelder and O.~van Koert.
\newblock {\em The Restricted Three-Body Problem and Holomorphic Curves}.
\newblock Springer, 2018.

\bibitem{gl1955}
I.~M. Gel'fand and V.~B. Lidskii.
\newblock On the structure of the regions of stability of linear canonical
  systems of differential equations with periodic coefficients.
\newblock {\em Upekhi Mat. Nauk}, 10:3--40.

\bibitem{gg2016}
V.~Ginzburg and Başak~Z. G{\"u}rel.
\newblock {L}usternik–{S}chnirelmann theory and closed {R}eeb orbits.
\newblock {\em Mathematische Zeitschrift}, 295:515--582, 2016.

\bibitem{gm2020}
V.~Ginzburg and L.~Macarini.
\newblock Dynamical convexity and closed orbits on symmetric spheres, 2020,
  arXiv:1912.04882.

\bibitem{gh2018}
J.~Gutt and M.~Hutchings.
\newblock Symplectic capacities from positive $s^1$–equivariant symplectic
  homology.
\newblock {\em Algebr. Geom. Topol.}, 18(6):3537--3600, 2018.

\bibitem{ghr2020}
J.~Gutt, M.~Hutchings, and V.~B.~R. Ramos.
\newblock Examples around the strong {V}iterbo conjecture, 2020,
  arXiv:2003.10854.

\bibitem{hwz1998}
H.~Hofer, K.~Wysocki, and E.~Zehnder.
\newblock The dynamics on three-dimensional strictly convex energy surfaces.
\newblock {\em The Annals of Mathematics}, 148(1):197, 1998.

\bibitem{h2014}
U.~Hryniewicz.
\newblock Systems of global surfaces of section for dynamically convex {R}eeb
  flows on the 3-sphere.
\newblock {\em J. Symplectic Geom.}, 12(4):791--862, 12 2014.

\bibitem{h2019}
M.~Hutchings.
\newblock {ECH} capacities and the {R}uelle invariant, 2019, arxiv:1910.08260.

\bibitem{hn2014}
Michael Hutchings and Jo~Nelson.
\newblock Cylindrical contact homology for dynamically convex contact forms in
  three dimensions.
\newblock {\em Journal of Symplectic Geometry}, 14, 07 2014.

\bibitem{j1948}
Fritz John.
\newblock Extremum problems with inequalities as subsidiary conditions.
\newblock 2014.

\bibitem{k1973}
J.~F.~C. Kingman.
\newblock Subadditive ergodic theory.
\newblock {\em The Annals of Probability}, 1(6):883–899, 1973.

\bibitem{r1979}
P.~Rabinowitz.
\newblock Periodic solutions of a {H}amiltonian system on a prescribed energy
  surface.
\newblock {\em Journal of Differential Equations}, 33(3):336 -- 352, 1979.

\bibitem{rs2006}
Clodoaldo Ragazzo and Pedro Salomão.
\newblock The {C}onley–{Z}ehnder index and the saddle-center equilibrium.
\newblock {\em Journal of Differential Equations}, 220:259--278, 01 2006.

\bibitem{r1985}
D.~Ruelle.
\newblock Rotation numbers for diffeomorphisms and flows.
\newblock {\em Annales De L Institut Henri Poincare-physique Theorique},
  42:109--115, 1985.

\bibitem{sz1992}
Dietmar~A. Salamon and Eduard Zehnder.
\newblock {M}orse theory for periodic solutions of hamiltonian systems and the
  {M}aslov index.
\newblock {\em Communications on Pure and Applied Mathematics}, 45:1303--1360,
  1992.

\bibitem{ss2010}
G.~B. Simon and Dietmar~A. Salamon.
\newblock Homogeneous quasimorphisms on the symplectic linear group.
\newblock {\em Israel Journal of Mathematics}, 175:221--224, 2007.

\bibitem{v2000}
C.~Viterbo.
\newblock Metric and isoperimetric problems in symplectic geometry.
\newblock {\em Journal of the American Mathematical Society}, 13(2):411–431,
  2000.

\bibitem{w1979}
A.~Weinstein.
\newblock On the hypotheses of {R}abinowitz periodic orbit theorems.
\newblock {\em Journal of Differential Equations}, 33(3):353–358, 1979.

\bibitem{z2019a}
Z.~Zhou.
\newblock Symplectic fillings of asymptotically dynamically convex manifolds i,
  2019, arxiv:1907.09510.

\bibitem{z2019b}
Z.~Zhou.
\newblock Symplectic fillings of asymptotically dynamically convex manifolds
  ii-dilations, 2019, arxiv:1910.06132.

\end{thebibliography}

\end{document}